\documentclass[9pt]{article}
\usepackage[latin1]{inputenc}
\usepackage{amscd}
\usepackage{amsfonts}
\usepackage{amsgen}
\usepackage{amsmath}
\usepackage{mathabx}
\usepackage{amssymb}
\usepackage{amstext}
\usepackage{mathtools}
\usepackage{amsthm}
\usepackage{graphicx}
\usepackage{tikz-cd}
\usepackage{indentfirst}
\usepackage{latexsym}
\usepackage{makeidx}
\usepackage{mathrsfs}
\usepackage{stackrel}
\usepackage{epsfig}
\usepackage{wrapfig}
\usepackage[all,knot,arc,import,poly]{xy}

\setlength{\baselineskip}{1.2\baselineskip}
\setlength{\textheight}{670pt}
\setlength{\textwidth}{490pt} %valor antigo era 450
\setlength{\parindent}{1\parindent}
\addtolength{\topmargin}{-60pt} %valor antigo era -36pt
\addtolength{\oddsidemargin}{-70pt}
\addtolength{\evensidemargin}{-70pt} \setlength{\headsep}{20pt}
\setlength{\parskip}{1.3ex} %valor antigo era zero

\newtheorem{theorem}{Theorem}

\newtheorem{definition}[theorem]{Definition}
\newtheorem{example}[theorem]{Example}

\newtheorem{lemma}[theorem]{Lemma}

%\newenvironment{proof}[1][Proof]{\textbf{#1.} }{\ \rule{0.5em}{0.5em}}

%%%%% DEFINIÇÕES %%%%%
\newtheorem{Df}{Definition}[section]
\newtheorem{Teo}[Df]{Theorem}
\newtheorem{Prop}[Df]{Proposition}
\newtheorem{Lem}[Df]{Lemma}
\newtheorem{Ex}[Df]{Example}
\newtheorem{Obs}[Df]{Remark}

\newtheorem{Que}[Df]{Question}

\newtheorem{fact}[Df]{Fact}
%
%% Sugestão de macros: por Renivaldo e João Paulo %%
\newcommand{\n}{\noindent}
\newcommand{\Dem}{\n{\bf Proof:\;\;}}

\newcommand{\bc}{\begin{center}}
\newcommand{\ec}{\end{center}}

%%%%%%Letras
\newcommand{\N}{\mathbb{N}}

\newcommand{\vtres}{\vspace*{0.3cm}}

% 8. CALIGRAPHIC LETTERS

\newcommand{\cL}{\mbox{$\cal L$}}

%%%%CatLog%%%%

\newcommand{\Lf}{\cL_f}

%%%Equation%%%
\def \beq { \begin{equation} }
\def \eeq { \end{equation} }

%%%%%Funtion%%%%%

\def \rest {\restriction}

%%%%Comandos texto Mat Hugo%%%%
\newcommand{\baf}{\begin{Afi}\nf{\sl . }}
\newcommand{\eaf}{\end{Afi}}

\newcommand{\bdf}{\begin{Df}\nf{\bf .}}
\newcommand{\edf}{\end{Df}}
\newcommand{\bte}{\begin{Th}\nf{\bf .}}
\newcommand{\ete}{\end{Th}}  %\eth ja esta definido na table 8.20 (amssymb)
\newcommand{\bco}{\begin{Co}\nf{\bf .}}
\newcommand{\eco}{\end{Co}}
\newcommand{\ble}{\begin{Le}\nf{\bf .}}
\newcommand{\ele}{\end{Le}}
\newcommand{\bpr}{\begin{Pro}\nf{\bf .}}
\newcommand{\epr}{\end{Pro}}
\newcommand{\bex}{\begin{Exa}\nf{\bf .} \rm}
\newcommand{\eex}{\end{Exa}}
\newcommand{\bre}{\begin{Rem}\nf{\bf .} \rm}
\newcommand{\ere}{\end{Rem}}
\newcommand{\bfa}{\begin{Fa}\nf{\bf .} \sl}
\newcommand{\efa}{\end{Fa}}
\newcommand{\bqt}{\begin{Que}\nf{\bf .}}
\newcommand{\eqt}{\end{Que}}

\newcommand{\bct}{\begin{Ct}\nf{\bf .} \rm}
\newcommand{\ect}{\end{Ct}}
\newcommand{\bxa}{\begin{Exa}\nf{\bf .} \rm}
\newcommand{\exa}{\end{Exa}}
%%Espaço%%
%\newcommand{\vcinco}{\vspace*{0.5cm}}

\newcommand{\sub}{\subseteq}

\newcommand{\pres}{\mathbb{P}res}
\newcommand{\sig}{\mathbb{S}ig}
\newcommand{\cat}{\mathbb{C}at}
\newcommand{\CAT}{\mathbf{CAT}}
\newcommand{\Room}{\mathbb{R}oom}

\newcommand{\set}{\mathbb{S}et}

   % formulas mentioning signature and set of variables

\newcommand{\inm}{\mathbf{Ins}_{mor}}
\newcommand{\inc}{\mathbf{Ins}_{co}}
\newcommand{\pim}{\pi\mathbf{Ins}_{mor}}
\newcommand{\pic}{\pi\mathbf{Ins}_{co}}

\newcommand{\vertin}{\rotatebox[origin=c]{90}{$\in$}}

\begin{document}
\title{Connecting abstract logics and adjunctions in the theory of ($\pi$-)institutions: some theoretical remarks and applications}

%between institutions and $\pi$-institutions {\color{orange}{adjunctions in ($\pi$)-innstitutions theory}}
%{\color{blue}{uma sugestao de titulo: 'Adjunctions in the theory of ($\pi$-)institutions: some theoretical remarks and applications'}}}

\author{
Gabriel Bittencourt Rios \thanks{Institute of Mathematics and Statistics, University of S\~ao Paulo, Brazil, gabriel.bit@usp.br},
Daniel de Almeida Souza \thanks{Institute of Mathematics and Statistics, University of S\~ao Paulo, Brazil, daniel.almeida.souza@usp.br},\\
Darllan Concei\c c\~ao\ Pinto \thanks{Institute of Mathematics, Federal University of Bahia, Brazil, darllan\underline{ }math@hotmail.com}, 
Hugo Luiz Mariano \thanks{Institute of Mathematics and Statistics, University of S\~ao Paulo, Brazil, hugomar@ime.usp.br}
}

\overfullrule=0pt

%\author{
%Gabriel Bittencourt Rios \thanks{Institute of Mathematics and Statistics, University of S\~ao Paulo, Brazil, gabriel.bit@usp.br},
%Daniel de Almeida Souza \thanks{Institute of Mathematics and Statistics, University of S\~ao Paulo, Brazil, daniel.almeida.souza@usp.br},
%Darllan Concei\c c\~ao\ Pinto \thanks{Institute of Mathematics, Federal University of Bahia, Brazil, darllan@ufba.br\\
%This author was funded by FAPESB, Grant APP0072/2016.}, 
%Hugo Luiz Mariano \thanks{Institute of Mathematics and Statistics, University of S\~ao Paulo, Brazil, hugomar@ime.usp.br}
%}

\date{}
\maketitle

\begin{abstract}
In the present work, a natural sequel to \cite{MaPi1}, we further discuss the existence of adjunctions between categories of institutions and of $\pi$-institutions. This is done at both a foundational and an applied level. Firstly, we reformulate and conceptually clarify such adjunctions in terms of the $2$-categorical data involved in the construction of categories of institution-like structures. More precisely, we remark that the process used for passing from rooms to institutions (\cite{Diac2}) can be extended, due to its $2$-functoriality, to more general room-like and institution-like structures in such a way that the aforementioned adjunctions are all seen to arise from simpler adjunctions at the room-like level. Secondly, and mostly independently, we provide some applications of such adjunctions to abstract logics, mainly to the setting of propositional logics and filter pairs (\cite{AMP1}); we also generalize the process of skolemization, a classical device from predicate logic, to the institutional setting.

%\daniel{Dei uma mexida grande no abstract, vejam o que acham. Como nao mexi muito nas secoes finais, nao coloquei detalhes sobre elas, entao alterem ali em cima conforme preferirem}

%\hugo{dizer que o objetivo e duplo: parte 2,3 adjuncao (continuando o aspecto categorail de MaPi1); parete 3,4 logica}
%\vspace{0.3cm}

% {\hugo{

% Daniel - azul

% Darllan - verde

% Gabriel -vermelho

% Hugo -laranja}}

% \vspace{0.3cm}

{\bf Keywords:} ($\pi$-)institutions, abstract logics, adjunctions

\end{abstract}

\section*{Introduction}

%{\color{red}{{\bf 

%-adunjucoes (com morpfismos r co-morfismos): Inst, Pi-inst, Cat, Diag

%- categoria concreta com objeto terminal

%- pi-instituicoes de logicas e pares filtro (borrowing Craig?)

%- pares fitro com multialgebras e adjuncao algebra <--> multialgebra

%- ...}}

The concept of \emph{institution} was introduced by J. A. Goguen and R. M. Burstall (see \cite{GB}) in order to present a unified mathematical formalism for the notion of a formal logical system, i.e. it provides a \emph{``...categorical abstract model theory which formalizes the intuitive notion of logical system, including syntax, semantic, and satisfaction relation between them...''} (\cite{Diac2}). This means that it encompasses the abstract concept of universal model theory for a logic: it contains a satisfaction relation between models and sentences that is ``stable under change of notation''. The are several natural examples of institutions, and a systematic study of abstract model theory based on the general notion of institution is presented in Diaconescu's book \cite{Diac2}.

A proof-theoretical variation of the notion  of institution, the concept of \emph{$\pi$-institution}, was introduced by Fiadeiro and Sernadas in  \cite{FS}: it formalizes the notion of a deductive system and  \emph{``...replace the notion of model and satisfaction by a primitive consequence operator (\`a la Tarski)''}. Categories of propositional logics endowed with natural notions of translation morphisms provide examples of $\pi$-institutions. Voutsadakis has developed an intensive study of abstract algebraic logic based on the concept of $\pi$-institution, see for instance \cite{Vou}.

Certain relations between institutions and $\pi$-institutions were established in \cite{FS} and \cite{Vou}. On the other hand, it seems that the explicit functorial connections between the category of institutions (with comorphisms) and that of $\pi$-institutions (with comorphisms) first appeared in \cite{MaPi1}: indeed, the category of $\pi$-institutions is isomorphic to a full coreflective subcategory of the category of institutions. In the present (ongoing) work, we expand the study initiated in \cite{MaPi1} by establishing new adjunctions concerning categories of institution-like structures and sketching new connections between these and abstract logics. Thus the goal of the article is twofold: firstly, a categorical analysis in the setting of the abstract theory of models (respectively, theory of proof) given by institution theory (respectively, $\pi$-institution theory); secondly, applications to presentations of propositional logics (abstract logics, filter pairs) and abstract predicate logic devices (skolemization).

{\bf Overview of the paper:}

In {\bf Section 1} we recall,  for the reader's convenience, the definitions of institution and of $\pi$-institution, as well as their respective notions of (co)morphism. In {\bf Section 2} we expand the work in \cite{MaPi1} by presenting new adjunctions involving categories of categories, diagrams, institutions, and $\pi$-institutions. {\bf Section 3} is devoted to extending the construction of the category of rooms $-$ as presented in \cite{Diac2} $-$ in a way that applies to more general categories of institution-like structures. This is done by applying classical $2$-categorical machinery (such as the $2$-Yoneda embedding and the Grothendieck construction) and, although being relatively straightforward from a technical point of view, its $2$-functoriality allows us to provide a crucial conceptual simplification of the aforementioned adjunctions between categories of institution-like structures: they are seen to arise as images (under a $2$-functor of \emph{institutional realization}) of adjunctions between their generating categories of room-like structures.
%\daniel{eu mexi no overview ate aqui, diria que daqui para cima esta possivelmente pronto}
% {\color{orange}{
% dizer que o objetivo e duplo: parte 2,3 adjuncao (continuando o aspecto categorail de MaPi1); parete 4,5 logica (4 eh mais proposicional e 5 eh mais para de predicados).}}
In {\bf Section 4}, we present some institutions and $\pi$-institutions of abstract propositional logics, not only the ones  obtained by the former adjunctions, useful for establishing an abstract Glivenko's theorem for algebraizable logics regardless of their signatures associated  (\cite{MaPi3}). We have also defined a  institution for each  filter pair -general and finitary version  (see \cite{AMP1})-  in fact, we provide a functor from the category of filter pairs to the category of institutions that can be restricted to a functor from the category of propositional logics to the category of institutions and, moreover, that  can be extended to a functor from the ``multialgebraic'' setting (logics and filter pairs),  useful to deal with complex logics, as Logics of Formal Inconsistency (LFIs) (\cite{CCM}), thought    non-deterministic semantics of matrices (\cite{AZ}). {\bf Section 5} introduces a new institutional device: skolemization; which is applied to get, by borrowing from FOL, a form of downward L\"owenheim-Skolem for the setting of multialgebras. {\bf Section 6} finishes the paper presenting some remarks and perspectives of future developments.

\section{Preliminaries: categories of institutions and $\pi$-institutions}

In this first section we recall, for the reader's convenience, the definition  of institution and $\pi$-institution with their respective notions of morphisms and comorphisms, consequently defining their categories.  We also add a subsection recalling the main results in \cite{MaPi1}: the adjunction between the categories of institutions and $\pi$-institutions endowed with its {\em comorphisms}.

\subsection{Categories of institutions}
%$\blacktriangleright$ {\em Institutions}
\begin{Df}
An institution $I=(\mathbb{S}ig,Sen,Mod,\models)$ consists of

\[\xymatrix{
&\mathbb{S}ig\ar[ld]_{Mod}\ar[rd]^{Sen}&\\
(\mathbb{C}at)^{op}&\models&\mathbb{S}et
}\]

\begin{enumerate}
\item[1.]a category $\mathbb{S}ig$, whose the objects are called {\em signature},
\item[2.]a functor $Sen:\sig\to\set$, for each signature a set whose elements are called {\em sentence} over the signature
\item[3.]a functor $Mod:(\sig)^{op}\to\cat$, for each signature a category whose the objects are called {\em model},
\item[4.]a relation $\models_{\Sigma}\subseteq|Mod(\Sigma)|\times Sen(\Sigma)$ for each $\Sigma\in|\sig|$, called $\Sigma$-{\em satisfaction}, such that for each morphism $h:\Sigma\to\Sigma'$, the compatibility  condition
    \[M'\models_{\Sigma'}Sen(h)(\phi)\ if\ and\ only\ if\ Mod(h)(M')\models_{\Sigma}\phi\]
  holds for each $M'\in|Mod(\Sigma')|$ and $\phi\in Sen(\Sigma)$
\end{enumerate}
\end{Df}

\begin{Ex} \label{inst-examples}
Let $Lang$ denote the  category of  languages $L = ((F_n)_{n \in \N},(R_n)_{n \in \N})$, -- where  $F_n$ is a set of symbols of $n$-ary function symbols and $R_n$ is a   set of symbols of $n$-ary relation symbols, $n \geq 0$ -- and language morphisms\footnote{That can be chosen ``strict" (i.e., $F_n\mapsto F_n'$, $R_n \mapsto R'_n$) or chosen be ``flexible" (i.e., $F_n\mapsto \{n-ary-terms(L')\}$, $R_n \mapsto \{n-ary-atomic-formulas(L')\}$).}. For each pair of cardinals $\aleph_0 \leq \kappa, \lambda \leq \infty$, the category $Lang$  endowed with the usual notion of  $L_{\kappa,\lambda}$-sentences (=  $L_{\kappa,\lambda}$-formulas with no free variable), with the usual association of category of structures and  with the usual (tarskian) notion of satisfaction, gives rise to an institution $I({\kappa,\lambda})$.

\end{Ex}

\begin{Df} Let $I$ and $I'$ be institutions.
\begin{enumerate}
    \item[(a)] An institution {\bf morphism} $h = (\Phi,\alpha,\beta) : I \to I'$ consists of:
    \[\xymatrix{
    &\mathbb{S}ig\ar@/^1pc/[ld]_{\nwarrow}\ar@/_/[ld]_{Sen}\ar@/^/[rd]^{(Mod)^{op}}\ar@/_1pc/^{\swarrow}[rd]\ar[d]_>{\Phi}&\\
    \set&\sig'\ar[l]^{Sen'}\ar[r]_{{Mod'}^{op}}&\cat^{op}
    }\]

    \begin{enumerate}
        \item[$\bullet$]a functor $\Phi:\sig\to\sig'$
        \item[$\bullet$]a natural transformation $\alpha:Sen'\circ \Phi\Rightarrow Sen$
        \item[$\bullet$]a natural transformation $\beta:Mod\Rightarrow Mod'\circ\Phi^{op}$\\

            Such that the following {\em compatibility condition} holds:
            \[m\models_{\Sigma}\alpha_{\Sigma}(\varphi')\ \ if\!f\ \ \beta_{\Sigma}(m)\models'_{\Phi(\Sigma)}\varphi'\]
            For any $\Sigma\in\sig$, any $\Sigma$-model $m$ and any $\Phi(\Sigma)$-sentence $\varphi'$.
    \end{enumerate}
    \item[(b)] A triple $f=\langle \phi,\alpha,\beta\rangle:I\to I'$ is a {\bf comorphism} between the given institutions if the following conditions hold:
    \begin{enumerate}
        \item[$\bullet$]$\phi:\sig\to\sig'$ is a functor.
        \item[$\bullet$]natural transformations $\alpha:Sen\Rightarrow Sen'\circ\phi$ and $\beta:Mod'\circ\phi^{op}\Rightarrow Mod$ satisfying:
        \[m'\models'_{\phi(\Sigma)}\alpha_{\Sigma}(\varphi)\ iff\ \beta_{\Sigma}(m')\models_{\Sigma}\varphi\]
        For any $\Sigma\in\sig$,  $m'\in Mod'(\phi(\Sigma))$ and $\varphi\in Sen(\Sigma)$.
    \end{enumerate}
\end{enumerate}
\end{Df}

Given comorphisms $f:I\to I'$ and $f':I\to I''$, notice that $f'\bullet f \coloneqq  \langle\phi'\circ\phi, \alpha'\bullet\alpha,\beta'\bullet\beta\rangle$ defines a comorphism $f'\bullet f : I \to I''$, where $(\alpha'\bullet\alpha)_{\Sigma}=\alpha'_{\phi(\Sigma)}\circ\alpha_{\Sigma}$ and $(\beta'\bullet\beta)_{\Sigma}=\beta_{\Sigma}\circ\beta'_{\phi(\Sigma)}$. Let $Id_{I} \coloneqq  \langle Id_{\sig},Id,Id\rangle:I\to I$. It is straightforward to check that these data determines a category\footnote{As usual in category theory, the set theoretical size issues on such global constructions of categories can be addressed by the use of at least two Grothendieck universes.}. We will denote by $\inc$ this category of institution comorphisms. Of course, using analagous methods one can also define $\inm$---the category of institution morphisms.

\begin{Ex} \label{inst-morph-examples} 
Given two pairs of cardinals  $(\kappa_i, \lambda_i)$, with  $\aleph_0 \leq \kappa_i, \lambda_i \leq \infty$, $i =0,1$, such that $\kappa_0\leq \kappa_1$ and $\lambda_0  \leq \lambda_1$, then it is induced a morphism and a comorphism of institutions
$(\Phi, \alpha, \beta): I(\kappa_0, \lambda_0) \to I(\kappa_1, \lambda_1)$, given by the {\em same data}: $\sig_{0} = Lang = \sig_{1}$, $Mod_0 = Mod_1 : (Lang)^{op} \to \cat$, $Sen_{i}=L_{\kappa_{i},\lambda_{i}},\ i=0,1$, $\Phi = Id_{Lang} : \sig_0 \to \sig_1$, $\beta := Id : Mod_{i} \Rightarrow Mod_{1-i}$,   $\alpha := inclusion : Sen_{0} \Rightarrow Sen_{1}$.

\end{Ex}

\subsection{Categories of $\pi$-institutions}

\begin{Df}
A $\pi$-institution $J=\langle\sig,Sen,\{C_{\Sigma}\}_{\Sigma\in|\sig|}\rangle$ is a triple with its first two components exactly the same as the first two components of an institution and, for every $\Sigma\in|\sig|$, a closure operator $C_{\Sigma}:\mathcal{P}(Sen(\Sigma))\to\mathcal{P}(Sen(\Sigma))$, such that, for every $f:\Sigma_{1}\to\Sigma_{2}\in Mor(\sig)$, the following holds:
    \[Sen(f)(C_{\Sigma_{1}}(\Gamma))\subseteq C_{\Sigma_{2}}(Sen(f)(\Gamma)),\ for\ all\ \Gamma\subseteq Sen(\Sigma_{1}).\]
\end{Df}

\begin{Df} Let $J$ and $J'$ be $\pi$-institutions.
\begin{itemize}

\item[(a)] A \textbf{morphism} between $J$ and $J'$ is a pair $\langle \Phi, \alpha \rangle$ such that:
    \begin{itemize}
        \item[$\bullet$]$\Phi:\sig\to\sig'$ is a functor
        \item[$\bullet$]$\alpha:Sen'\Phi\Rightarrow Sen$ is a natural transformation\\
        
        And, for all $\ \Gamma\cup\{\varphi\}\subseteq Sen'(\Phi\Sigma)$, the following holds:
        \[\varphi\in C_{\Phi\Sigma}(\Gamma) \Rightarrow \alpha_{\Sigma}(\varphi)\in C_{\Sigma}(\alpha_{\Sigma}(\Gamma)) \]
    \end{itemize}
\item[(b)]$\langle\Phi,\alpha\rangle:J\to J'$ is a \textbf{comorphism} between $\pi$-institution if:
    \begin{itemize}
        \item[$\bullet$]$\Phi:\sig\to\sig'$ is a functor
        \item[$\bullet$]$\alpha:Sen\Rightarrow Sen'\Phi$ is a natural transformation\\
        
        Such that, for all $\ \Gamma\cup\{\varphi\}\subseteq Sen(\Sigma)$, we have:
        \[\varphi\in C_{\Sigma}(\Gamma)\Rightarrow\alpha_{\Sigma}(\varphi)\in C_{\Phi\Sigma}(\alpha_{\Sigma}(\Gamma))\]
    \end{itemize}
\end{itemize}
\end{Df}

Given $\pi$-institution morphisms (respec. comorphisms) $\langle F, \alpha\rangle:J\to J'$ and $\langle G, \beta\rangle:J'\to J''$, $g\cdot f$ is defined as $\langle GF, \alpha \cdot \beta F  \rangle$ (respec. $\langle GF, \beta F \cdot \alpha \rangle$), routine calculations show the composition is well defined. The identity morphism and comorphism are both given by $\langle 1_{\sig}, 1_{Sen} \rangle $. These remarks lead us to define $\pi\mathbf{Ins}_{mor}$ and $\pi\mathbf{Ins}_{co}$ the categories of, respectively, institution morphisms and comorphisms.

\begin{Obs}
It is easy to see that $\pi$-institution can be equivalently described by a triple 
 $\langle\sig,Sen,\{\vdash_{\Sigma}\}_{\Sigma\in|\sig|}\rangle$ where the first two components are simply the ones used for $\pi$-institutions and the third component is a family, indexed by $\Sigma\in|\sig|$, of {\em tarskian consequence relations} $\vdash_{\Sigma}  \ \subseteq \ \mathcal{P}(Sen(\Sigma))\times Sen(\Sigma)$ such that for every arrow $f:\Sigma_{1}\to\Sigma_{2}$ in $\sig$ the induced function $Sen(f) : Sen(\Sigma_{1})\to Sen(\Sigma_{2})\in Mor(\set)$ is a {\em logical translation}, i.e. for each $ \Gamma \cup \{\varphi\} \subseteq Sen(\Sigma_1)$
    $$ \Gamma \vdash_{\Sigma_1} \varphi \ \Rightarrow \ Sen(f)[\Gamma] \vdash_{\Sigma_2} Sen(f)(\varphi)$$
    
%{\color{red}{acho que aqui vale a pena introduzir a versao equivalente (categorias isomorfas) de pi-institucioes usando $\vdash_\Sigma$ no lugar de $C_\Sigma$}}

\end{Obs}

\subsection{An adjunction between $\inc$\ and $\pic$}\label{adj inc pic}
%\textcolor{red}{ Como essa se\c{c}\~{a}o \'e apenas para conveni\^{e}ncia do leitor talvez seria bom dar uma enxugada nela e remover os aspectos t\'ecnicos como provar boa defini\c{c}\~ao}\\
%Gabriel: \textcolor{red}{Dei uma exugada nessa se\c{c}\~{a}o. Segui as indica\c{c}\~{o}es que o Hugo tinha deixado aqui e comentei elas p/ melhorar a leitura.}\\
For the reader's convenience, we recall here the adjunction  between $\inc$ and $\pic$ established in \cite{MaPi1}; thus  all the proofs will be omitted. 

Let $I=\langle\sig,Sen,Mod,\models\rangle$ be an institution. Given $\Sigma\in|\sig|$, consider
\[\Gamma^{\star}=\{m\in Mod(\Sigma);\ m\models_{\Sigma} \varphi\ for\ all\ \varphi \in\Gamma\}\ \text{ and}\]
\[M^{\star}=\{\varphi\in Sen(\Sigma);\ m\models_{\Sigma}\varphi\ for\ all\ m\in M\}\]

for any $\Gamma\subseteq Sen(\Sigma)$ and $M\subseteq Mod(\Sigma)$. Notoriously, these mappings establish a Galois connection. Thus $C^{I}_{\Sigma}(\Gamma) \coloneqq\Gamma^{\star\star}$ defines a closure operator for any $\Sigma\in|\sig|$ (\cite{Vou}). We can now define the first part of our adjunction:

%The following lemma describes the behavior of these Galois connections through institutions comorphisms.

% \begin{Lem}\label{lema1}
% Let $f=\langle\phi,\alpha,\beta\rangle:I\to I'$ an arrow in $\inc$. Then given $\Gamma\subseteq Sen(\Sigma)$ and $M\subseteq|Mod(\phi(\Sigma))|$, the following conditions holds:
% \begin{itemize}
% \item[1)]$\beta_{\Sigma}[(\alpha_\Sigma[\Gamma])^{\star}]\subseteq \Gamma^{\star}$
% \item[2)]$\alpha_{\Sigma}[(\beta_{\Sigma}[M])^{\star}]\subseteq M^{\star}$
% \end{itemize}
% \end{Lem}

%\Dem \emph{1)} Let $m\in\beta_{\Sigma}[(\alpha_\Sigma[\Gamma])^{\star}]$. So there is $m'\in\alpha_\Sigma[{\Gamma}]^{\star}$ such that $\beta_{\Sigma}(m')=m$. As $m'\in\alpha_{\Sigma}[\Gamma]^{\star}$, hence $m'\models'_{\phi(\Sigma)}\alpha_{\Sigma}[\Gamma]\ \Leftrightarrow\ \beta_{\Sigma}(m')\models_{\Sigma}\Gamma\ \Leftrightarrow\ m\models_{\Sigma}\Gamma$. Then $m\in\Gamma^{\star}$.

%\emph{2)} Let $\varphi\in\alpha_{\Sigma}[(\beta_{\Sigma}[M])^{\star}]$. So there is $\psi\in\beta_{\Sigma}[M]^{\star}$such that $\alpha_{\Sigma}(\psi)=\varphi$. Since $\psi\in(\beta_{\Sigma}[M])^{\star}$, hence $\beta_{\Sigma}[m]\models_{\Sigma}\psi\ \Leftrightarrow\ m\models_{\phi(\Sigma)}\alpha_{\Sigma}(\psi)\ \Leftrightarrow\ m\models_{\phi(\Sigma)}\varphi$ for any $m\in M$. Therefore $\varphi\in M^{\star}$.
%\qed
%\vcinco

% Define the following application:

    \[\begin{tikzcd}
\mathbf{Ins}_{co} 	\arrow[r, "F"]                                                      & \pi\mathbf{Ins}_{co}\\
I\arrow[d, "{\langle \phi,\alpha,\beta\rangle}"'{name=A}] \arrow[r, maps to] & {\langle \sig^{I},Sen^{I},\{C^{I}_{\Sigma}\}_{\Sigma\in|\sig|}\rangle} \arrow[d, "{\langle \phi,\alpha\rangle}"{name=B}] \\
J \arrow[r, maps to]                                    &	{\langle \sig^{J},Sen^{J},\{C^{J}_{\Sigma}\}_{\Sigma\in|\sig|}\rangle}                             \arrow[from=A,to=B,maps to, shorten <= 2.2mm, shorten >= 20mm]
\end{tikzcd}\]

% {\hugo{acho que aqui so precisaria dar a definiaco do funtor nos objetos e flechas e afirmar que da funtor.Isto enxugaria pos paragrafos abixo.}}\\
For the other side of the adjunction consider the application:

    \[\begin{tikzcd}
\pi\mathbf{Ins}_{co}\arrow[r, "G"]                                                      & \mathbf{Ins}_{co} \\
J\arrow[d, "{\langle \phi,\alpha\rangle}"'{name=A}] \arrow[r, maps to] & \langle\sig^{J},Sen^{J},Mod^{J},\models^{J}\rangle \arrow[d, "{\langle \phi,\alpha,\alpha^{-1}\rangle}"{name=B}] \\
J' \arrow[r, maps to]                                    &	\langle\sig^{J'},Sen^{J'},Mod^{J'},\models^{J'}\rangle                           \arrow[from=A,to=B,maps to, shorten <= 2.7mm, shorten >= 20mm]
\end{tikzcd}\]

Where:
\begin{itemize}
    \item $Mod^{J}$ is taken as:
    \[\begin{tikzcd}
\sig^{op}	\arrow[r, "Mod^{J}"]                                                      & \cat\\
\Sigma \arrow[d, "f"'{name=A}] \arrow[r, maps to] &\{C_{\Sigma}(\Gamma):\ \Gamma\subseteq Sen(\Sigma)\} \\
\Sigma' \arrow[r, maps to]                                    &	\{C_{\Sigma'}(\Gamma):\ \Gamma\subseteq Sen(\Sigma')\} \arrow[u,swap,"Sen(f)^{-1}"{name=B}]                           \arrow[from=A,to=B,maps to, shorten <= 3mm, shorten >= 20mm]
\end{tikzcd}\]
With $Mod^{J}(\Sigma)$ being viewed as a ``co-discrete category''\footnote{I.e., a class of objects  $C$ endowed with the trivial groupoid structure of all ordered pairs, $C \times C$.}.
    \item  For each $\Sigma$ we let $\models^{J}_\Sigma\subseteq|Mod(\Sigma)|\times Sen(\Sigma)$ as the relation:
\[m\models^{J}_{\Sigma}\varphi\quad if\hspace{-0.1cm}f \quad\varphi\in m\]
For any $m\in Mod(\Sigma)$ and $\varphi\in Sen(\Sigma)$
\end{itemize}

\begin{Teo}
The functors $F : \inc \rightarrow \pic$ and $G : \pic\rightarrow \inc$ defined above establish an adjunction $G \dashv F$ between the categories $\inc$ and $\pic$. Moreover, $F \circ G = Id_{\pic}$ and the unity of this adjunction, the natural transformation $\eta : Id_{\pic} \rightarrow F\circ G$, is the identity. Thus the category ${\pic}$ can be seen to be a full coreflective subcategory of $\inc$.
\end{Teo}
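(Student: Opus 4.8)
The plan is to deduce everything from the universal property of the (soon-to-be) unit, taking advantage of the fact that the models of $G(J)$ are exactly the $C_\Sigma$-closed theories of $J$ and that $\models^J$ is mere membership. First I would check $F \circ G = Id_{\pic}$. On objects, writing $J = \langle \sig, Sen, \{C_\Sigma\}\rangle$, the $\Sigma$-closure of $F(G(J))$ is the double-star operator $\Gamma \mapsto \Gamma^{\star\star}$ computed in $G(J)$. Since $\Gamma^{\star}$ is the set of closed theories $m$ with $\Gamma \subseteq m$ and $\models^J$ is membership, one gets $\Gamma^{\star\star} = \bigcap\{\, m \in Mod^J(\Sigma) : \Gamma \subseteq m \,\}$; because $C_\Sigma$ is a closure operator, $C_\Sigma(\Gamma)$ is itself such a theory and every closed $m \supseteq \Gamma$ satisfies $m = C_\Sigma(m) \supseteq C_\Sigma(\Gamma)$, so this intersection is exactly $C_\Sigma(\Gamma)$. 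As the signature and sentence components are untouched, $F(G(J)) = J$; and on comorphisms $F\langle \phi, \alpha, \alpha^{-1}\rangle = \langle \phi, \alpha\rangle$ by definition of $F$. Hence $F \circ G = Id_{\pic}$, so each component of the unit $\eta_J : J \to F(G(J)) = J$ can (and must) be taken to be $\langle 1_{\sig}, 1_{Sen}\rangle$, with naturality immediate.

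To obtain $G \dashv F$ it then suffices to prove that $\eta_J = 1_J$ is a universal arrow from $J$ to $F$: for every institution $I$ and every $\pi$-comorphism $h = \langle \phi, \alpha\rangle : J \to F(I)$, there is a unique institution comorphism $g : G(J) \to I$ with $F(g) = h$. The equation forces $g = \langle \phi, \alpha, \beta\rangle$, so the only datum to produce is the model component $\beta : Mod^I \circ \phi^{op} \Rightarrow Mod^J$. Since $\models^J$ is membership, the comorphism compatibility condition for $g$ reads $\varphi \in \beta_\Sigma(m') \iff m' \models^I_{\phi\Sigma} \alpha_\Sigma(\varphi)$, which pins down
\[ \beta_\Sigma(m') = \{\, \varphi \in Sen(\Sigma) : m' \models^I_{\phi\Sigma} \alpha_\Sigma(\varphi) \,\} = \alpha_\Sigma^{-1}\big(\{m'\}^{\star}\big) \]
uniquely; this settles uniqueness.

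For existence I must check that this formula yields a genuine comorphism. The set $\{m'\}^{\star}$ of sentences satisfied by $m'$ is $C^I_{\phi\Sigma}$-closed, and the $\pi$-comorphism condition on $h$ $-$ that $\varphi \in C_\Sigma(\Gamma)$ implies $\alpha_\Sigma(\varphi) \in C^I_{\phi\Sigma}(\alpha_\Sigma(\Gamma))$ $-$ shows that the $\alpha_\Sigma$-preimage of any closed set $D$ is closed (if $\varphi \in C_\Sigma(\alpha_\Sigma^{-1}(D))$ then $\alpha_\Sigma(\varphi) \in C^I_{\phi\Sigma}(\alpha_\Sigma(\alpha_\Sigma^{-1}(D))) \subseteq C^I_{\phi\Sigma}(D) = D$); hence $\beta_\Sigma(m') \in Mod^J(\Sigma)$. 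Naturality of $\beta$ follows from naturality of $\alpha$ together with the satisfaction condition of $I$, and the compatibility condition holds by construction. I expect this to be the main obstacle, as it is the one place where the Galois double-closure $\star\star$ must be reconciled with the abstract $\pi$-comorphism condition; note also that $\beta$ agrees with the prescription $\alpha \mapsto \alpha^{-1}$ used to define $G$ on comorphisms, which is the required consistency.

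These universal arrows assemble into the adjunction $G \dashv F$, with unit $\eta = Id$ and counit $\epsilon_I : G(F(I)) \to I$ the transpose of the identity $1_{F(I)}$. Finally, since the unit is a natural isomorphism $-$ indeed the identity $-$ the left adjoint $G$ is fully faithful; together with $F \circ G = Id_{\pic}$ this exhibits $G$ as an isomorphism of $\pic$ onto a full subcategory of $\inc$ that is coreflective, the coreflector being $G \circ F$. Hence $\pic$ is a full coreflective subcategory of $\inc$, as claimed.
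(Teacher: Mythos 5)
Your proposal is correct, and it supplies precisely what the paper leaves out: the paper states this theorem with all proofs omitted (deferring to \cite{MaPi1}), recalling only the constructions of $F$ and $G$. Your route through the universal property of the unit is a clean and complete way to obtain the adjunction: the computation $\Gamma^{\star\star}=\bigcap\{m\in Mod^{J}(\Sigma):\Gamma\subseteq m\}=C_{\Sigma}(\Gamma)$ giving $F\circ G=Id_{\pic}$ is right; the compatibility condition together with $\models^{J}$ being membership does force $\beta_{\Sigma}(m')=\alpha_{\Sigma}^{-1}(\{m'\}^{\star})$ on objects (and on arrows for free, since each $Mod^{J}(\Sigma)$ is co-discrete, a point worth stating explicitly); your verification that preimages of $C^{I}_{\phi\Sigma}$-closed sets under $\alpha_{\Sigma}$ are $C_{\Sigma}$-closed is exactly the place where the $\pi$-comorphism condition on $h$ enters, and it is argued correctly, using also that $\{m'\}^{\star}$ is closed (which follows from $M^{\star\star\star}=M^{\star}$ for the Galois connection). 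Your closing remark that the induced left adjoint agrees with the paper's $G$ on comorphisms (because $\{m'\}^{\star}=m'$ when $m'$ is a closed theory of $J'$) is the needed consistency check, and the passage from ``unit is an isomorphism'' to ``$G$ is fully faithful, hence $\pic$ is a full coreflective subcategory'' is standard. The only detail I would flesh out is the naturality of $\beta$, which you correctly attribute to naturality of $\alpha$ plus the satisfaction condition of $I$ but do not write down; the one-line computation $Mod^{I}(\phi\sigma)(m')\models_{\phi\Sigma_{1}}\alpha_{\Sigma_{1}}(\psi)\iff m'\models_{\phi\Sigma_{2}}\alpha_{\Sigma_{2}}(Sen^{J}(\sigma)(\psi))$ settles it.
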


\section{Adjunctions between Inst, $\pi$-Inst, Cat, Diag}

In this section we continue and expand the analysis of categorical relations between categories whose objects are categories endowed with some extra structure like categories of  ($\pi$-)institutions, categories of categories and categories of $Set$-based diagrams.

\subsection{An adjunction between $\inm$ and $\pim$}\label{adj inm pim}

%Gabriel: \textcolor{red}{Mesmo os resultados sendo originais as demonstra\c{c}\~{o}es dessa se\c{c}\~ao s\~ao efetivamente as mesmas do [MaPi1]. Acho que seria mais feliz transformar essa se\c{c}\~{a}o em remark da \ref{adj inc pic} ou dar uma enxugada tremenda. Coloquei uma poss\'ivel vers\~{a}o remark abaixo, em \textcolor{magenta}{rosa}}\\
%\textcolor{magenta}{\begin{Obs} 

It is natural to ask whether we could achieve a similar adjunction considering morphisms instead of comorphisms, that is, taking $\inm$ and $\pim$ instead of $\inc$ and $\pic$. 
%The answer, which we sketch here, is yes. Indeed, defining functors $\inm\xrightarrow{F'}\pim\xrightarrow{G'}\inm$ having the same action on objects as $F$ and $G$ and mutatis mutandis the action on arrows, we would find that $G' \dashv F'$. Further, we would still find that the unit of the adjunction is the identity. 
%\end{Obs}}
%{\color{green} CONCORCO COM O REMARK NA SECAO 1.3}
In  this subsection, we sketch a proof that the category of $\pi$-institutions and morphisms is isomorphic to a full coreflective subcategory of the category of institutions and morphisms: this is  a natural variant of the results in \cite{MaPi1} which were recalled in subsection \ref{adj inc pic}.

Let $I=\langle \sig,Sen,Mod,\models \rangle$ be an institution. Given $\Sigma\in|\sig|$ let:
\[\Gamma^*:=\{m\in Mod(\Sigma):m\models_{\Sigma}\varphi \ for \ all \ \varphi\in\Gamma\} \] \[and\] \[
M^* := \{\varphi\in Sen(\Sigma): m \models_{\Sigma}\varphi \ for \ all \ m\in M\}
\]
for any $\Gamma\subseteq Sen(\Sigma)$ and $M\subseteq |Mod(\Sigma)|$. These mappings cleary define a Galois connection between $\mathcal{P}(Sen(\Sigma))$ and $\mathcal{P}(|Mod(\Sigma)|)$. Therefore, $Con^{I}_\Sigma(\Gamma):=\Gamma^{**}$ defines a closure operator on $\mathcal{P}(Sen(\Sigma))$ for any $\Sigma\in|\sig|$.
\begin{Lem}
    Let $\langle \phi,\alpha,\beta\rangle:I\to I'$ be an arrow in $\inm$ and $\sigma\in|\sig|$. Given $\Gamma\subseteq Sen(\Sigma)$ and $M\subseteq |Mod(\Sigma)|$ the following holds:
    \begin{itemize}
        \item $\beta_{\Sigma}[(\alpha_{\Sigma}[\Gamma])^*]\subseteq\Gamma^*$
        \item $\alpha_{\Sigma}[(\beta_{\Sigma}[M])^*]\subseteq M^*$
    \end{itemize}
\end{Lem}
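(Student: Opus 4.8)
The plan is to read off both inclusions directly from the biconditional satisfaction (compatibility) condition that is part of the definition of an institution morphism, namely
\[ m \models_{\Sigma} \alpha_{\Sigma}(\varphi') \ \iff\ \beta_{\Sigma}(m) \models'_{\Phi(\Sigma)} \varphi', \]
holding for every $m \in |Mod(\Sigma)|$ and every $\varphi' \in Sen'(\Phi\Sigma)$. Before computing I would pin down the types, since $(-)^{*}$ denotes the Galois map of \emph{both} $I$ and $I'$ here. In the first item $\Gamma$ must range over subsets of $Sen'(\Phi\Sigma)$ (so that $\alpha_{\Sigma}[\Gamma]\subseteq Sen(\Sigma)$ is defined): the inner star $(\alpha_{\Sigma}[\Gamma])^{*}$ is then taken in $I$ and lands in $|Mod(\Sigma)|$, so that $\beta_{\Sigma}$ carries it into $|Mod'(\Phi\Sigma)|$, while the outer star $\Gamma^{*}$ is taken in $I'$ and also lands in $|Mod'(\Phi\Sigma)|$. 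In the second item $M\subseteq|Mod(\Sigma)|$, the inner star $(\beta_{\Sigma}[M])^{*}$ is taken in $I'$ and $\alpha_{\Sigma}$ pulls it back to a subset of $Sen(\Sigma)$, the same ambient set as $M^{*}$. With the types fixed, each inclusion becomes an inclusion between subsets of a single set.

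For the first inclusion I would pick an arbitrary element of $\beta_{\Sigma}[(\alpha_{\Sigma}[\Gamma])^{*}]$, say $\beta_{\Sigma}(m)$ with $m \in (\alpha_{\Sigma}[\Gamma])^{*}$. Unwinding the inner star gives $m \models_{\Sigma} \alpha_{\Sigma}(\varphi')$ for every $\varphi' \in \Gamma$; applying the left-to-right direction of the compatibility biconditional to each such $\varphi'$ yields $\beta_{\Sigma}(m) \models'_{\Phi(\Sigma)} \varphi'$ for every $\varphi' \in \Gamma$, which is precisely the assertion $\beta_{\Sigma}(m) \in \Gamma^{*}$. Ranging over $m$ gives $\beta_{\Sigma}[(\alpha_{\Sigma}[\Gamma])^{*}] \subseteq \Gamma^{*}$.

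The second inclusion is proved symmetrically: starting from $\alpha_{\Sigma}(\varphi')$ with $\varphi' \in (\beta_{\Sigma}[M])^{*}$, I unwind to get $\beta_{\Sigma}(m) \models'_{\Phi(\Sigma)} \varphi'$ for all $m \in M$, and the right-to-left direction of the same biconditional gives $m \models_{\Sigma} \alpha_{\Sigma}(\varphi')$ for all $m \in M$, i.e. $\alpha_{\Sigma}(\varphi') \in M^{*}$. I do not foresee a genuine obstacle; the only care needed is the bookkeeping of which Galois operator belongs to which institution and of the directions in which $\alpha$ and $\beta$ travel. It is worth noting that, since the compatibility condition is a biconditional, each step is reversible pointwise, so equality holds whenever $\beta_{\Sigma}$ (respectively $\alpha_{\Sigma}$) is surjective, and in general only the displayed inclusions survive because these maps need not be surjective. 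This is exactly the morphism-side counterpart of the Galois-connection compatibility underpinning the comorphism adjunction recalled in subsection~\ref{adj inc pic}, and it is what will subsequently yield the closure-operator inequalities defining the functor on $\inm$.
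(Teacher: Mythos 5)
Your proof is correct and is exactly the argument the paper intends: the paper omits the proof, citing Lemma 2.8 of \cite{MaPi1}, whose proof is the same direct elementwise unwinding of the morphism compatibility biconditional $m \models_{\Sigma} \alpha_{\Sigma}(\varphi') \iff \beta_{\Sigma}(m) \models'_{\Phi(\Sigma)} \varphi'$ applied to the two Galois stars. Your repair of the typing — reading $\Gamma \subseteq Sen'(\Phi\Sigma)$ rather than the misprinted $\Gamma \subseteq Sen(\Sigma)$, and keeping track of which institution each star is computed in — is also the correct reading of the statement.
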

\Dem
The proof is similar to the one of \textbf{Lemma 2.8} in \cite{MaPi1}\\
\qed

Consider now the following functor:
\begin{align*}
    F:\inm&\to\pim\\
        I &\mapsto \langle\sig,Sen,\{Con^I_\Sigma\}_{\Sigma\in|\sig|}\rangle
\end{align*}
The proof that F is well defined on objects can be found on \cite{MaPi1}. The action on morphisms is defined as follows:
\begin{align*}
    I&\xrightarrow{\langle\phi,\alpha,\beta\rangle}I'\\
    F(I)&\xrightarrow{\ \langle\phi,\alpha\rangle}F(I')
\end{align*}
%Let us prove that, given an arrow $f=\langle\phi,\alpha,\beta\rangle$ in $\inm$, $F(f)$ satisfies the compatibility condition. Given $\Sigma\in|\sig|$ and $\{\varphi\}\cup\Gamma\subseteq Sen(\Sigma)$ suppose that $\alpha_\Sigma(\varphi)\not\in Con_{\phi(\Sigma)}(\alpha_\Sigma[\Gamma])$, that is, $\alpha_\Sigma(\varphi)\not\in\Gamma^{**}$. Then there is $m\in\Gamma^*$ such that $m\not\models^{'}_{\phi(\Sigma)}\alpha_{\Sigma(\varphi)}$ and, as $f$ is morphism of institution, $\beta_\Sigma(m)\not\models_{\Sigma}\varphi$. By \textbf{lemma 3.1 }, $\beta_\Sigma(m)\in\Gamma^*$ so $\varphi\not\in\Gamma$ and, therefore, $\varphi\not\in Con^I_{\Sigma}(\Gamma)$\\
%Now, given morphisms $f=\langle\phi,\alpha,\beta\rangle:I\to I'$ and $f'=\langle\phi',\alpha',\beta'\rangle:I'\to I''$ in $\inm$ notice that $F(f'\cdot f)=\langle \phi'\cdot\phi,\alpha'\cdot\alpha F\rangle=Ff'\cdot Ff$, furthermore, for any institution I we have: $F(1_I)=\langle 1_{\sig},1_{Sen}\rangle=1_{F(I)}$. It follows that F is a functor.\\
Consider now the following application,
\begin{align*}
    G:\pim&\to\inm\\
    J&\to\langle \sig,Sen,Mod^J,\models^J\rangle
\end{align*}
Where:
\begin{itemize}
    \item $Mod^J:\sig^{op}\to\cat$ is defined as: 
\[\Sigma\xrightarrow{f}\Sigma'\mapsto\{C_{\Sigma'}(\Gamma'):\Gamma'\subseteq Sen(\Sigma')\}\xrightarrow{Sen(f)^{-1}}\{C_\Sigma(\Gamma):\Gamma\subseteq Sen(\Sigma)\}\]
    \item For each $\Sigma\in|\sig|$, $\models^{J}_{\Sigma}\subseteq|Mod^J(\sigma)|\times Sen(\Sigma)$ is defined such that, give $m\in |Mod(\Sigma)|$ and $\varphi \in Sen(\sigma)$, $m\models^{J}_{\Sigma}\varphi$ iff $\varphi\in m$.\\
The proof that $Mod^J$ is well defined and that $G(J)$ satisfies the compatibility condition and is indeed an institution can be found in \cite{MaPi1}
\end{itemize}
Given a morphism $f=\langle \phi,\alpha\rangle:J\to J'\ $ in $\ \pim$ define, for $\Sigma\in|\sig|$ and $m\in |Mod^{J}(\Sigma)|$, $\beta_\Sigma(m):=\alpha_{\Sigma}^{-1}(m)$. Let us prove that $\beta_\Sigma:Mod^{J}(\Sigma)\to Mod^{J'}(\phi(\Sigma))$.% Given $\varphi\in C_{\phi(\Sigma)}(\alpha_{\Sigma}^{-1}(m))$ notice that, as $f$ is a morphism of $\pi$-institutions, we have that $\alpha_\Sigma(\varphi)\in C_\Sigma(\alpha_\Sigma(\alpha^{-1}_{\Sigma}(m)))\subseteq C_\Sigma(m)(=m)$. Therefore, $\varphi\in\alpha_\Sigma^{-1}(m)$ and $\alpha_{\Sigma}^{-1}(m)=C_{\phi(\Sigma)}(\alpha_{\Sigma}^{-1}(m))$ so $\alpha_\Sigma^{-1}(m)\in Mod^{J'}(\phi(\Sigma))$.\\
%To prove that $\beta$ is a natural transformation simply notice that, as $\alpha$ is a natural transformation, the bellow square commutes for all arrows $f:\Sigma\xrightarrow{f}\Sigma'$ in $\sig$.
\[\xymatrix{
\mathcal{P}(Sen(\Sigma)) & \mathcal{P}(Sen'(\phi(\Sigma)))\ar[l]_{\alpha_\Sigma^{-1}}\\
\mathcal{P}(Sen(\Sigma'))\ar[u]^{Sen(f)^{-1}} & \mathcal{P}(Sen'(\phi(\Sigma')))\ar[l]^{\alpha_{\Sigma'}^{-1}}\ar[u]_{Sen('\phi(f))^{-1}}    }\]
Let us register prove the compatibility condition for morphisms. Given $\Sigma\in|\sig|$, $m\in Mod^J(\Sigma)$ and $\varphi\in Sen(\phi(\Sigma))$ we have:
\begin{align*}
    m\models^J_\Sigma\alpha_\Sigma(\varphi) &\iff \alpha_\Sigma(\varphi)\in m\\
                                            &\iff \varphi\in\alpha_\Sigma^{-1}(m)\\
                                            &\iff \varphi\in\beta_\Sigma(m)\\
                                            &\iff \beta_\Sigma(m)\models^{J'}_{\phi(\Sigma)}\varphi
\end{align*}
It follows that $G(f)=\langle \phi,\alpha,\beta\rangle$ is a morphism of institutions. To prove $G$ a functor simply notice that, given $f=\langle\phi,\alpha\rangle:J\to J'$ and $f'=\langle\phi',\alpha'\rangle:J'\to J''$ in $\pim$, $G(f'\cdot f)=\langle \phi'\cdot \phi,\alpha'\cdot\alpha \phi,(\alpha'\cdot\alpha \phi)^{-1}\rangle = \langle \phi'\cdot \phi,\alpha'\cdot\alpha \phi, \alpha^{-1} \phi\cdot\alpha'^{-1} \rangle = G(f')\cdot G(f)$ and, for any $\pi$-institution J, routine calculations show $G(1_J)=1_{G(I)}$.

In fact, as in \cite{MaPi1}, we have the following:

\begin{Teo} 
    The functors \begin{tikzcd} \inm \arrow[r,"F",shift left = 1] & \pim \arrow[l,"G",shift left = 1] \end{tikzcd} establish and adjunction $G\dashv F$. Moreover, since $F \circ G = Id_{\pim}$ and the unity of this adjunction, the natural transformation $\eta : Id_{\pim} \rightarrow F\circ G$, is the identity. Thus the category ${\pim}$ can be seen as a full coreflective subcategory of $\inm$.
\end{Teo}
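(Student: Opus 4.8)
The plan is to mirror, step for step, the comorphism argument recalled in Subsection~\ref{adj inc pic}, organizing everything around the unit $\eta$ and its universal property rather than around an explicit hom-set bijection.

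First I would verify that $F\circ G = Id_{\pim}$. On objects this reduces to a computation inside $G(J)$: its models at $\Sigma$ are exactly the $C_\Sigma$-closed subsets of $Sen(\Sigma)$ and satisfaction is membership, so for $\Gamma\subseteq Sen(\Sigma)$ the double Galois star $\Gamma^{**}$ taken in $G(J)$ is the intersection of all closed sets containing $\Gamma$, which equals $C_\Sigma(\Gamma)$ since $C_\Sigma$ is a closure operator; hence $Con^{G(J)}_\Sigma = C_\Sigma$. On morphisms $F$ simply discards the model component, while $G$ leaves $\langle\Phi,\alpha\rangle$ untouched and adjoins $\beta_\Sigma = \alpha_\Sigma^{-1}$, so $F(G(f)) = f$. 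With $F\circ G = Id_{\pim}$ in place I would take the unit $\eta : Id_{\pim}\to F\circ G$ to be the identity, exactly as in the comorphism case.

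The real content is then the universal property of $\eta_J = Id_J$: for every institution $I$ and every $\pi$-morphism $\langle\Phi,\alpha\rangle : J\to F(I)$ there should be a \emph{unique} institution morphism $\langle\Phi,\alpha,\beta\rangle : G(J)\to I$ lifting it. The preceding Lemma is the natural tool both for checking that $F$ is well defined on morphisms and for the bookkeeping relating $\alpha$, $\beta$ to the Galois operators. Unwinding the compatibility condition for such a lift, the model component is forced to satisfy $\mathrm{Th}_I(\beta_\Sigma(m)) = \alpha_\Sigma^{-1}(m)$ for each closed $J$-theory $m$; that is, $\beta_\Sigma(m)$ must be a model of $I$ over $\Phi(\Sigma)$ whose theory is precisely $\alpha_\Sigma^{-1}(m)$.

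This reconstruction of $\beta$ is the step I expect to be the main obstacle, and it is exactly where the morphism case parts ways with the comorphism case. Because institution morphisms and comorphisms have opposite variance in their model components, in the comorphism setting $\beta$ runs from models of the ambient institution to closed theories and can be produced once and for all by $m\mapsto m^{\ast}$ --- the theory of a model being automatically Galois-closed --- so existence and uniqueness are immediate. Here the forced reconstruction runs the other way, assigning to a closed theory an actual model of $I$ realizing it, and this is not automatic: a Galois-closed set of sentences need be neither the theory of a single model nor, when it is, the theory of a unique one. Securing this passage is the crux; I would analyze it carefully, keeping in mind that the Galois connection robustly supplies only the opposite assignment $m\mapsto m^{\ast}$. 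Once the correct direction is pinned down, the triangle identities and the identification of $\eta$ with the identity follow formally, as in Subsection~\ref{adj inc pic}.
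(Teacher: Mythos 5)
Your verification of $F\circ G = Id_{\pim}$ and your reduction of the adjunction claim to a lifting problem (a unique $\beta$ with $\mathrm{Th}_I(\beta_\Sigma(m)) = \alpha_\Sigma^{-1}(m)$ over each $\pi$-morphism $\langle\Phi,\alpha\rangle : J \to F(I)$) are correct, and they match the route the paper implicitly takes. But the obstacle you flag at the end is not a difficulty to be worked around: it is fatal, and you should trust your analysis over the statement. The theorem as stated is false. Concretely, let $I$ have the terminal signature category, $Sen^I(\Sigma_0)=\{\psi\}$, and $Mod^I(\Sigma_0)$ discrete with two objects $m_1,m_2$, both satisfying $\psi$; put $J = F(I)$. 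Then the only closed set is $\{\psi\}$, so $Mod^{G(J)}(\Sigma_0)$ has a single object, and sending that object to $m_1$ or to $m_2$ gives two distinct institution morphisms $G(J)\to I$, both mapped by $F$ to $id_J$. Thus $\mathrm{Hom}_{\inm}(G(J),I)$ has two elements while $\mathrm{Hom}_{\pim}(J,F(I))$ has one, so there is no bijection between these hom-sets at all; no choice of unit can repair this, and $G$ is simply not left adjoint to $F$. Existence fails just as easily: with $Sen^I(\Sigma_0)=\{\psi_1,\psi_2\}$ and two models each satisfying exactly one of the $\psi_i$, the set $\emptyset$ is Galois-closed but is the theory of no single model, so the identity $J \to F(I)$ admits no lift whatsoever. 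The paper offers no argument beyond ``as in [MaPi1]'', and this is precisely where the analogy with the comorphism case breaks, for exactly the variance reason you identify.

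What is true --- and what your closing remark that the Galois connection ``robustly supplies only $m \mapsto m^{*}$'' points to --- is the opposite adjunction $F \dashv G$, exhibiting $\pim$ as a full \emph{reflective} (not coreflective) subcategory of $\inm$. For an institution morphism $\langle\Phi,\alpha,\beta\rangle : I \to G(J)$, the compatibility condition forces $\beta_\Sigma(m) = \alpha_\Sigma^{-1}(m^{*})$, the canonical direction: this set is always $C^J$-closed (by the $\pi$-morphism condition together with the fact that $m^{*}$ is Galois-closed) and is natural in $\Sigma$ (by the satisfaction axiom of $I$), so $h \mapsto F(h)$ gives a bijection $\mathrm{Hom}_{\inm}(I,G(J)) \cong \mathrm{Hom}_{\pim}(F(I),J)$. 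Here the counit $F\circ G = Id_{\pim}$ is the identity, and the unit $\eta_I : I \to G(F(I))$ is the theory map $\langle Id_{\sig}, id_{Sen}, m\mapsto m^{*}\rangle$. The morphism case is thus the mirror image, not a copy, of the comorphism case of Subsection 1.3: there, the model component of a comorphism \emph{out of} $G(J)$ runs from models to theories (canonical); here, it is the model component of a morphism \emph{into} $G(J)$ that does. So your proposal is incomplete only because you stopped short of the right conclusion: the lifting problem you isolate has no solution, and the adjunction must be flipped.
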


\subsection{Adjunctions between $\CAT$ and $\pic$}\label{adj cat pic}

In this section we detail left and right adjoints for the forgetful functor from $\pic$ to $\CAT$. Something of notice here is the similarity between the functors shown here and the adjoints to the forgetful functor from {\bf{Top}} to $\set$. Indeed, we describe a left adjoint that associates categories to their ``discrete" $\pi$-institution, where every set is closed, and a right adjoint that maps to their ``codiscrete" $\pi$-institution, where the only closed sets are the empty set and the entire set of formulas. The place of these two constructions in the theory of $\pi$-institutions is then similar to the place of the ``(co)discrete" topology in point set topology. That is to say, as illustrative examples of pathologies.

Let us commence by the right adjoint. We begin by defining an action on the objects of $\CAT$; given a category $\mathcal{A}$ let $\top\mathcal{A}:=\langle\mathcal{A},\ast,\{Con_{c}\}_{a\in|\mathcal{A}|}\rangle$ where $\ast:\mathcal{A}\to\set$ is the constant functor to the singleton set and, for each object $a$ in $\mathcal{A}$ and $\Gamma\subseteq\{\ast\}$, we define $Con_{a}(\Gamma)=\{\ast\}$. It is clear that $Con_{a}$ is closure operator on $\{\ast\}$. Moreover, for any arrow $a\xrightarrow{f}a'$ in $\mathcal{A}$ and $\Gamma\subseteq\{\ast\}$, we have that $\ast f(Con_{a}(\Gamma))=Con_{a'}(\ast f(\Gamma))$ and thus $\top\mathcal{A}$ is a $\pi$-institution.

We can now extend $\top$ to morphisms. Given some functor $F:\mathcal{A}\to\mathcal{B}$, we see that there is a unique $!: \ast \Rightarrow\ast F$; furthermore, routine calculations show $\varphi\in Con_{a}(\Gamma)\Rightarrow !_{a}(\varphi)\in Con_{Fa}(!_{a}(\Gamma))$ for $\{\varphi\}\cup\Gamma\subseteq\{\ast\}$. Define then $\top F=\langle F,!\, \rangle$ the remarks above showing it a comorphism between $\top\mathcal{A}$ and $\top\mathcal{B}$.

To prove that $\top$ behaves functorially notice, firstly, that the lone arrow $\ast\Rightarrow\ast$ is $1_{\ast}$ so $\top(1_{\mathcal{A}})=\langle 1_{\mathcal{A}},1_{\ast}\rangle=1_{\top\mathcal{A}}$. Finally, the below diagram guarantees that the composition is well behaved.

\[\xymatrix{
\ast c\ar@{-->}[r]\ar@{-->}[d]& \ast Fc\ar@{-->}[r]\ar@{-->}[d]&\ast GFc\ar@{-->}[d]\\
\ast c'\ar@{-->}[r]&\ast Fc'\ar@{-->}[r]&\ast GFc'
}\]

\begin{Teo}
    Let $\ U:\pic\to\CAT$ the forgetful functor, taking each $\pi$-institution to its signature category and each comorphism to its first coordinate. The functors $\top:\CAT\to\pic$ and $U:\pic\to\CAT$ establish an adjunction $\top\vdash U$ with counit $\eta_{\mathcal{A}}= 1_{\mathcal{A}}$.
\end{Teo}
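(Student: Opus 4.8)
The plan is to prove the adjunction by exhibiting, for every $\pi$-institution $J$ and every category $\mathcal A$, a bijection
\[
\mathrm{Hom}_{\pic}(J,\top\mathcal A)\;\cong\;\mathrm{Hom}_{\CAT}(UJ,\mathcal A)
\]
natural in both variables; this is precisely the universal property witnessing $U\dashv\top$ (the relation written $\top\vdash U$ in the statement, with $\top$ the right adjoint). First I would unfold the left-hand side: since $\sig^{\top\mathcal A}=\mathcal A$ and $Sen^{\top\mathcal A}=\ast$, a comorphism $J\to\top\mathcal A$ is a pair $\langle\Phi,\alpha\rangle$ consisting of a functor $\Phi:\sig^J\to\mathcal A$ (equivalently $\Phi:UJ\to\mathcal A$) together with a natural transformation $\alpha:Sen^J\Rightarrow\ast\circ\Phi$ subject to the comorphism condition $\varphi\in C_\Sigma(\Gamma)\Rightarrow\alpha_\Sigma(\varphi)\in Con_{\Phi\Sigma}(\alpha_\Sigma(\Gamma))$.

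The crux of the argument is that both the transformation $\alpha$ and the condition imposed on it are forced and therefore carry no information. Indeed, $\ast\circ\Phi$ is again the constant functor at the terminal object $\{\ast\}$ of $\set$, so each component $\alpha_\Sigma:Sen^J(\Sigma)\to\{\ast\}$ is the unique map into the singleton, all naturality squares commute automatically, and hence there is exactly one such $\alpha$; moreover, by the very definition of $\top\mathcal A$ one has $Con_{\Phi\Sigma}(S)=\{\ast\}$ for every $S$, so the comorphism condition holds vacuously. Consequently the assignment $\langle\Phi,\alpha\rangle\mapsto\Phi$ is a well-defined bijection onto $\mathrm{Hom}_{\CAT}(UJ,\mathcal A)$, its inverse sending a functor $g:UJ\to\mathcal A$ to the comorphism $\langle g,\,!\,\rangle$, where $!$ is the unique transformation into $\ast\circ g$.

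It then remains to verify naturality in $J$ and in $\mathcal A$, which I expect to be entirely routine: precomposition by a comorphism acts on the first coordinate exactly as precomposition of the underlying functors (and $U$ extracts precisely that coordinate), while postcomposition by $\top F$, for $F:\mathcal A\to\mathcal B$, acts on the first coordinate as postcomposition by $F$; both squares then commute by inspection. Finally I would record the explicit unit and counit that the statement singles out. Since applying $U$ to $\top\mathcal A$ returns $\mathcal A$ and to $\top F$ returns $F$, we have $U\circ\top=Id_{\CAT}$, so the counit $U\top\mathcal A\to\mathcal A$ is literally $1_{\mathcal A}$ (this is the $\eta_{\mathcal A}$ of the statement); the unit is the comorphism $\eta'_J=\langle 1_{\sig^J},\,!\,\rangle:J\to\top UJ$, and the two triangle identities are immediate from these descriptions together with $U\langle 1_{\sig^J},!\rangle=1_{UJ}$ and $\top 1_{\mathcal A}=1_{\top\mathcal A}$. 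No step presents a genuine obstacle; the only thing demanding care is bookkeeping the variances and the fact that we work with \emph{comorphisms} rather than morphisms, so that the forgetful functor is matched to the correct coordinate.
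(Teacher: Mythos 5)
Your proposal is correct and in substance identical to the paper's proof: both arguments rest on the two observations that the transformation $\alpha$ into the constant singleton functor $\ast\circ\Phi$ is unique (with naturality automatic), and that the comorphism condition holds trivially because $Con_{\Phi\Sigma}(-)=\{\ast\}$, so that a comorphism $J\to\top\mathcal A$ carries exactly the information of a functor $UJ\to\mathcal A$. The only difference is packaging: the paper verifies the universal property of the counit $\eta_{\mathcal A}=1_{\mathcal A}$ directly (unique factorization of each $F:UJ\to\mathcal A$ through a comorphism $\langle F,\alpha\rangle:J\to\top\mathcal A$), while you exhibit the natural hom-set bijection and then read off the unit, counit and triangle identities --- equivalent formulations of the same argument.
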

\Dem
Given some a $\pi$-institution J and a functor $F:\sig^J\to\mathcal{A}$, consider the below diagram:
\[\begin{tikzcd}
\mathcal{A} & U\top\mathcal{A}\arrow[l, "1_{\mathcal{A}}"'] &  & \top\mathcal{A} \\
                                                          & \mathbb{S}ig^{J}\arrow[lu, "F"]\arrow[u, "F"'{name=U}]                &  & J\arrow[u, "{\langle F,\alpha\rangle}"{name=D}]
                                                          \arrow[from=D,to=U,shorten= 4mm, xshift=2ex, mapsto]
\end{tikzcd}\]
Where $\alpha$ is the single arrow $Sen\Rightarrow\ast F$. Given $\{\varphi\}\cup\Gamma\subseteq Sen(\Sigma)$ we have that $\varphi\in C_{\Sigma}(\Gamma)\Rightarrow\alpha_{\Sigma}(\varphi)=\ast$. As $Con_{F\Sigma}(\alpha_{\Sigma}(\Gamma))=\{\ast\}$ it follows that $\varphi\in C_{\Sigma}(\Gamma)\Rightarrow\alpha_{\Sigma}(\varphi)\in Con_{F\Sigma}(\alpha_{\Sigma}(\Gamma))$ and thus $\langle F,\alpha\rangle$ is indeed a comorphism between $J$ and $D\mathcal{A}$. As $\langle F,\alpha\rangle$ is clearly the only arrow that makes the diagram commute, the result follows.
\qed

We can now describe the left adjoint. Consider the following functor:

 \[\begin{tikzcd}
\bot:\CAT \arrow[r]                            & \pi\mathbf{Ins}_{co}                                                                                                \\
\mathcal{A} \arrow[d, "F"'{name=A}] \arrow[r, maps to] & {\langle \mathcal{A},\emptyset, (Con_a)_{a\in|\mathcal{A}|} \rangle} \arrow[d, "{\langle F,!\,\rangle}"{name=B}] \\
\mathcal{B} \arrow[r, maps to]                 & {\langle \mathcal{B},\emptyset, (Con_b)_{b\in|\mathcal{B}|}\rangle}
\arrow[from=A,to=B,shorten <= 3mm, shorten >= 16.5mm,maps to]
\end{tikzcd}\]

Where $\emptyset$ is the constant functor to the empty set, $Con_{a}$ is the single closure operator on the empty set and $!$ is the unique natural transformation $\emptyset\Rightarrow\emptyset F$. By vacuity, $\langle F,!\,\rangle$ satisfies the comorphism condition. Proving that $\bot$ is indeed a functor uses similar arguments to the ones given above.

\begin{Teo}
    Let $U$ as above. The functors $\bot$ and $U$ establish an adjunction $\bot\dashv U$ with unit $\epsilon_{\mathcal{A}}= 1_{\mathcal{A}}$.
\end{Teo}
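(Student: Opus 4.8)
The plan is to establish the required adjunction exactly as in the proof of the preceding theorem for $\top$, but dualizing every appeal to the singleton $\{\ast\}$ into an appeal to the empty set. Since $U(\bot\mathcal{A}) = \mathcal{A}$ by construction, the candidate unit $\epsilon_{\mathcal{A}} = 1_{\mathcal{A}}$ is well-typed, and it suffices to verify its universal property: for every $\pi$-institution $J = \langle \sig^{J}, Sen^{J}, \{C_{a}\}_{a\in|\sig^J|}\rangle$ and every functor $F : \mathcal{A} \to U J = \sig^{J}$, I must produce a unique comorphism $h : \bot\mathcal{A} \to J$ in $\pic$ whose first coordinate is $F$ (equivalently, satisfying $U(h)\circ \epsilon_{\mathcal{A}} = F$).

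First I would pin down what such an $h = \langle F, \alpha\rangle$ must be. Its second component is a natural transformation $\alpha : \emptyset \Rightarrow Sen^{J}\circ F$, where $\emptyset$ denotes the constant functor to the empty set, i.e.\ the sentence functor of $\bot\mathcal{A}$. Since each component $\alpha_{a}$ is a function out of $\emptyset$, there is exactly one choice for it $-$ the empty function $-$ and the naturality squares commute vacuously, both legs being functions out of $\emptyset$. Hence there is a unique candidate $\alpha$, which already yields uniqueness of $h$.

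Next I would check that this unique $\langle F, \alpha\rangle$ is in fact a comorphism, i.e.\ that $\varphi \in Con_{a}(\Gamma) \Rightarrow \alpha_{a}(\varphi) \in C_{Fa}(\alpha_{a}(\Gamma))$ holds for all $\{\varphi\}\cup\Gamma \subseteq \emptyset$, the value at $a$ of the constant sentence functor of $\bot\mathcal{A}$. This is immediate by vacuity, since $\emptyset$ has no elements and so there is no such $\varphi$ to test. Combining existence and uniqueness, the assignment $h \mapsto U(h)$ is a bijection $\mathrm{Hom}_{\pic}(\bot\mathcal{A}, J) \to \mathrm{Hom}_{\CAT}(\mathcal{A}, U J)$, natural in both arguments by the routine check, which is precisely the statement $\bot \dashv U$ with unit $\epsilon_{\mathcal{A}} = 1_{\mathcal{A}}$.

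I expect no genuine obstacle here: the entire argument is dual to the one just given for $\top$, with emptiness playing the role that the terminal singleton played there. The only points requiring (trivial) care are the correct orientation of the adjunction $-$ $\bot$ on the left and $U$ on the right $-$ and the observation that both the existence of $\alpha$ and the comorphism implication are automatic precisely because the sentence functor of $\bot\mathcal{A}$ takes the empty value everywhere.
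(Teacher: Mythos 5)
Your proof is correct and follows essentially the same route as the paper's: both verify the universal property of the identity unit by noting that the only possible second component is the unique natural transformation out of the constant empty sentence functor, that the comorphism condition holds by vacuity, and that uniqueness is automatic. Your write-up merely makes explicit a few details the paper leaves implicit (why $\alpha$ is unique, and the naturality of the resulting hom-set bijection).
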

\Dem 
Given some a $\pi$-institution J and a functor $F:\mathcal{A}\to\sig^J$, consider the below diagram:
\[\begin{tikzcd}
\mathcal{A}\arrow[r, "1_{\mathcal{A}}"]\arrow[rd, "F"'] & U\bot\mathcal{A}\arrow[d, "F"{name=U}]  &  & \bot\mathcal{A}\arrow[d, "{\langle F,\alpha\rangle}"'{name=D}]\\
                                                          & \mathbb{S}ig^{J}               &  & J
                                                          \arrow[from=D,to=U,shorten= 4mm, xshift=2ex, mapsto]
\end{tikzcd}\]
Where $\alpha$ is the only natural transformation $\emptyset\Rightarrow Sen^{J}F$. We argue by vacuity to show that $\langle F,\alpha\rangle$ is a comorphism. Since $\langle F,\alpha\rangle$ it is clearly the only arrow that makes the diagram commute, the result follows.\\
\qed
\begin{Obs}
    It is easy to see how one would go on defining the $\pim$ versions of the functors $\top$ and $\bot$. This, of course, prompt us to question if these functors still define an adjunction. Routine calculations show that the directions would be reversed, that is, in the $\pim$ case we have: $\bot\vdash U \vdash\top$
\end{Obs}
\begin{Obs}\label{ExemploDeDiag}
    Let us consider a generalization of $\pic$ for a moment. Given a concrete category $\mathcal{C}$, i.e. a faithful functor $|-| : \mathcal{C} \to Set$, a  $\mathcal{C}\!-\!\pi\!-\!$institution is a triple of the form $\langle \sig, Sen:\sig\to C, (C_\Sigma:\mathcal{P}|Sen(\Sigma)|\to\mathcal{P}|Sen(\Sigma)|)_{\Sigma\in|C|} \rangle$ where $\sig$ is a category, $Sen$ a functor and $C_\Sigma$ a closure operator on $\mathcal{P}|Sen(\Sigma)|$ satisfying structurality; furthermore, one can easily generalize a version of comorphisms for $\mathcal{C}\!-\!\pi\!-\!$institutions. Consider then $\mathcal{C}\!-\!\pic$--- the category of $\mathcal{C}\!-\!\pi\!-\!$institution comorphisms.\\
    Let $1$ a terminal object in the concrete category $\mathcal{C}$. We can now define a functor $\top_\mathcal{C}:\CAT\to\mathcal{C}\!-\!\pic$ as 
    \[\mathcal{A}\xrightarrow{F}\mathcal{B}\mapsto\langle\mathcal{A},1,(Con_a)_{a\in Ob(\mathcal{A})}\rangle\xrightarrow{\langle F,\alpha\rangle}\langle\mathcal{B},1,(Con_b)_{b\in Ob(\mathcal{B})}\rangle\]
    Where $1$ is the constant functor to the terminal object, $Con_a(\Gamma)=|Sen(a)|$ for each $a\in Ob(\mathcal{A})$ and $\Gamma\subseteq|Sen(a)|$ and $\alpha$ is the unique $1\Rightarrow1F$. Using the methods analogous we see that $\top_\mathcal{C}\vdash forgetful$. Suppose now that $\mathcal{C}$ had a initial object $0$, one can easily see how to define $\bot_\mathcal{C}$ --- the left adjoint to the forgetful --- mimicking $\bot$.\\
    It is common, specially when dealing with propositional logics, to define the syntax as an algebraic structure instead of a set. This remark could be of use in that scenario.
\end{Obs}

\subsection{Adjunctions $Diag_{co} \leftrightarrows \pic$}

 %{\gabriel{Acho que faltou dar uma motiva\c{c}\~{a}o para essa se\c{c}\~{a}o}}

%{\color{red}{Faltou preencher esta. O Gabriel e eu fizemos as contas em reunioes, tudo funciona bem. Talvez em funcao do tempo podemos ser mais economicos e so fazer as defiibcoes completas dos funtores e dar "esbocos de prvas"
%-adunjucoes (com morpfismos r co-morfismos):  Pi-inst,  Diag

%Acho que a defesa aqui seria a indicacao de possivel mudanca e base de SET para um categoria localmente pequena: fica entre a subsecai anterior e uma preparacao para a seguite com os rooms
 %ACHO QUE TAMBEM DEVERIA EXPLICITAR O ISMORPFISMO CAT e $Diag_{co}(1)$, justificnado que isto tambem estenderia, por um lado a sexao anterior. 
 %In this short subsection, we describe simple  adjunctions between the category $\pic$ and categories of ``Set-based diagrams".
% FALTARIA JUNTAR ESTES PARAGRAFOS ACIMA...
% }}

We begin this section by describing $Diag_{co}(C)$ and $Diag_{mor}(C)$, the categories of diagrams for a given category $C$. Diagrams for $\bf{Set}$ can be initially seen as $\pi$-institutions minus the consequence relation and the $2$-categorially minded will recognize diagrams for $C$ as the Grothendieck construction for $\mathbf{CAT}(-,C)$. After this introduction, we proceed to obtain right and left adjoints to the the forgetful $Diag_{co}(\bf{Set})\to\pic$. Finally, we further this result to categories adjoint to $\bf{Set}$. In this sense the purpose of this section is twofold:
\begin{itemize}
\item Firstly, it may serve as a path to the theory of ``generalized" $\pi$-institutions, that is, $\pi$-institutions having sentence functors over any arbitrary category, not only $\bf{Set}$. This practice of taking sentences in categories different of $\bf{Set}$ is common in logic, a notorious example being that of propostional logic where sentences are taken as free algebras.\\
\item Secondly, it introduces, albeit tacitly, the $2$-categorial ideas which will be used in the next section. Indeed, the idea of diagrams will be explored again in section \ref{Categories of institutions as Grothendieck categories}.
\end{itemize}

Let $C$ be a category. Denote $Diag_{co}(C)$ the category whose objects are pair $(A,F)$, where $F : A \to C$  is a covariant functor and such that $Hom((A,F), (A', F'))$ is the (meta)class of all pairs $(T, \alpha)$ where $T : A \to A'$ is a functor and $\alpha : F \rightarrow F' \circ T $ is a natural transformation. Let $id_{(A,F)} := (id_A,id_F)$ and if $(T', \alpha') \in Hom((A',F'), (A'', F''))$, then $(T',\alpha') \bullet (T, \alpha) := (T' \circ T, \alpha'_T \circ \alpha)$. $Diag_{mor}(C)$ denotes the category with the same objects as $Diag_{co}(C)$ and, for arrows, $(T, \alpha) \in Hom((A,F), (A',F'))$ iff $T : A \to A'$ is a functor and $\alpha : F' \circ T \to F$ is a natural transformation; identities are the same as in $Diag_{co}(C)$ and compositions are adapted accordingly: $(T',\alpha') \bullet (T, \alpha) := (T' \circ T, \alpha \circ \alpha'_T)$. 

Now consider the category $\pic$ and the obvious forgetful functor $U : \pic \to Diag_{co} (Set)$ given by:

\[\begin{tikzcd}
\pi\mathbf{Ins}_{co} \arrow[r, "U"] & Diag_{co}(Set)                                                           \\
{\langle \sig, Sen, (C_\Sigma)_{\Sigma\in|\sig|}\rangle} \arrow[d, "{\langle F,\alpha\rangle}"'{name=A}] \arrow[r, maps to,shorten <= 1.3mm,shorten >= 0.5mm] & {\langle\sig,Sen\rangle} \arrow[d, "{\langle F,\alpha \rangle}"{name=B}] \\
{\langle \sig', Sen',(C'_\Sigma)_{\Sigma\in|\sig|}\rangle} \arrow[r, maps to]                                      & {\langle\sig',Sen' \rangle}
\arrow[from=A,to=B,mapsto, shorten <= 19.5mm, shorten >= 10mm]
\end{tikzcd}\]

The main result of this subsection is that $U$ has a left adjoint $L : Diag_{co}(Set) \to \pic$ and a right adjoint $R : Diag_{co}(Set) \to \pic$. Thus $U : \pic \to Diag_{co}(Set)$ preserves all limits and all colimits.

We will provide just the definitions of the functors, since the proof of the universal properties are straightforward.

$L : Diag_{co}(Set) \to \pic$ is given by:
$L(A,F) := (A, F, (C^{min}_a)_{a \in |A|})$, where $C^{min}_a : P(F(a)) \to P(F(a))$ is such that:

$$ \Gamma \in P(F(a)) \mapsto C^{min}_a(\Gamma) := \Gamma$$ 

It is ease to see that $L(A,F)$  satisfies the coherence condition in the definition of $\pi$-institution.

The action of $L$  on morphisms is very simple: 
$$L(((A,F) \overset{(T, \alpha)}\to \ (A',F'))  \ = \ (A, F, (C^{min}_a)_{a \in |A|}) \overset{(T, \alpha)}\to \ (A',F', ({C'}^{min}_{a'})_{a' \in |A'|});$$

this clearly determines a morphism of $\pi$-institutions.

For each $(A,F) \in|Diag_{co}(Set)|$, we have the identity arrow $id_{(A,F)} : (A,F) \to U(L(A,F))$ and this is a initial object in the comma category $(A,F) \downarrow U$. Thus $L$ is left adjoint to $U$ and we have just described the component $(A,F)$ of the unity of this adjunction.

Similarly, we have a functor $R : Diag_{co}(Set) \to \pic$ with action $R(A,F) \coloneqq (A, F, (C^{max}_a)_{a \in |A|})$, where $C^{max}_a : P(F(a)) \to P(F(a))$ is such that:

$$ \Gamma \in P(F(a)) \mapsto C^{max}_a(\Gamma) \coloneqq F(a)$$

With the obvious action on arrows, $R$ becomes the right adjoint to $U$.\\

\begin{Obs}\label{smth}
Given category $C$ and a functor $C\xrightarrow{E}\mathbf{Set}$ with left adjoint $\mathbf{Set}\xrightarrow{\mathcal{L}}C$ (respec. right adjoint $\mathbf{Set}\xrightarrow{\mathcal{R}}C$) we can form $Diag_{co}(C) \xrightarrow{\tilde{E}} Diag_{co}(\mathbf{Set})$ and $Diag_{co}(\mathbf{Set})\xrightarrow{\tilde{L}}Diag_{co}(C)$ by composing: $$\tilde{E}((T, \alpha) : (A,F) \to (A',F')) = (T, E\alpha)$$ and likewise for $\tilde{\mathcal{L}}$ (respec. $\tilde{\mathcal{R}}$). It is straightforward that $\tilde{E}$ has as left adjoint $\tilde{\mathcal{L}}$ (respec. right adjoint $\tilde{\mathcal{R}}$). We can then compose this adjunction with the one obtained above to obtain $\begin{tikzcd}
            \pi\mathbf{Ins}_{co}\arrow[r, shift left=1ex, "\tilde{E}\circ U"{name=G}] & Diag_{co}(C) \arrow[l, shift left=.5ex, "L\circ \tilde{\mathcal{R}}"{name=F}]
            \arrow[phantom, from=F, to=G, , "\scriptscriptstyle\boldsymbol{\top}"].
        \end{tikzcd}$ (respec. $\begin{tikzcd}
            Diag_{co}(C)\arrow[r, shift left=1ex, "R\circ \tilde{E}"{name=G}] & \pi\mathbf{Ins}_{co}\arrow[l, shift left=.5ex, "\tilde{\mathcal{L}}\circ U"{name=F}]
            \arrow[phantom, from=F, to=G, , "\scriptscriptstyle\boldsymbol{\top}"]
        \end{tikzcd}$).
\end{Obs}

%\begin{Obs} 
%  {\gabriel{Atualizar essa se\c{c}\~{a}o pro caso onde temos um par de adjuntos $C\to Set \to C$. Isto geraria uma adjuncao $Diag_{co}(Set) \rightleftarrows Diag_{co}(C)$ que pode set composta com a adjuncao trabalhafda acima $\pi-inco \leftrightarrows Diag_{co}(Set)$, gerando adjuncao $\pi-inco \leftrightarrows Diag_{co}(C)$}}
% Notice that both of the results could be extended for categories adjoint to $\mathbf{Set}$. Indeed, given a category $C$ and some pair $C\xrightarrow{a}\mathbf{Set}\xrightarrow{a}$

%\\
%{\gabriel{Reescrevi essas contas no remark \ref{smth}}}\\
%$E : C \to Set$ com  adjunto a esquerda $L: Set \to C$, $L \vadsh E$ (respec. adjunto a direita $R : Set \to C$, $E \dashv R$)

%deve induz funtor $\tilde{E} : Diag_{co}(C) \to Diag_{co}(Set)$, $\tilde{E}((T, \alpha) : (A,F) \to (A',F')) = (T, E(\alpha))$, com  adjunto $\tilde{L}, \tilde{R}: Diag_{co}(Set) \to Diag_{co}(C) $, 
%$\tilde{L} \dashv \tilde{E} \dashv \tilde{R}$ entre categorias de diagramas 

%$(S, \gamma) \in Diag_{co}(Set)(\tilde{E}(A,F), (B,V)) $ iff $S: A \to B$, $\gamma : EF  \Rightarrow VS$

%$(S, \delta) \in Diag_{co}(C)((A,F), \tilde{R}(B,V))$ iff $S: A \to B$, $\delta : F \Rightarrow RVS$

%compondo $U$ com $\tilde{E}$ temos funtor com 2 adjuntos $\tilde{E} \circ U : \pi-Inst_{co} \to Diag_{co}(C)$

%\end{Obs}

{We summarize below the adjunctions previously presented. It can be described an analogous diagram for ``morphisms''  instead of ``co-morphism''.}

\[\xymatrix{
&\bf Inst_{co}\ar@<0ex>[d]&\\
\bf CAT\ar@<0ex>[r]&\bf \pi-Inst_{co}\ar@<1ex>[u]\ar@<1ex>[l]\ar@<0ex>[d]&\\
&Diag_{co}(Set)\ar@<1ex>[u]\ar@<-1ex>[u]\ar@{-->}[r]<1ex>\ar@{-->}[r]<-1ex>& Diag_{co}(C)\ar@<0ex>[l]
}\]

\section{Adjunctions at the level of room-like structures}

%{\darllan{Este diagrama pode ir no final da secao 2, junto com um paragrafo ligando sec 2 com a 3, 
%{\darllan{informando a importancia da adjuncao (equivalencia) da categoria das inst com $CAT((-)^{op},\mathbb{R}oom)$.  Essa equivalencia mostra a completude da categoria das inst, mostrando a importancia de analisar as instituicoes com um carater mais categorial atraves dos rooms. Destacar no inicio da sec 3 a parte original presente la.}}

%\daniel{aumentei a introducao dessa secao com mais detalhes sobre a ideia geral}

Accordding to \cite{Diac2}, page 47,

{\em ``The presentation of institutions as functors was given already in \cite{GB} and the 2-categorical structure of the category of institutions has been studied in \cite{Diac1} ."}

This section aims at describing how a standard construction from $2$-category theory $-$ the Grothendieck construction, which associates a fibration to a pseudofunctor in a $2$-functorial way $-$ allows us to reformulate the above adjunctions between categories of institution-like structures in a way which is general and systematic, and which provides conceptually clearer equivalent descriptions of the same phenomena. This is done in two main steps:
\begin{enumerate}
    \item We borrow from \cite{Diac2} the definition of the \emph{category of rooms}, denoted by $\Room$ $-$ which can be used to provide a concise description of the category of institutions\footnote{In \cite{Diac2}, this description is used to show that $\inm$ is a complete category.} $-$ and generalize it in a straightforward way (to categories of room-like structures), so as to obtain analogous reconstructions of categories of institution-like structures.
    
    \item By using the (non-trivial) facts that (a) the process of associating fibrations to pseudofunctors defines a $2$-categorical equivalence, and (b) the $2$-categorical Yoneda embedding is $2$-fully faithful, we are able to conclude that the $2$-functorial procedure (described below) which sends categories of room-like structures to categories of institution-like structures is also $2$-fully faithful. As a corollary, any $2$-categorical connections between categories of institution-like objects can be "pulled-back" to a corresponding construction at the level of room-like structures. For the purposes of this paper, we shall only be concerned with the particular case of recovering instutition-level adjunctions in terms of much simpler room-level adjunctions.
\end{enumerate}

The definition of categories of room-like objects is illustrated in terms of three archetypal examples: for institutions (as usual), for $\pi$-institutions (a direct analogous), and for the category of small categories (which turns out to be an extremal example).

It should also be remarked that the aforementioned procedure comes naturally, and quite generally, in two variants: one suitable for describing \emph{morphisms} between institution-like structures, and one suitable for describing \emph{comorphisms} between them.

%{\hugo{\\
%-dizer que eh uma analise (2)-categorail.\\
%-enfatizar a impornacia erstrutual (e inevitabelidade) dos rooms para institucoes e pi-instuicoes. (categorai completa de instituicos)\\
%-Avisar que $\pi$-room \'e novo. e que eh tema em dsenvolvimento\\
%eventualemnte finalizar secao  por novo daigrama de ligacoes ao nivel dos rooms}}\\

Before introducing the actual definitions, we outline as follows the background to be considered: as described in \cite{Diac2}, the category of institutions and morphisms can be obtained by means of a standard categorical construction often referred to as the \textit{Grothendieck construction}. There, a central role is played by the so-called \textit{category of rooms}, denoted by $\Room$: individually, an institution having $\sig$ as its category of signatures corresponds to a functor $\sig \longrightarrow \Room$; on the other hand, (co)morphisms of institutions should also take into account base-change functors between different categories of signatures. The Grothendieck construction provides an adequate framework for studying this kind of phenomena. More precisely, given a $1$-category $\mathcal C$ (regarded as a strict $2$-category with trivial $2$-cells), the Grothendieck construction, which we shall denote by $-^\sharp$, associates to each pseudofunctor $F:\mathcal C \longrightarrow \CAT$ a $1$-category $F^\sharp$ together with a structure (\textit{projection}) functor $F^\sharp \longrightarrow \mathcal C$ onto the base category. Most importantly, it constitutes a pseudofunctor

$$
-^\sharp:[\mathcal C,\CAT] \longrightarrow \CAT/\mathcal C,
$$

where:

\begin{itemize}
    \item $[\mathcal C,\CAT]$ denotes the $2$-category of pseudofunctors $\mathcal C \longrightarrow \CAT$, pseudonatural transformations, and modifications.
    
    \item $\CAT/\mathcal C$ denotes the slice $2$-category defined in the obvious way.
\end{itemize}

Our main interest will be the case where $\mathcal C$ is $\cat$, the $1$-category of categories. We shall also need to consider the $2$-categorical Yoneda (pseudo)functor

\begin{align*}
    Y:\textbf{C} & \longrightarrow [\textbf{C}^{op},\CAT]\\
    c & \longmapsto \textbf{C}(-,c)
\end{align*}

associated to a (possibly weak) $2$-category $\textbf{C}$, and variations thereof. A pseudofunctor equivalent to one of the form $\textbf{C}(-,c)$ is called a \textit{representable $2$-presheaf}. We will be concerned with (restrictions to $\CAT$ of) $2$-presheaves on a (suitably large) $2$-category of categories which are represented by variations of $\Room$. For instance, $\inc$ is described in \cite{Diac2} as the Grothendieck construction $\CAT(-^{op},\Room)^\sharp$ of the Yoneda-like $2$-presheaf $\CAT(-^{op},\Room)$ on $\CAT$. Our goal in this section will be to provide an alternative description of the above adjunctions between categories of institution-like structures (such as institutions and $\pi$-institutions), by noticing that (i) it is easy to describe $\Room$-like categories from which other categories of institution-like structures can be obtained through a similar Yoneda-followed-by-Grothendieck procedure, and (ii) the notion of adjunction is available for any $2$-category, and adjunctions in this sense are preserved by pseudofunctors.

As for categorical prerequisites, we restrict ourselves to providing quick (and mostly ad-hoc) descriptions of some of the necessary constructions from $2$-category theory, including the Grothendieck construction; hence the reader is strongly encouraged to have a prior basic knowledge on these topics. For that purpose, we refer to \cite{Diac2} and \cite{nLab} for a brief introduction, and to \cite{Jo} for a more detailed discussion.

The present section does not aim at completeness; instead, it consists in a brief introduction, including basic constructions a few functioning examples, to the idea of canonically producing new (resp. recovering well-known) $2$-categorical information on categories of institution-like structures in terms of their simpler counterparts: categories of room-like structures.

\subsection{$2$-categorical preliminaries}

We start by fixing some notations and defining the $2$-categorical constructions alluded to above. The basic language of $2$-category theory will be freely used. Unless otherwise specified, by a $2$-category we mean a \textit{strict} $2$-category. If $\mathcal C$ is a $1$-category, we regard it as a $2$-category whenever necessary. We denote by $\CAT$ the $2$-category of categories, functors, and natural transformations, and by $\cat$ the $1$-category of categories and functors. Given $2$-categories $\textbf C$ and $\textbf D$, we denote by $[\textbf C, \textbf D]$ the corresponding category of pseudofunctors, pseudonatural transformations, and modifications. If $\textbf C$ is a $2$-category, we denote by $\textbf C^{op}$ (resp. $\textbf C^{co}$, $\textbf C^{coop}$) the $2$-category obtained by reversing the $1$-cells (resp. $2$-cells, both $1$-cells and $2$-cells). By a contravariant pseudofunctor from $\textbf C$ to $\textbf D$ we mean a pseudofunctor $\textbf C^{op} \longrightarrow \textbf D$. By a $2$-presheaf (resp. category of $2$-presheaves) we mean a pseudofunctor $\textbf C^{op} \longrightarrow \CAT$ (resp. a $2$-category $[\textbf C^{op},\CAT]$).

\subsubsection{The Grothendieck construction}

The Grothendieck construction can be defined in two similar versions: taking as input either a contravariant $\CAT$-valued pseudofunctor (i.e. a $2$-presheaf), or a covariant one.

\begin{definition}
(Grothendieck construction for contravariant pseudofunctors)
\end{definition}
Let $\mathcal C$ be a $1$-category. Given a pseudofunctor $F:\mathcal C^{op} \longrightarrow \CAT$, we define its \textit{Grothendieck construction} or \textit{Grothendieck category}, denoted by $F^\sharp$, as the $1$-category given by the following data:

\begin{itemize}
    \item Its objects are pairs $(c,x)$, where $c \in Ob(\mathcal C)$ and $x \in Ob(F(c))$.
    
    \item An arrow $(c,x) \longrightarrow (d,y)$ is a pair $(f,\phi)$, where $f \in \mathcal C(c,d)$ and $\phi \in F(c)(x,Ff(y))$.
    
    \item The composite of morphisms $(f,\phi):(c,x)\longrightarrow (d,y)$ and $(g,\psi):(d,y)\longrightarrow (e,z)$ is defined as
    
    $$(g \circ f\text{  },\text{  } \alpha^{f,g}_z \circ F(f)(\psi)\circ \phi),$$
    
    where $\alpha^{f,g}$ is the natural isomorphism (associated to $F$ by the definition of a pseudofunctor) $F(f) \circ F(g) \implies F(g \circ f)$. See
    
    \[
    \begin{tikzcd}
    x \arrow{r}{\phi} & Ff(y) \arrow{r}{Ff(\psi)} & Ff(Fg(z))\! =\! (Ff \circ Fg)(z) \arrow{r}{\alpha^{f,g}_z} & F(g \circ f)(z).
    \end{tikzcd}
    \]
    
\end{itemize}

The reader will be able to check that composition is associative and that each object possesses an identity arrow (by using the natural isomorphisms $\alpha^c:1_{F(c)}\implies F(id_c)$). The category $F^\sharp$ is canonically endowed with a (\textit{projection}) functor $F^\sharp \longrightarrow \mathcal C$ given by $(c,x) \longmapsto c$ and $(f,\phi) \longmapsto f$.

Now, suppose given a $1$-cell in $[\mathcal C^{op}, \CAT]$, i.e. a pseudonatural transformation $\eta:F \implies G$. We define a functor $\eta^\sharp:F^\sharp \longrightarrow G^\sharp$ as follows:

\begin{itemize}
    \item $\eta^\sharp((c,x))=(c,\eta_c(x))$ for each $(c,x) \in Ob(F^\sharp)$.
    
    \item For each $(f,\phi):(c,x) \longrightarrow (d,y)$ in $F^\sharp$, we define $\eta^\sharp((f,\phi)):(c,\eta_c(x)) \longrightarrow (d,\eta_d(y))$ as
    
    $$(f \text{  },\text{  } \gamma^f_y \circ \eta_c(\phi)),$$
    
    where $\gamma^f$ is the natural isomorphism (associated to $\eta$ by the definition of a pseudonatural transformation) as in
    
    \[
    \begin{tikzcd}
    F(d) \arrow[]{r}{\eta_d} \arrow[swap]{d}{F(f)} & G(d) \arrow[Rightarrow, from=dl, "\gamma^f"] \arrow[]{d}{G(f)} \\
    F(c) \arrow[swap]{r}{\eta_c} & G(c). \\
    \end{tikzcd}
    \]
    See
    \[
    \begin{tikzcd}
    \eta_c(x) \arrow{r}{\eta_c(\phi)} & \eta_c(F(f)(y)) \arrow{r}{\gamma^f_y} & G(f)(\eta_d(y)).\\
    \end{tikzcd}
    \]
    
    The reader will be able to check that $\eta^\sharp$ is indeed a functor. Also, it is clear that it is compatible with the projections $F^\sharp \longrightarrow \mathcal C$ and $G^\sharp \longrightarrow \mathcal C$, so that we can regard $\eta^\sharp$ as a $1$-cell in the slice $2$-category $\CAT/\mathcal C$.
\end{itemize}

Finally, suppose given a $2$-cell in $[\mathcal C^{op},\CAT]$, i.e. a modification $\mu:\eta \Rrightarrow \chi$ between pseudonatural transformations $\eta$, $\chi:F \implies G$. We define a natural transformation $\mu^\sharp:\eta^\sharp \implies \chi^\sharp$ as follows: for each $(c,x) \in Ob(F^\sharp)$, we take

$$
\mu^\sharp_{(c,x)}: \eta^\sharp((c,x))=(c,\eta_c(x)) \longrightarrow \chi^\sharp((c,x))=(c,\chi_c(x))
$$

to be $(id_c,\beta^c_{\chi_c(x)} \circ (\mu_c)_x)$, where $\beta^c$ is the natural isomorphism (associated to $G$ by the definition of a pseudofunctor) $1_{G(c)} \implies G(id_c)$. See

\[
\begin{tikzcd}
\eta_c(x) \arrow[]{r}{(\mu_c)_x} & \chi_c(x) \arrow[]{r}{\beta^c_{\chi_c(x)}} & G(id_c)(\chi_c(x)).
\end{tikzcd}
\]

The reader will be able to check that $\mu^\sharp$ is indeed a natural transformation. Furthermore, it can be verified that by sending a pseudofunctor $F$ to a category $F^\sharp$, a pseudonatural transformation $\eta:F \implies G$ to a functor $\eta^\sharp:F^\sharp \longrightarrow G^\sharp$, and a modification $\mu:\eta \Rrightarrow \chi$ to a natural transformation $\mu^\sharp:\eta^\sharp \implies \chi^\sharp$, we have defined a pseudofunctor

$$
-^\sharp:[\mathcal C^{op},\CAT] \longrightarrow \CAT/\mathcal C.
$$

\begin{definition}
(Grothendieck construction for covariant pseudofunctors)
\label{def:grotconstrcovariant}
\end{definition}
Let $\mathcal C$ be a $1$-category. Given some pseudofunctor $F:\mathcal C \longrightarrow \CAT$, we define its \textit{Grothendieck construction} or \textit{Grothendieck category}, denoted by $F_\sharp$, as the $1$-category given by the following data:

\begin{itemize}
    \item Its objects are pairs $(c,x)$, where $c \in Ob(\mathcal C)$ and $x \in Ob(F(c))$.
    
    \item An arrow $(c,x) \longrightarrow (d,y)$ is a pair $(f,\phi)$, where $f \in \mathcal C(c,d)$ and $\phi \in F(d)(Ff(x),y)$.
    
    \item The composite of morphisms $(f,\phi):(c,x)\longrightarrow (d,y)$ and $(g,\psi):(d,y)\longrightarrow (e,z)$ is defined as
    
    $$(g \circ f\text{  },\text{  } \psi \circ F(g)(\phi) \circ (\alpha^{f,g}_x)^{-1}),$$
    
    where $\alpha^{f,g}$ is the natural isomorphism (associated to $F$ by the definition of a pseudofunctor) $F(f) \circ F(g) \implies F(g \circ f)$. See
    
    \[
    \begin{tikzcd}
    F(g \circ f)(x) \arrow[]{r}{(\alpha^{f,g}_x)^{-1}}
    & (Fg \circ Ff)(x)=Fg(Ff(x)) \arrow[]{r}{Fg(\phi)}
    & Fg(y) \arrow[]{r}{\psi}
    & z.
    \end{tikzcd}
    \]
    
\end{itemize}

The reader will be able to check that composition is associative and that each object possesses an identity arrow (by using the natural isomorphisms $\alpha^c:1_{F(c)}\implies F(id_c)$). As in the previous definition, $F_\sharp$ has a canonical projection functor $F_\sharp \longrightarrow \mathcal C$ given by $(c,x) \longmapsto c$ and $(f,\phi) \longmapsto f$. (Here, the reader might recognize it as what is called in the literature an \textit{opfibration}, or that it realizes $F_\sharp$ as an \textit{opfibered category} over $\mathcal C$).

Suppose given a $1$-cell in $[\mathcal C, \CAT]$, i.e. a pseudonatural transformation $\eta:F \implies G$. We define a functor $\eta_\sharp:F_\sharp \longrightarrow G_\sharp$ as follows:

\begin{itemize}
    \item $\eta_\sharp((c,x))=(c,\eta_c(x))$ for each $(c,x) \in Ob(F_\sharp)$.
    
    \item For each $(f,\phi):(c,x) \longrightarrow (d,y)$ in $F_\sharp$, we define $\eta_\sharp((f,\phi)):(c,\eta_c(x)) \longrightarrow (d,\eta_d(y))$ as
    
    $$(f \text{  },\text{  } \eta_d(\phi) \circ (\gamma^f_x)^{-1}),$$
    
    where $\gamma^f$ is the natural isomorphism (associated to $\eta$ by the definition of a pseudonatural transformation) as in
    
    \[
    \begin{tikzcd}
    F(d) \arrow[]{r}{\eta_d} \arrow[swap,leftarrow]{d}{F(f)} & G(d)  \arrow[leftarrow]{d}{G(f)} \\
    F(c) \arrow[swap]{r}{\eta_c} & G(c). \arrow[Rightarrow, from=ul, "\gamma^f"] \\
    \end{tikzcd}
    \]
    See
    \[
    \begin{tikzcd}
    G(f)(\eta_c(x)) \arrow[]{r}{(\gamma^f_x)^{-1}} & \eta_d(F(f)(x)) \arrow[]{r}{\eta_d(\phi)} & \eta_d(y).
    \end{tikzcd}
    \]

    The reader will be able to check that $\eta_\sharp$ is indeed a functor. Again, it is clearly compatible with the projections $F_\sharp \longrightarrow \mathcal C$ and $G_\sharp \longrightarrow \mathcal C$, so that we can regard $\eta_\sharp$ as a $1$-cell in the slice $2$-category $\CAT/\mathcal C$.
\end{itemize}

Suppose given a $2$-cell in $[\mathcal C,\CAT]$, i.e. a modification $\mu:\eta \Rrightarrow \chi$ between pseudonatural transformations $\eta$, $\chi:F \implies G$. We define a natural transformation $\mu_\sharp:\eta_\sharp \implies \chi_\sharp$ as follows: for each $(c,x) \in Ob(F_\sharp)$, we take

$$
(\mu_\sharp)_{(c,x)}: \eta_\sharp((c,x))=(c,\eta_c(x)) \longrightarrow \chi_\sharp((c,x))=(c,\chi_c(x))
$$

to be $(id_c, (\mu_c)_x \circ (\beta^c_{\eta_c(x)})^{-1})$, where $\beta^c$ is the natural isomorphism (associated to $G$ by the definition of a pseudofunctor) $1_{G(c)} \implies G(id_c)$. See

\[
\begin{tikzcd}
G(id_c)(\eta_c(x)) \arrow[]{r}{(\beta^c_{\eta_c(x)})^{-1}}
& \eta_c(x) \arrow[]{r}{(\mu_c)_x} & \chi_c(x).
\end{tikzcd}
\]

The reader will be able to check that $\mu_\sharp$ is indeed a natural transformation. As before, it can be verified that by sending a pseudofunctor $F$ to $F_\sharp$, a pseudonatural transformation $\eta:F \implies G$ to $\eta_\sharp:F_\sharp \longrightarrow G_\sharp$, and a modification $\mu:\eta \Rrightarrow \chi$ to $\mu_\sharp:\eta_\sharp \implies \chi_\sharp$, we have defined a pseudofunctor

$$
-_\sharp:[\mathcal C,\CAT] \longrightarrow \CAT/\mathcal C.
$$

\subsubsection{Representable pseudofunctors}

Let $\textbf C$ be a $2$-category. For each $c \in Ob(\textbf C)$, we define a pseudofunctor (in fact, a strict $2$-functor) $\textbf C(-,c): \textbf C^{op} \longrightarrow \CAT$ as follows:

\begin{itemize}
    \item Each $d \in Ob(\textbf C)$ is sent to the hom-category $\textbf C(d,c)$.
    
    \item Each $1$-cell $f:d \longrightarrow e$ in $\textbf C$ is sent to the functor $\textbf C(f,c):\textbf C(e,c) \longrightarrow \textbf C(d,c)$ given by precomposition of both $1$-cells and $2$-cells with $f$.
    
    \item Each $2$-cell $\eta:f \implies g$ between $1$-cells $f$, $g:d \longrightarrow e$ is sent to the natural transformation
    
    $$
    \textbf C(\eta,c):\textbf C(f,c) \implies \textbf C(g,c)
    $$
    
    given by precomposition with $\eta$, that is, by associating to each $1$-cell $h:e \longrightarrow c$ (i.e. object of $\textbf C(e,c)$) the $2$-cell (i.e. morphism of $\textbf C(d,c)$)
    
    $$
    \textbf C(\eta,c)_h=h \circ \eta: h \circ f \longrightarrow h \circ g.
    $$
    \end{itemize}

Next, given a $1$-cell $p:c \longrightarrow c'$ in $\textbf C$, we define a pseudonatural transformation (in fact, a strict $2$-natural transformation) $\textbf C(-,p):\textbf C(-,c) \implies \textbf C(-,c')$ as follows:

\begin{itemize}
    \item To each $d \in Ob(\textbf C)$ we associate the functor (i.e. $1$-cell in $\CAT$) $\textbf C(d,p):\textbf C(d,c) \longrightarrow \textbf C(d,c')$ given by postcomposition of both $1$-cells and $2$-cells with $f$.
    
    \item As we are only dealing with strict $2$-categories, composition of $1$-cells in $\textbf C$ is strictly associative, hence we can fill the square diagrams thus obtained with identity natural transformations.
\end{itemize}

Given a $2$-cell $\eta:p \implies p'$ between $p$, $p':c \longrightarrow c'$, we define a modification $\textbf C(-,\eta):\textbf C(-,p) \Rrightarrow \textbf C(-,p')$ by associating to each $d \in Ob(\textbf C)$ the natural transformation $\textbf C(d,\eta):\textbf C(d,p) \implies \textbf C(d,p')$ given on each $f \in Ob(\textbf C(d,c))$ by $\textbf C(d,\eta)_f=\eta \circ f:p \circ f \longrightarrow p' \circ f$.

Routine diagram chasing shows that the above constructions define a strict $2$-functor $\textbf C \longrightarrow [\textbf C^{op},\CAT]$, which we denote by $\mathcal Y_\textbf C$ and call the \textit{Yoneda embedding} associated to $\textbf C$.

\begin{Obs}
The above constructions can be adapted to produce a Yoneda embedding for any \textit{weak} $2$-category $\textbf C$. In this case, $\mathcal Y_\textbf C$ will in general only be a (non-strict) pseudofunctor. Also, the term \textit{embedding} used here may be misleading in that the $2$-categorical statement analogous to the Yoneda lemma, although true, is not nearly immediate from the above discussion. An elementary but not-so-short proof is given in \cite{Bak1}.
\end{Obs}

\subsubsection{Adjunctions in a $2$-category}

\begin{definition}
Let $\textbf C$ be a $2$-category. An \textit{adjunction} in $\textbf C$ is a quadruple $(f,g,\eta,\varepsilon)$, where:

\begin{itemize}
    \item $f$ and $g$ are $1$-cells in $\textbf C$ of the form $f:c \longrightarrow d$, $g:d \longrightarrow c$.
    
    \item $\eta$ and $\varepsilon$ are $2$-cells of the form $\eta:id_c \implies g \circ f$, $\varepsilon:f \circ g \implies id_d$.
    
    \item These satisfy the identities $(\varepsilon f) \circ (f \eta) = 1_f$ and $(g \varepsilon)\circ(\eta g)=1_g$.
\end{itemize}

We denote the existence of such an adjunction by $f \dashv g$.
\end{definition}

For our purposes, the crucial property of adjunctions in $2$-categories is that they are (up to isomorphism) preserved by any pseudofunctor:

\begin{lemma}
\label{lem:adjunctionsarepreservedbypseudofunctors}
Let $F:\textbf C \longrightarrow \textbf D$ be a pseudofunctor, and $(f,g,\eta,\varepsilon)$ an adjunction in $\textbf C$. Then $F$ induces an adjunction $(F(f),F(g),\bar{\eta},\bar{\varepsilon})$ in $\textbf D$.
\end{lemma}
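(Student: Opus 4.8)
The plan is to manufacture the unit and counit of the induced adjunction from the structural isomorphisms of the pseudofunctor, and then to transport the triangle identities of $(f,g,\eta,\varepsilon)$ across those isomorphisms. Write $\varphi_{g,f}:F(g)\circ F(f)\implies F(g\circ f)$ for the compositor of $F$ on a composable pair $c\xrightarrow{f}d\xrightarrow{g}e$, and $\varphi_c:id_{Fc}\implies F(id_c)$ for its unitor at $c$; by definition of a pseudofunctor these are invertible $2$-cells in $\textbf D$, natural in their arguments and subject to the associativity and unit coherence axioms. First I would set
\[
\bar\eta := \varphi_{g,f}^{-1}\circ F(\eta)\circ \varphi_c : id_{Fc}\implies F(g)\circ F(f)
\]
and
\[
\bar\varepsilon := \varphi_d^{-1}\circ F(\varepsilon)\circ \varphi_{f,g} : F(f)\circ F(g)\implies id_{Fd},
\]
which are well typed because $F(\eta):F(id_c)\implies F(g\circ f)$ and $F(\varepsilon):F(f\circ g)\implies F(id_d)$, and because $\varphi_{g,f}$ and $\varphi_d$ are invertible.

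Next I would verify the first triangle identity $(\bar\varepsilon\, F(f))\circ(F(f)\,\bar\eta)=1_{F(f)}$, the second being entirely analogous. The guiding idea is that, modulo the structural isomorphisms, the whiskering $F(f)\,\bar\eta$ agrees with $F$ applied to the whiskering $f\eta$, and likewise $\bar\varepsilon\, F(f)$ agrees with $F(\varepsilon f)$, so that the whole composite collapses to $F\bigl((\varepsilon f)\circ(f\eta)\bigr)$. I would carry this out by expanding both whiskered $2$-cells through the definitions of $\bar\eta$ and $\bar\varepsilon$ and then invoking three facts: (i) that each hom-functor of $F$ preserves vertical composition and identities, so in particular $F\bigl((\varepsilon f)\circ(f\eta)\bigr)=F(1_f)=1_{F(f)}$; (ii) the naturality of the compositors $\varphi_{-,-}$ with respect to the $2$-cells $\eta$ and $\varepsilon$; and (iii) the coherence axioms of $F$, which are exactly what is needed so that the compositors and unitors introduced by the two whiskerings pair off with their inverses and cancel.

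The main obstacle is precisely the coherence bookkeeping in steps (ii) and (iii): the two whiskerings produce compositor isomorphisms attached to the triple composite $f\circ g\circ f$, and one must position them so that the associativity axiom (the pentagon comparing the two ways of assembling $\varphi$ on $f,g,f$) and the unit axiom (comparing $\varphi_c$ and $\varphi_d$ with the identity compositors) apply and leave every structural isomorphism beside its inverse. This is clearest in string-diagram notation, where whiskering, naturality, and the triangle identity become planar isotopies and the coherence axioms allow the compositor ``beads'' to be slid together and annihilated; granting that formalism, the remaining computation is routine. Running the same argument with the roles of $f$ and $g$ interchanged yields the second triangle identity, so that $(F(f),F(g),\bar\eta,\bar\varepsilon)$ is an adjunction in $\textbf D$, proving $F(f)\dashv F(g)$.
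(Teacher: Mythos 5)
Your proposal is correct and follows essentially the same route as the paper's proof: you define $\bar\eta$ and $\bar\varepsilon$ by exactly the same composites of the unitors, compositors, and $F(\eta)$, $F(\varepsilon)$, and you verify the triangle identities by transporting $(\varepsilon f)\circ(f\eta)=1_f$ through the hom-functors of $F$ and cancelling the structural $2$-cells via naturality of the compositors and the coherence axioms. The paper's two displayed auxiliary equalities are precisely your guiding claim that, modulo the compositors, $F(f)\,\bar\eta$ agrees with $F(f\eta)$ and $\bar\varepsilon\,F(f)$ with $F(\varepsilon f)$, and both arguments leave the same coherence bookkeeping to the reader.
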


\begin{proof}
Let $f:c \longrightarrow d$, $g:d \longrightarrow c$. Take $\bar{\eta}:id_{F(c)} \implies F(g) \circ F(f)$ to be the composite

$$
id_{F(c)} \overset{\alpha^c}{\implies} F(id_c) \overset{F(\eta)}{\implies} F(g \circ f) \overset{(\alpha^{g,f})^{-1}}{\implies} F(g) \circ F(f),
$$

where $\alpha^c$ and $\alpha^{g,f}$ are the $2$-cells associated to $F$ as a pseudofunctor. Analogously, take $\bar{\varepsilon}:F(f) \circ F(g) \implies id_{F(d)}$ to be the composite

$$
F(f) \circ F(g) \overset{\alpha^{f,g}}{\implies} F(f \circ g) \overset{F(\varepsilon)}{\implies} F(id_d) \overset{(\alpha^d)^{-1}}{\implies} id_{F(d)}.
$$

Now, notice that

\begin{align*}
    (\bar{\varepsilon}F(f))\circ (F(f)\circ \bar{\eta}) & = (((\alpha^d)^{-1}F(\varepsilon)\alpha^{f,g})F(f))\circ (F(f)( (\alpha^{g,f})^{-1}F(\eta) \alpha^c ))
\end{align*}

is given by the following composite of $2$-cells:

$$
F(f) \overset{F(f)\alpha^c}{\implies} F(f)\circ F(id_c) \overset{F(f)F(\eta)}{\implies} F(f)\circ F(g \circ f) \overset{F(f)(\alpha^{g,f})^{-1}}{\implies} F(f) \circ F(g) \circ F(f) \implies
$$

$$
\overset{\alpha^{f,g}F(f)}{\implies} F(f \circ g) \circ F(f) \overset{F(\varepsilon)F(f)}{\implies} F(id_d) \circ F(f) \overset{(\alpha^d)^{-1}F(f)}{\implies} F(f).
$$

On the other hand, the equality $(\varepsilon f) \circ (f \eta) = 1_f$ implies (by functoriality of $\textbf C(c,d) \longrightarrow \textbf D(F(c),F(d))$) $F(\varepsilon f) \circ F(f \eta) = 1_{F(f)}$. The left-hand side equals the composite of $2$-cells

$$
F(f) \overset{F(f\eta)}{\implies} F(f \circ g \circ f) \overset{F(\varepsilon f)}{\implies} F(f),
$$

which (by expanding $id_{F(f \circ g \circ f)}$ through the coherence laws of $F$ as a pseudofunctor) can be rewritten as

$$
F(f) \overset{F(f\eta)}{\implies} F(f \circ g \circ f) \overset{(\alpha^{f,g \circ f})^{-1}}{\implies} F(f) \circ F(g \circ f) \overset{F(f)(\alpha^{g,f})^{-1}}{\implies} F(f) \circ F(g) \circ F(f) \implies
$$

$$
\overset{\alpha^{f,g}F(f)}{\implies} F(f \circ g) \circ F(f) \overset{\alpha^{f \circ g,f}}{\implies} F(f \circ g \circ f) \overset{F(\varepsilon f)}{\implies} F(f).
$$

Again by using the coherence laws of $F$, it can be shown (as the reader will be able to do in detail) that the following equalities hold:

$$
(F(f)F(\eta))\circ (F(f)\alpha^c) = (\alpha^{f,g \circ f})^{-1} \circ F(f\eta):F(f) \implies F(f) \circ F(g \circ f),
$$

$$
((\alpha^d)^{-1}F(f)) \circ (F(\varepsilon)F(f)) =
F(\varepsilon f) \circ \alpha^{f \circ g,f}: F(f \circ g) \circ F(f) \implies F(f).
$$

It follows that the two composites of $2$-cells above are equal, so that $(\bar{\varepsilon}F(f))\circ (F(f)\circ \bar{\eta})=1_{F(f)}$, which is the first desired identity. The second one can be shown analogously.
\end{proof}

\subsection{Generalized categories of institution-like structures}\label{Categories of institutions as Grothendieck categories}

\cite{Diac2} describes a procedure to recover $\inm$ as a Grothendieck category. It is done by introducing the so-called \textit{category of rooms}, denoted by $\Room$ (see below), so that $\inm$ is canonically equivalent (isomorphic, in fact) to $\CAT((-)^{op},\Room)^\sharp$. Before recalling this construction, it will be convenient to define (or better, to fix notation for) a general notion of $\Room$-like category which can be applied to produce other categories of institution-like objects.

\begin{definition}
\label{def:roomcategory}
\end{definition}
Let $\mathcal C$ be a $1$-category. We say that a $1$-category $R$ is a \textit{category of rooms} for $\mathcal C$ if there exists an equivalence of categories $\mathcal C \simeq \CAT(-^{op},R)^\sharp$, where the right-hand side denotes the category obtained as in

\[
\begin{tikzcd}[row sep=tiny]
    \CAT & \left[ \CAT^{op},\CAT'\right] & \left[ \cat^{op},\CAT' \right] & \CAT'/\cat \\
    \vertin & \vertin & \vertin & \vertin \\
    R \arrow[mapsto]{r}{} & \CAT(-^{op},R) \arrow[mapsto]{r}{} & \CAT(-^{op},R) \arrow[mapsto]{r}{} & \CAT(-^{op},R),^\sharp \\
\end{tikzcd}
\]

where we denote by $\CAT'$ a $2$-category of categories defined in a Grothendieck universe larger than that of $\CAT$. As discussed in the previous subsection, both the Yoneda embedding for $2$-categories and the Grothendieck construction are pseudofunctorial. It is then immediate that the above construction gives rise to a pseudofunctor (in fact, a strict $2$-functor)

\begin{align*}
    \CAT & \longrightarrow \CAT'/\cat\\
    R & \longmapsto \CAT(-^{op},R)^\sharp.
\end{align*}

It will be denoted by $\textbf{ins}$ and called \textit{institutional realization}.

It often happens that the right Grothendieck construction to be used is that from Definition \ref{def:grotconstrcovariant}, for covariant pseudofunctors. We say that $R$ is a \textit{category of co-rooms} for $\mathcal C$ if there exists an equivalence of categories $\mathcal C \simeq (\CAT(-^{op},R)_\sharp)^{op}$. See

\[
\begin{tikzcd}[column sep=tiny, row sep=tiny]
    \CAT & \left[ \CAT^{op},\CAT'\right] & \left[ \cat^{op},\CAT' \right] & \CAT'/\cat^{op} & \CAT'^{co}/\cat \\
    \vertin & \vertin & \vertin & \vertin & \vertin \\
    R \arrow[mapsto]{r}{} & \CAT(-^{op},R) \arrow[mapsto]{r}{} & \CAT(-^{op},R) \arrow[mapsto]{r}{} & \CAT(-^{op},R)_\sharp \arrow[mapsto]{r}{} & (\CAT(-^{op},R)_\sharp).^{op}
\end{tikzcd}
\]

Once again, we obtain a pseudofunctor (in fact, a strict $2$-functor)

\begin{align*}
    \CAT & \longrightarrow \CAT'^{co}/\cat\\
    R & \longmapsto (\CAT(-^{op},R)_\sharp)^{op},
\end{align*}

which we denote by $\textbf{coins}$ and call \textit{institutional co-realization}.

\begin{Obs}
It is clear that $\CAT$ plays no distinguished role in this construction besides being a $2$-category. The inner $op$ as in $\CAT(-^{op},R)$ and $(\CAT(-^{op},R)_\sharp)^{op}$ corresponds (see Example \ref{example room tradicional}) to the fact that we wish the functors sending signatures to categories of models to be contravariant. The outer $op$ as in $(\CAT(-^{op},R)_\sharp)^{op}$ (as well as its absence from $\CAT(-^{op},R)$) corresponds to the fact that we wish any morphism between institution-like objects to have the same direction as its corresponding functor between signature categories. The $co$ as in $\CAT'^{co}/\cat$ is due to the fact that the pseudofunctor taking a category to its opposite reverses the direction of natural transformations, but not of functors. Since left-right adjunctions in $\CAT'$ correspond to right-left adjunctions in $\CAT'^{co}$, Lemma \ref{lem:adjunctionsarepreservedbypseudofunctors} implies that $\textbf{coins}$ sends left-right adjunctions in $\CAT$ to right-left adjunctions in $\CAT'^{co}/\cat$.
\end{Obs}

We list below some examples of room categories for some categories of institution-like objects. Proofs will not be given, but the reader will be able to provide them without difficulty.

\begin{example}
\label{example room tradicional}
($\Room$, a room category for $\inm$ and $\inc$)
\end{example}

Define a category $\Room$ as follows:

\begin{itemize}
    \item Its objects are triples $\langle S,M,(R_m)_{m\in Ob(M)}\rangle$, where $S$ is a set, $M$ is a category, and, for each $m\in Ob(M)$, $R_m:S\to 2=\{0,1\}$ is a function.
    
    \item A morphism $\langle S,M,(R_m)_{m\in Ob(M)} \rangle \xrightarrow{(\sigma,\mu)}\langle S',M',(R'_{m'})_{m'\in Ob(M')}\rangle$ consists of a function $\sigma:S'\to S$ and a functor $\mu:M\to M'$ such that $R'_{\mu m}(s) = R_m\sigma(s)$ for every $m\in Ob(M)$ and $s\in Ob(S)$.
    
    \item Composition is given by $(\sigma',\mu')\circ (\sigma,\mu)=(\sigma \circ \sigma',\mu' \circ \mu)$.
\end{itemize}

It is clear that $\Room$ is indeed a category. Then, in the terminology introduced above, we have

$$
\inm \cong \textbf{ins}(\Room),
$$

$$
\inc \cong \textbf{coins}(\Room).
$$

Both projections $\textbf{ins}(\Room) \longrightarrow \cat$ and $\textbf{coins}(\Room) \longrightarrow \cat$ recover the underlying category of signatures of an institution. For more on this example, we refer the reader to \cite{Diac2}.

\begin{example}
($\pi\Room$, a room category for $\pi\mathbf{Ins}_{mor}$ and $\pi\mathbf{Ins}_{co}$)
\end{example}

Define a category $\pi\Room$ as follows:

\begin{itemize}
    \item Its objects are pairs $\langle S, C \rangle$, where $S$ is a set and $C:2^S \longrightarrow 2^S$ is a closure operator (we give $2^S \cong \mathscr P(S)$ the canonical ordering).
    
    \item A morphism $\langle S,C \rangle \overset{\sigma}{\longrightarrow} \langle S',C' \rangle$ consists of a function $\sigma:S' \longrightarrow S$ such that $\sigma^* \circ C = C' \circ \sigma^*$, where $\sigma^*:2^S \longrightarrow 2^{S'}$ is the function given by pulling back along $\sigma$ (or by taking preimages).
    
    \item Composition is given by $\sigma' \circ_{\pi\Room} \sigma = \sigma \circ_{Set} \sigma'$.
    \end{itemize}

It is clear that $\pi\Room$ is indeed a category. It is easily shown that

$$
\pi\mathbf{Ins}_{mor} \cong \textbf{ins}(\pi\Room),
$$

$$
\pi\mathbf{Ins}_{co} \cong \textbf{coins}(\pi\Room).
$$

Both projections $\textbf{ins}(\pi\Room) \longrightarrow \cat$ and $\textbf{coins}(\pi\Room) \longrightarrow \cat$ recover the underlying category of signatures of a $\pi$-institution.

\begin{example}
(The terminal category, a room category for $\cat$)
\end{example}

Let $1=\{*\}$ denote the terminal category. It is immediate that both $\textbf{ins}(1)$ and $\textbf{coins}(1)$ are canonically isomorphic to $\cat$ via the projections provided by the Grothendieck construction.

\begin{example}
(Institution-like structures versus diagrams)
\end{example}

\textbf{ins} and \textbf{coins} are essentially the same, respectively, as the constructions of categories of diagrams $Diac_{mor}$ and $Diag_{co}$ given (in an ad hoc way) in Section 2. Indeed, for any category $\mathcal C$ there are canonical isomorphisms of categories
$$
\textbf{ins}(\mathcal C) \cong Diag_{mor}(\mathcal C^{op}),
$$
$$
\textbf{coins}(\mathcal C) \cong Diag_{co}(\mathcal C^{op}),
$$
both given on objects by sending a pair $(\mathcal A, F:\mathcal A \rightarrow \mathcal C)$ to $(\mathcal A, F^{op}:\mathcal A \rightarrow \mathcal C^{op})$.

Moreover, for each $\mathcal C$ we have an isomorphism
$$
Diag_{mor}(\mathcal C) \cong Diag_{co}(\mathcal C^{op})
$$
also given by sending a pair $(\mathcal A, F:\mathcal A \rightarrow \mathcal C)$ to $(\mathcal A, F^{op}:\mathcal A \rightarrow \mathcal C^{op})$. It then follows that for each $\mathcal C$ we have a sequence of isomorphisms
$$
\textbf{ins}(\mathcal C) \cong Diag_{mor}(\mathcal C^{op}) \cong Diag_{co}(\mathcal C) \cong \textbf{coins}(\mathcal C^{op}).
$$
An immediate corollary of this is:
\begin{itemize}
    \item $\Room^{op}$ (resp. $\pi\Room^{op}$) is a category of rooms for $\inc$ (resp. $\pi\inc$).
    \item $\Room^{op}$ (resp. $\pi\Room^{op}$) is a category of co-rooms for $\inm$ (resp. $\pi\inm$).
\end{itemize}

Although the constructions of categories of diagrams and of institutional realizations are equally expressive, \textbf{ins} and \textbf{coins} fit better into the institutional framework, while $Diag_{mor}$ and $Diac_{co}$ would be more natural from a general categorical point of view.

\subsection{Recovering adjunctions between categories of ($\pi$-)institutions}

Lemma \ref{lem:adjunctionsarepreservedbypseudofunctors} ensures us that $\textbf{ins}$ preserves adjunctions, and that $\textbf{coins}$ reverses adjunctions. As a result, the adjunctions between categories of institution-like objects described in the previous sections can be given a simple and uniform treatment as images under $\textbf{ins}$ or $\textbf{coins}$ of certain adjunctions between the room categories attributed to them in the previous subsection.

\begin{example}
($\inm$ and $\pi\mathbf{Ins}_{mor}$)
\end{example}

Define functors $\mathscr F:\Room \longrightarrow \pi\Room$ and $\mathscr G:\pi\Room \longrightarrow \Room$ as follows:

\begin{itemize}
    \item For each object $r=\langle S, M, (R_m)_{m \in Ob(M)} \rangle$ of $\Room$, we define $\mathscr F(r)$ as $\langle S, C^r \rangle$, where $C^r:\mathscr P(S) \longrightarrow \mathscr P(S)$ is given by sending each $S' \subset S$ to
    
    $$
    \{s \in S \text{ such that } R_m(s)=1 \text{ for every } m \in Ob(M) \text{ such that } R_m(S')=\{1\} \}.
    $$
    
    A morphism $\langle S,M,(R_m)_{m\in Ob(M)} \rangle \xrightarrow{(\sigma,\mu)}\langle S',M',(R'_{m'})_{m'\in Ob(M')}\rangle$ is sent to $\sigma$.
    
    \item For each object $r\!=\!\langle S, C \rangle$ of $\pi\Room$, we define $\mathscr G(r)$ as $\langle S, \mathscr P(S), (\chi_m)_{m \in Ob(\mathscr P(S))} \rangle$, where $\mathscr P(S)$ is given the structure of a co-discrete category, and for each $m \subset S$, $\chi_m:S \longrightarrow 2$ is the characteristic function of $m$.
    
    A morphism $\langle S, C \rangle \overset{\sigma}{\longrightarrow} \langle S', C' \rangle$ is sent to $(\sigma,\sigma^*)$, where $\sigma^*:\mathscr P(S) \longrightarrow \mathscr P(S')$ is the functor between co-discrete categories given on objects by taking preimages.
\end{itemize}

One can then easily describe an adjunction $\mathscr G \dashv \mathscr F$ and show that $\mathscr G$ is fully faithful (hence it realizes $\pi\Room$ as a coreflective subcategory of $\Room$). It follows from Lemma \ref{lem:adjunctionsarepreservedbypseudofunctors}, and from the fact that pseudofunctors preserve isomorphisms between $1$-cells, that the functors

$$
\textbf{ins}(\mathscr F):\textbf{ins}(\Room) \cong \inm \longrightarrow \textbf{ins}(\pi\Room) \cong \pi\mathbf{Ins}_{mor},
$$
$$
\textbf{ins}(\mathscr G):\textbf{ins}(\pi\Room) \cong \pi\mathbf{Ins}_{mor} \longrightarrow \textbf{ins}(\Room) \cong \inm
$$

satisfy $\textbf{ins}(\mathscr G) \dashv \textbf{ins}(\mathscr F)$, and that $\textbf{ins}(\mathscr G)$ realizes $\textbf{ins}(\pi\Room)$ (resp. $\pi\mathbf{Ins}_{mor}$) as a coreflective subcategory of $\textbf{ins}(\Room)$ (resp. $\inm$).

\begin{example}
($\inc$ and $\pi\mathbf{Ins}_{co}$)
\end{example}

Let $\mathscr F$ and $\mathscr G$ be as in the previous example. The same argument shows that the functors

$$
\textbf{coins}(\mathscr F):\textbf{coins}(\Room) \cong \inc \longrightarrow \textbf{coins}(\pi\Room) \cong \pi\mathbf{Ins}_{co},
$$
$$
\textbf{coins}(\mathscr G):\textbf{coins}(\pi\Room) \cong \pi\mathbf{Ins}_{co} \longrightarrow \textbf{coins}(\Room) \cong \inc
$$

satisfy $\textbf{coins}(\mathscr F) \dashv \textbf{coins}(\mathscr G)$, and that $\textbf{coins}(\mathscr G)$ realizes $\textbf{coins}(\pi\Room)$ (resp. $\pi\mathbf{Ins}_{co}$) as a reflective subcategory of $\textbf{coins}(\Room)$ (resp. $\inc$).

\begin{example}
(Categories of ($\pi$-)institutions and $\cat$)
\end{example}

We leave to the reader the exercise of defining adjoints (left, right, or both) to the terminal functors $\Room \to 1$ and $\pi\Room \to 1$ using the methods described here, in order to produce several canonical adjunctions between $\cat$ and categories of ($\pi$-)institutions.

\begin{example}
(Categories of ($\pi$-)institutions and categories of diagrams)
\end{example}

Any adjunction of the form
\[\begin{tikzcd}
	{\Room} && {\mathcal C^{op}}
	\arrow["{R}"{name=0}, from=1-3, to=1-1, bend left]
	\arrow["{L}"{name=1}, from=1-1, to=1-3, bend left]
	\arrow["\dashv"{rotate=-90}, from=1, to=0, phantom]
\end{tikzcd}
\]
    induces two adjunctions: one between $\inm = \textbf{ins}(\Room)$ and $\textbf{ins}(\mathcal C^{op}) \cong Diag_{mor}(\mathcal C)$, and one between $\inc = \textbf{coins}(\Room)$ and $\textbf{coins}(\mathcal C^{op}) \cong Diag_{co}(\mathcal C)$. Analogously, an adjunction of the form
\[\begin{tikzcd}
	{\Room} && {\mathcal C}
	\arrow["{R}"{name=0}, from=1-3, to=1-1, bend left]
	\arrow["{L}"{name=1}, from=1-1, to=1-3, bend left]
	\arrow["\dashv"{rotate=-90}, from=1, to=0, phantom]
\end{tikzcd}
\]
induces an adjunction between $\inm = \textbf{ins}(\Room)$ and $\textbf{ins}(\mathcal C) \cong Diag_{co}(\mathcal C)$, and another one between $\inc = \textbf{coins}(\Room)$ and $\textbf{coins}(\mathcal C) \cong Diag_{mor}(\mathcal C)$. Analogously for $\pi\Room$ (or any category whatsoever) in place of $\Room$, and for $R \dashv L$ in place of $L \dashv R$.

%{\color{red}{Daniel, fiz umas contas preliminares e acho que o $\pi\Room \to 1$ no n\'ivel $\pic\to\cat$ \'e o forgetful mas n\~ao tenho certeza... No caso afirmativo acho que essas adjun\c{c}\~{o}es s\~{a}o as descritas na \ref{adj cat pic}.}}\\

%{\color{red}{Daniel aqui: infelizmente, a partir de daqui a pouco dificilmente conseguirei colocar mais coisas, pelo resto do dia. Mas a ideia realmente e recuperar as adjuncoes de 2.3. Acredito que o ponto essencial seja que todas as adjuncoes sao "espelhadas", como sugerido no Remark 2.5: qualquer adjuncao esquerda-direita no nivel de rooms produz simultaneamente uma adjuncao esquerda-direita para comorfismos, e uma adjuncao direita-esquerda para morfismos.}}\\

{\color{red}{}}

\section{Propositional logics and ($\pi$-)institutions}

%{\hugo{\\
%- talvez puxar uma parte dos final remarks para ca\\
%- mitas formas de ligaco (em diferntes nvies) ente logica proosocionsi e expansoes (pares filtro)  e instituicoes e piinstuicoes: algumas decorem diretamente do trablho executado eanteriormente sobe adjuncoes entre pi-institucos (teoria da provavilidade abstrada) einstituicoes (teorai dos modelos avstrata), outras sao genuinamsente noass. Contar aqui uma ligcao, descricao mais detallhada entre as subsecoes\\
%- interpolacao de craig, multialgras\\
%- }}

In this section, we present several different ways of connecting abstract propositional logics to institutions and $\pi$-institutions. 

%In the subsections \ref{pi-inst-examples} and \ref{instprop} we built, respectively, a $\pi$-institution and an institution for the category of propositional logics. Note that the institution associated with the $\pi$-institution in \ref{pi-inst-examples}, by the adjunction in the section 2, is not the same of the one built in \ref{instprop}. 

In subsection 4.1 we have described the  $\pi$-institutions associated to  categories of abstract propositional logics and some forms of translation morphisms,  as developed in \cite{MaPi1}. This naturally  lead us to search  an analogous ``model-theoretical" version of it that is different from the canonical one i.e., that  obtained by applying the functor $G : \pic \rightarrow \inc$ (see subsections 1.3 and 2.1). This is achieved in section 4.2, based on the development made in the section 3.1 of \cite{MaPi3}: we   provide (another) \emph{institutions} for each   category of propositional logics, through the use of the notion of a {\em matrix} for a propositional logic. It should be mentioned that the use of institutional-theoretic devices are useful for establishing an abstract Glivenko's theorem for algebraizable logics regardless of their particular signatures associated (see \cite{MaPi3}).

In \cite{AMP1} was introduced the concept of (finitary) filter pair, that can be seem as a categorial presentation of a propositional logic, in fact the category of logics is isomorphic to a coreflective subcategory of the category of filter pairs. In the subsection \ref{FilterPairInst} we present a functor $\mathcal{F}i \to \inm$, from the category of filter pairs, $\mathcal{F}_i$,
to the category of all institutions and morphisms, $\inm$. This is qualitatively  different connection from  the obtained in subsections \ref{pi-inst-examples} and \ref{instprop} between propositional logic and ($\pi$-)institution. From the adjunctions between the categories of logics and of filter pairs, ${\cal{L}} \leftrightarrows {\cal{F}}i$, and the adjunction between the categories of institutions and of $\pi$-institutions, $\pi-\inm \leftrightarrows \inm$, we obtain directly functors: $\mathcal{F}i \to \pi-\inm$, ${\cal L} \to \inm$, ${\cal L} \to \pi-\inm$. We finish this section with some remarks, indicating some generalizations concerning the use of multialgebras (a concept that will appear again in Section 5) in the setting of abstract propositional logic, including a natural generalization of the notion of filter pairs.

%some institutions and $\pi$-institutions of abstract propositional logics, useful for establishing an abstract Glivenko's theorem for algebraizable logics regardless of their signatures associated (see \cite{MaPi3}).  We have also defined the institution of filter pairs, a abstract logic notion introduced in \cite{AMP1}, and provided a functor from the category of filter pair to the category of institutions.

   %- mitas formas de ligaco (em diferntes nvies) ente logica proosocionsi e expansoes (pares filtro)  e instituicoes e piinstuicoes: algumas decorem diretamente do trablho executado eanteriormente sobe adjuncoes entre pi-institucos (teoria da provavilidade abstrada) einstituicoes (teorai dos modelos avstrata), outras sao genuinamsente noass. Contar aqui uma ligcao, descricao mais detallhada entre as subsecoes\\

  % Subsection 4.3 The notion of (finitary) filter pair was introduced in  \cite{AMP1}, can be seem as  a categorical presentation of a propositional logic
   
  % Since filter pairs and abstract semantic matrices constitute presentations of propositional logics,  it will be interesting to study possible connections between the  institutions associated to both concepts. 

\subsection{A $\pi$-institution for the abstract propositional logics}
\label{pi-inst-examples} 

Here we describe the $\pi$-institutions associated to categories of abstract propositional logics and some forms of translation morphisms,  as developed in \cite{MaPi1}. 

In \cite{AFLM}, \cite{FC} and \cite{MaMe} are considered some categories of propositional logics, namely ${\cal L}_{s}$ and ${\cal L}_{f}$, where:
\begin{itemize}
\item the objects are of the form $l = (\Sigma, \vdash)$,  where $\Sigma = (\Sigma_n)_{n \in \N}$  is  finitary signature, $Form(\Sigma) = Fm_\Sigma(X)$ is the absolutely free $\Sigma$-algebra of formulas on a fixed enumerable set of variables $X$  and $\vdash \subseteq P(Form(\Sigma))\times Form(\Sigma)$ is a tarskian consequence operator;
\item the morphisms $f : (\Sigma, \vdash) \rightarrow (\Sigma', \vdash')$ are of the form $f : \Sigma \rightarrow \Sigma'$ with the former category  having ``strict" ($n$-ary symbol to $n$-ary symbol) morphisms and the latter ``flexible" ($n$-ary symbol to $n$-ary term) morphisms.
\end{itemize}
To the category ${\cal L}_f$ is associated an $\pi$-institution  $J_f$ in the following way:
\begin{itemize}
    \item $\sig_f \coloneqq  {\cal L}_f$;
    \item $Sen_f : \sig_f \to \set$ is given by $(g: (\Sigma, \vdash) \to (\Sigma', \vdash)) \ \mapsto\ (\hat{g} : Form(\Sigma) \to Form(\Sigma'))$, where $\hat{g}$ is the usual expansion to formulas;
    \item  For each $l = (\Sigma, \vdash) \in |\sig_f|$ and $\Gamma \sub  Form(\Sigma)$, we define $C_{l}(\Gamma)\coloneqq  \{\phi \in  Form(\Sigma) :\Gamma\vdash_l \phi\}$.\\
    
    An analogous process is used to form $J_s$ from ${\cal L}_s$.
\end{itemize}

In \cite{MaMe}, the ``inclusion" functor $(+)_L : {\cal L}_s \to {\cal L}_f$ induces a comorphism (and also a morphism) on the associated $\pi$-institutions $(+) := ((+)_L, \alpha^+) : J_s \to J_f$, where, for each $l = (\Sigma, \vdash) \in \sig_s = {\cal L}_s$, $\alpha^+(l) = Id_{Form(\Sigma)} : Form(\Sigma) \to Form(\Sigma)$. The paper also presents a right adjoint $(-)_L : {\cal L}_f \to {\cal L}_s$ to the ``inclusion" functor. Essentially this fuctor sends a signature $\Sigma$ to its derived one $(-)_{L}\Sigma:=(Form(\Sigma)[n])_{n\in\cal{N}}$. We have also a comorphism of $\pi$-institutions  associated to this functor. Notice that given some logic $l=(\Sigma,\vdash)$, we have $Sen_{s}(-)_{L}(l)=Form((-)_{L}\Sigma)=Form(\Sigma)$. So the fuctor $(-)_{L}$ induces a comorphism $((-)_{L},\alpha^-)$ where $\alpha^-$ is the identity between formulas. It will be interesting  understand the role of these adjoint pair of functors  between the logical categories (${\cal L}_f, {\cal L}_s$) at the $\pi$-institutional level ($J_f, J_s$).

%{\color{red}{Como $Sig_s \leftrightarrows Sig_f$ \'e adjuncao, em particular tem uma bijecao entre morfismso e comorfismos. Verificar se temos comorfimso de $\pi$-instituicoes na direcao do ajunto a esquerda $Sig_s \to Sig_f$ e morfimso de instituicao correspondente pela bijecao associado ao adjunto a diretira $Sig_f \to Sig_f$}}

\vtres

%On the other hand, it cna be defined an {\em institution} where $Sig = {\cal L}_f$

\subsection{An institution for the abstract propositional logics}\label{instprop}

We now present an alternative institutionalization of propositional logic. This assignment is used in \cite{MaPi3} to establish an abstract Glivenko's theorem for algebraizable logics.

Let $l=(\Sigma,\vdash)$ be a logic and $M\in \Sigma-Str$. A subset $F$ of $M$ is a $l$-filter is for every $\Gamma\cup\{\varphi\}\subseteq Form(\Sigma)$ such that $\Gamma\vdash \varphi$ and every valuation $v:Form(\Sigma)\to A$, if $v[\Gamma]\subseteq F$ then $v(\varphi)\in F$. The pair $\langle M,F\rangle$ is then said to be a matrix model of $l$. The class of all matrix model of $l$ is denoted by $Matr_{l}$.This class is the class of objects of a category, also denoted by $Matr_l$: a morphism $h : \langle M, F \rangle \to \langle M', F' \rangle$ is a $\Sigma$-homomorphism $h : M \to M'$ such that $h^{-1}[F'] = F$; composition and identities are inherited from $\Sigma-Str$.

%Given a logic $l=(\Sigma,\vdash)$ consider the following:
From  to the category of logics ${\cal L}_f$ (also to ${\cal L}_{s}$), we define:
\begin{itemize}
    \item $\sig_{} \coloneqq  {\cal L}_f$, the category of propositional logics $l=(\Sigma,\vdash)$ and flexible morphisms.
    \item $Sen_{}:\sig_{}\to\set$ where $Sen_{}(l)=\mathcal{P}(Form(\Sigma))\times Form(\Sigma)$ and given $f\in Mor_{\sig}(l_{1},l_{2})$ then $Sen_{}(f):Sen_{}(l_{1})\to Sen_{}(l_{2})$ is such that $Sen_{}(f)(\langle\Gamma,\varphi\rangle)=\langle f[\Gamma],f(\varphi)\rangle$. It is easy to see that $Sen_{}$ is a functor.
    \item $Mod_{}:\sig_{}\to\cat^{op}$ where $Mod_{}(l)=Matr_{l}$ and given $f\in Mor_{\sig}(l_{1},l_{2})$, $Mod_{}(f):Matr_{l_{2}}\to Matr_{l_{1}}$ such that $Mod_{}(f)(\langle M',F'\rangle)=\langle f^{\star}(M'),F'\rangle$. Here $f^{\star}:\Sigma'\!-\!str\to \Sigma\!-\!str$ is a functor that ``commutes over $Set$'' induced by the morphism $f$ where the interpretation of connectives are: $c_{n}^{f^{\star}M'}\coloneqq f(c_{n})^{M'}$ for all $c_{n}\in\Sigma$ (more details in \cite{MaPi3}). 
    \item Given $l = (\Sigma, \vdash)\in|\sig_{}|$, $\langle M,F\rangle\in |Mod_{}(l)|$ and $\langle\Gamma,\varphi\rangle\in Sen_{}(l)$ define the relation $\models_l\subseteq|Mod_{}(l)|\times Sen(l)$ as:
    
    \[\langle M,F\rangle\models_{l}\langle\Gamma,\varphi\rangle\ if\!f\ for\ all\ \, v\!:\!Form(\Sigma)\to M,\ if\ v[\Gamma]\subseteq F,\ then\ v(\varphi)\in F.\]
\end{itemize}

In \cite{MaPi3}, section 3.1, it is proven that this construction defines indeed an institution.

It should be noted that this institution and the $\pi$-institution described in the previous subsection, shares the same $\sig_{}$ ($ = {\cal L}_f$), but {\em are not} connected by  the canonical relation (adjunction) between institutions and $\pi$-institutions.

\subsection{Filter pairs as institutions}\label{FilterPairInst}

The notion of (finitary) filter pair, introduced in  \cite{AMP1}, can be seem as  a categorical presentation of a propositional logic. Here we recall the precise definition of this notion   and associate an institution to the category of all filter  pairs.

\begin{Df}\label{DefinitionFilterPairGen}
Let $\Sigma$ be a signature. A {\bf Filter Pair} over $\Sigma$ is a pair $(F,i)$, consisting of a contravariant functor $F:\Sigma\!-\!\text{str}\,^{op}\to \mathbf{CLat}$, from $\Sigma$-structures to complete lattices, and a collection of maps $i=(i_{M}:F(M)\to (\mathcal{P}(M),\subseteq))_{M\in \Sigma\!-\!\text{str}}$ such that is a natural transformation.

\[
\xymatrix{
M\ar[d]_{f}&F(M)\ar[r]^{i^{F}_{M}}&(\mathcal{P}(M);\subseteq)\\
N&F(N)\ar[u]^{F(f)}\ar[r]_{i^{F}_{N}}&(\mathcal{P}(N);\subseteq)\ar[u]_{f^{-1}}
}
\]
\end{Df}

\begin{Obs}
Let $(F,i)$ be a filter pair and $X$ be a set. The relation $\vdash\subseteq \mathcal{P}(Fm_{\Sigma}(X))\times Fm_{\Sigma}(X)$ such that for any $\Gamma\cup\{\varphi\}\subseteq Fm_{\Sigma}(X)$, $\Gamma\vdash \varphi$ iff for any $a\in F(Fm_{\Sigma}(X))$ if $\Gamma\subseteq i_{Fm_{\Sigma}(X)}(a)$ then $\varphi\in i_{Fm_{\Sigma}(X)}(a)$ is a tarskian consequence relation. Then we have a propositional logic associated with the filter pair $(F,i)$ such that the set of variables is $X$.

Below is the definition of a finitary filter pair so that its associated propositional logic is finitary.
\end{Obs}

\begin{Df}\label{DefinitionFilterPair}
Let $\Sigma$ be a signature. A {\bf finitary filter pair} over $\Sigma$ is a filter pair $(F,i)$ which $F$ is a functor from $\Sigma$-structures to algebraic lattices such that for any $M\in\Sigma\!-\!\text{str}$, $i_{M}$ preserves arbitrary infima (in particular $i_{M}(\top)=M$) and directed suprema.

\end{Df}

%\begin{Df}\label{DefinitionFilterPair}
%Let $\Sigma$ be a signature. A {\bf finitary filter pair} over $\Sigma$ is a pair $(F,i)$, consisting of a contravariant functor $F:\Sigma\!-\!\text{str}\,^{op}\to \mathbf{AlgLat}$, from $\Sigma$-structures to algebraic lattices, and a collection of maps $i=(i_{M})_{M\in \Sigma\!-\!\text{str}}$ such that, for any $M\in\Sigma\!-\!\text{str}$, the function $i_{M}:F(M)\to (\mathcal{P}(M),\subseteq)$ satisfies  the following properties:

%{\bf 1.} For any $M\in\Sigma\!-\!\text{str}$, $i_{M}$ preserves arbitrary infima (in particular $i_{M}(\top)=M$) and directed suprema.

%{\bf 2.} Given a homomorphism $f:M\to N$ of $\Sigma$-structures the following diagram commutes:

%\[
%\xymatrix{
%M\ar[d]_{f}&F(M)\ar[r]^{i^{F}_{M}}&(\mathcal{P}(M);\subseteq)\\
%N&F(N)\ar[u]^{F(f)}\ar[r]_{i^{F}_{N}}&(\mathcal{P}(N);\subseteq)\ar[u]_{f^{-1}}
%}
%\]
%\end{Df}

%\hugo{talvez dar a definicao em 2 etapas. Par filtro ( a valores em ret completos e satisazendo (2). Depois pares filtro finitarios a valores em rt algebricos e satisfazendo 1 e 2

\begin{Df}[The category of filter pairs] \label{Ficat-df} %With these results established from \cite{MaPi2}, we are able to define 
Consider the category $\mathcal{F}i$ defined in the following manner:

\begin{itemize}
    \item \textbf{Objects:} Filters pairs $(F,i^{F})$.
    \item \textbf{Morphisms:} Let $(F, i^F)$ be a filter pair over a signature $\Sigma$ and $(F', i^{F'})$ be a filter pair over a signature $\Sigma'$. A morphism $(F,i^{F}) \to (F', i^{F'})$ is a pair $(H,j)$ such that $H:\Sigma'\!-\!str\to\Sigma\!-\!str$ is a {\em signature functor}
%, i.e., it commutes over $Set$ and the natural transformation $\eta^{H}:F\Rightarrow H\circ F'$ preserves variables, where $F,F'$ are free functors from the category $Set$ to structures 
and $j : F'\Rightarrow F\circ H$ is a natural transformation such that given $M'\in Obj(\Sigma'\!-\!str)$, 
$$i^{F}_{H(M')}\circ j_{M'}=i^{F'}_{M'}.$$
\[
\xymatrix{
\Sigma'\!-\!str\ar[rr]^{H}\ar@/_2pc/[ddr]_{\mathcal{P}}\ar@/^1pc/[ddr]^{F'}&&\Sigma\!-\!str\ar@/^2pc/[ddl]^{\mathcal{P}}\ar@/_1pc/[ddl]_{F}\\
\\
&\mathbf{CLat}&
}
\]
    \item \textbf{Identities:} For each signature $\Sigma$ and each filter pair $(F, i^F)$ over $\Sigma$, $Id_{(F,i^F)} \coloneqq  (Id_{\Sigma\!-\!str}, Id_{F})$.
    \item \textbf{Composition:} Given morphisms $(H,j),(H',j')\, $ in $\, \mathcal{F}i$. $$(H',j')\bullet(H,j)=(H\circ H',j\bullet j')$$
          Where $(j\bullet j')_{M''}\coloneqq j_{H'(M'')}\circ j'_{M''}$.\\
          
          Observe that \[ i^{F}_{H\circ H'(M'')}\circ((j\bullet j')_{M''})=i^{F''}_{M''}\]
          
          Indeed:
          \begin{align*}
              i^{F}_{H\circ H'(M'')}\circ((j\bullet j')_{M''})&=i^{F}_{H\circ H'(M'')}\circ(j_{H'(M'')}\circ j'_{M''})\\
              &=(i^{F}_{H\circ H'(M'')}\circ j_{H'(M'')})\circ j'_{M''}\\
              &=i^{F'}_{H'(M'')}\circ j'_{M''}\\
              &=i^{F''}_{M''}
          \end{align*}

It is straightforward to check that the composition is associative and that identity laws hold.
\end{itemize}
\end{Df}

In \cite{AMP1}, a category of \emph{finitary} filter pairs was defined and regarded as another form of functorially encoding all finitary propositional logics: in fact, the category of propositional logics and flexible morphisms can be identified with a coreflective full subcategory of the category of filter pairs.

\begin{fact} \label{adjunct logic-filter}
\quad
\begin{itemize}

\item For any signature functor $H:\Sigma'-Str\to \Sigma-Str$, there is a signature morphism $m_{H}:\Sigma \to \Sigma'$, such that $m_{H}(c_{n})=\eta_{H}(X)(c_{n}(x_{0},...,x_{n-1}))$, {where $\eta_H(X) : Form_\Sigma(X) \to H(Form_{\Sigma`}(X))$} (see Lemma 3.17 of \cite{AMP1}). We consider the functor

\[\begin{array}{rccl}
\mathbb{L}:&\mathcal{F}i&\to&\Lf\\
&(G,i^{G})&&l_{G}\\
& \downarrow (H,j) &\mapsto&\downarrow m_{H}\\
&(G',i^{G'})&&l_{G'}
\end{array}
\]

\item The functor $\mathbb{F} : \cL \to \mathcal{F}i$

\[\begin{array}{rrcl}
\mathbb{F}:&\Lf&\to&\mathcal{F}i\\
&l&&(Fi_{l},\iota)\\
&h\downarrow&\mapsto&\mathbb{F}(h)\downarrow \\
&l'&&(Fi_{l'},\iota')
\end{array}
\]

where $\mathbb{F}(h)=(h^{\star},j^{\star})$  and the natural transformation 
 $j^{\star} : Fi_{l'}\Rightarrow Fi_{l}\circ h^{\star}$ is given by a family of inclusions, i.e., let $M'\in\Sigma'-str$ and $F'\in Fi_{l'}(M')$, then $j^{\star}_{M'}(F') := F'$.

\item The functor $\mathbb{F} : \cL \to \mathcal{F}i$ is full, faithful, injective on the objects and is left adjoint to the functor $\mathbb{L}$. By a well known result of category theory, the unity of this adjunction is an isomorphism. Moreover it is easy to see that the components of the natural transformation that is the unity of this adjunction is given, for each logic $l \in Obj(\Lf)$, by the identity $id_l : l \to \mathbb{L} \circ \mathbb{F}(l) = l$.

The components of the counit of this adjunction is given by, 
for each signature $\Sigma$ and each filter pair $(G,i^{G})$ over $\Sigma$:
$$(Id_{\Sigma-Str},j^{G}) :  (Fi_{l_G}, \iota) \to (G,i^{G})$$ 
where $j^G_M : G(M) \to \mathcal{F}i_{l_G}(M)$ is the unique factorization of $i^G_M : G(M) \to \wp(M)$ through $\iota_M : \mathcal{F}i_{l_G}(M) \hookrightarrow P(M)$. Thus for each logic $l'$,  $j^G$ induces by composition a (natural) bijection:

$$\mathcal{F}i(\mathbb{F}(l'), (G, i^G)) \ \cong \ 
\Lf(l', \mathbb{L}(G, i^G)).$$ 

\item The same constructions of the above functors provide a more general adjunction relating the category of filter pairs and propositional logics which are non-finitary.

\end{itemize}
\end{fact}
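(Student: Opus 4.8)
The plan is to verify in turn that $\mathbb{L}$ and $\mathbb{F}$ are well-defined functors, that $\mathbb{F}$ is a fully faithful embedding which is injective on objects, and finally to exhibit the adjunction $\mathbb{F} \dashv \mathbb{L}$ by constructing the counit $\epsilon$ and checking its universal property. The claim that the unit is the identity then follows both by a direct computation and, independently, by the general principle that a fully faithful left adjoint has invertible unit.

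First I would settle well-definedness and the embedding properties. That $(Fi_l,\iota)$ is a genuine filter pair and that $\mathbb{F}(h)=(h^\star,j^\star)$ is a filter-pair morphism is essentially recorded in \cite{AMP1}: the $l$-filters of a $\Sigma$-structure form a complete (algebraic, in the finitary case) lattice, $\iota$ is natural because preimages of $l$-filters under $\Sigma$-homomorphisms are again $l$-filters, and the compatibility equation $\iota_{h^\star(M')}\circ j^\star_{M'}=\iota'_{M'}$ is immediate since $j^\star$ is given by inclusions. For $\mathbb{L}$, the assignment $(H,j)\mapsto m_H$ rests on Lemma 3.17 of \cite{AMP1}, which attaches to each signature functor $H$ a flexible signature morphism $m_H$; the one nonformal point is to check that $m_H:l_G\to l_{G'}$ is a genuine translation, which is driven by the naturality of $j$ together with the defining compatibility $i^{F}_{H(M')}\circ j_{M'}=i^{F'}_{M'}$. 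Faithfulness and injectivity on objects are then clear, since $(Fi_l,\iota)$ recovers $\Sigma$ and $\vdash$ and a morphism $\mathbb{F}(h)$ is determined by its signature-functor part $h^\star$. Fullness is the subtler point: given $(H,j):\mathbb{F}(l)\to\mathbb{F}(l')$, one has $H=h^\star$ for $h:=m_H$ by Lemma 3.17, while $j$ is \emph{forced} to equal $j^\star$ because $\iota'$ is a pointwise monomorphism and the compatibility equation pins $j$ down as the factorization of $\iota'$ through $\iota\circ H$ (here I use that a signature functor commutes over $\mathbf{Set}$, so $|H(M)|=|M|$ and the two power sets occurring in the compatibility equation coincide).

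The heart of the argument is the counit. For a filter pair $(G,i^G)$ over $\Sigma$ I would set $\epsilon_{(G,i^G)}:=(Id_{\Sigma\!-\!str},j^G)$, where $j^G_M:G(M)\to Fi_{l_G}(M)$ is the factorization of $i^G_M$ through $\iota_M$. The crucial lemma to establish is that this factorization exists, i.e. that $i^G_M(a)$ is an $l_G$-filter of $M$ for every $a\in G(M)$. This is exactly where the naturality of $i^G$ along valuations $v:Fm_\Sigma(X)\to M$ enters: the naturality square gives $v^{-1}(i^G_M(a))=i^G_{Fm_\Sigma(X)}(G(v)(a))$, so if $\Gamma\vdash_{l_G}\varphi$ and $v[\Gamma]\subseteq i^G_M(a)$, then $\Gamma\subseteq i^G_{Fm_\Sigma(X)}(G(v)(a))$, whence $\varphi$ lies there too by the very definition of $\vdash_{l_G}$, giving $v(\varphi)\in i^G_M(a)$. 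Granting this, $\epsilon$ is a well-defined filter-pair morphism, and its naturality in $(G,i^G)$ is a routine diagram check.

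With $\epsilon$ in hand I would verify the hom-set bijection $\mathcal{F}i(\mathbb{F}(l'),(G,i^G))\cong\Lf(l',\mathbb{L}(G,i^G))$ by sending $f$ to $\epsilon_{(G,i^G)}\bullet\mathbb{F}(f)$, with inverse $(H,j)\mapsto m_H$. Bijectivity reduces to two facts already secured: the correspondence $H\leftrightarrow m_H$ of Lemma 3.17, and the observation that the lattice-component of any morphism into $(G,i^G)$ is uniquely determined by its signature-functor part, again because $\iota$ is pointwise monic. Finally $\mathbb{L}\mathbb{F}(l)=l_{Fi_l}=l$, since the $l$-filters on the free algebra $Fm_\Sigma(X)$ reconstruct $\vdash$ exactly, so the unit is literally $id_l$ and $\Lf$ is coreflective in $\mathcal{F}i$; the non-finitary case requires no new idea, as replacing algebraic by complete lattices and dropping the finitarity clauses leaves every step intact. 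I expect the main obstacle to be precisely the factorization lemma for $\epsilon$ — verifying that the images of $i^G$ are genuine $l_G$-filters — since that is the only place where the interplay between the globally defined consequence relation $\vdash_{l_G}$ and the structure-wise filters must be controlled.
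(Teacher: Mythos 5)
Your proposal is correct and takes essentially the same route that the paper (deferring details to \cite{AMP1}) lays out in the statement itself: unit equal to the identity via $l_{Fi_l}=l$, counit given by the factorization $(Id_{\Sigma-Str},j^{G})$ of $i^G$ through $\iota$, fullness/faithfulness from the correspondence $h \leftrightarrow h^\star$ of Lemma 3.17 together with pointwise injectivity of $\iota$, and the hom-bijection induced by composition. In particular, you correctly isolate and prove the one genuinely nontrivial point that the paper leaves implicit in the phrase ``unique factorization'' --- namely that each $i^G_M(a)$ is an $l_G$-filter, established by naturality of $i^G$ along valuations $v:Fm_\Sigma(X)\to M$ --- which is exactly what makes the counit well defined.
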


\begin{Prop}\label{fuctorfilterpairobject} Every filter pair $(F, i)$ over a signature $\Sigma$ determines an institution $I_{(F,i)}$ where:
\begin{itemize}
    \item $Sig_I = \Sigma\!-\!str$;
    \item $(Sig_I \xrightarrow{Sen_I} \set)$ = $(\Sigma\!-\!str \xrightarrow{forgetful} \set)$;
    \item $(Sig_I^{op} \xrightarrow{Mod_I} \CAT) = (\Sigma\!-\!str^{op} \xrightarrow{F} \mathbf{CLat} \rightarrowtail \CAT)$;
    \item for each $M \in Ob(Sig_I) = Ob(\Sigma\!-\!str)$, define $\models_M \subseteq Ob(Mod_I(M)) \times Sen_I(M) = F(M) \times |M|$ as:
    
    \[t \models_M m \quad if\!f \quad m \in i_M(t)\]
\end{itemize}
Moreover, when $i_M$ preserves arbitrary infima, the $\pi$-institution $P_{(F,i)}$ cannonically associated to $I_{(F,i)}$ is such that
for each $M \in Ob(Sig_I) = Ob(\Sigma\!-\!str)$, ${\cal C}_M : P(Sen_I) \to P(Sen_I)$ is given by 
$$(X \subseteq |M|) \ \mapsto \ i_M(t_X), $$ where $t_X := \bigwedge\{ t \in F(M): X \subseteq i_M(t)\}$ 

\end{Prop}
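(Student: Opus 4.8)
The plan is to treat the two assertions separately. For the claim that $I_{(F,i)}$ is an institution, I would first note that $Sen_I$ (a forgetful functor) and $Mod_I$ (the contravariant functor $F$, with each complete lattice $F(M)$ regarded as a thin category through $\mathbf{CLat}\hookrightarrow\CAT$) are functors by construction; in particular the objects of $Mod_I(M)$ are exactly the elements of $F(M)$, so that the relation $\models_M\ \subseteq F(M)\times|M|$ makes sense. The only genuine content is therefore the satisfaction condition. I would take a morphism $h:M\to N$ in $Sig_I=\Sigma\!-\!str$, a model $t'\in F(N)$ and a sentence $m\in|M|$, and unwind the two sides of the required equivalence $t'\models_N Sen_I(h)(m)\iff Mod_I(h)(t')\models_M m$ into, respectively, $h(m)\in i_N(t')$ and $m\in i_M(F(h)(t'))$.

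The key observation is that this equivalence is precisely the naturality of $i:F\Rightarrow\mathcal{P}$. Indeed, the defining square of a filter pair reads $i_M\circ F(h)=h^{-1}\circ i_N$, where $h^{-1}:\mathcal{P}(N)\to\mathcal{P}(M)$ is the preimage map; hence $m\in i_M(F(h)(t'))$ iff $m\in h^{-1}(i_N(t'))$ iff $h(m)\in i_N(t')$. Thus the satisfaction condition is a literal restatement of the naturality axiom, and $I_{(F,i)}$ is an institution.

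For the second assertion I would recall from subsections 1.3 and 2.1 that the canonically associated $\pi$-institution has closure operator $\mathcal{C}_M(X)=X^{\star\star}$, where $\star$ is the Galois connection induced by $\models_M$. Computing the two polarities explicitly gives $X^\star=\{t\in F(M):X\subseteq i_M(t)\}$ for $X\subseteq|M|$, and $T^\star=\bigcap_{t\in T}i_M(t)$ for $T\subseteq F(M)$; composing, $\mathcal{C}_M(X)=\bigcap\{i_M(t):X\subseteq i_M(t)\}$. Since infima in $(\mathcal{P}(M),\subseteq)$ are intersections, the infima-preservation hypothesis yields $i_M(t_X)=i_M\bigl(\bigwedge X^\star\bigr)=\bigcap_{t\in X^\star}i_M(t)=\mathcal{C}_M(X)$, with $t_X=\bigwedge\{t\in F(M):X\subseteq i_M(t)\}$, as claimed.

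I do not expect a serious obstacle here; the only points deserving care are the typing remark (that $Mod_I$ must land in thin categories, so that models are lattice elements) and the well-definedness of $t_X$. For the latter I would note that $\top\in X^\star$ for every $X$, since $X\subseteq M=i_M(\top)$, so $t_X$ is an infimum over a nonempty family and the empty-infimum edge case is in any event absorbed by infima-preservation (which forces $i_M(\top)=M$). The conceptual heart of the proof is simply the recognition that the institution's satisfaction condition and the filter pair's naturality axiom coincide.
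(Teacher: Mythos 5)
Your proof is correct and follows essentially the same route as the paper's: the satisfaction condition is verified as a literal restatement of the naturality square $i_M \circ F(h) = h^{-1}\circ i_N$ of the filter pair, and the closure operator of the canonically associated $\pi$-institution is computed through the Galois connection, with infima-preservation giving $i_M(t_X)=\bigcap\{i_M(t): X\subseteq i_M(t)\}$. Your write-up is in fact slightly more explicit than the paper's (which leaves the Galois-connection unwinding and the well-definedness of $t_X$ implicit), but the argument is the same.
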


\Dem $Sig_{I},\ Sen_{I}$ and $Mod_{I}$ associated with a filter pair $(F,i)$ are well defined. It remains to prove the compatibility condition. Let $h:M\to M'$ be a morphism in $Sig_{I}=\Sigma\!-\!str$ and $a\in F(M')$ such that $a\models_{M'} h(m)$. So $h(m)\in i_{M'}(a)$ and since $i$ is a natural transformation we have $m\in h^{-1}\circ i_{M'}(a)=i_{M}\circ F(h)(a)$. Then $F(h)(a)\models_{M}m$. 

The associated $\pi$-instituion takes $X \sub P(U(M))$ into $i_M(T_X) = i_M (\bigwedge\{T \in F(M) : X \sub i_M(T)\} = \bigcap\{ i_M(T): X \sub i_M(T)\}$\\
\qed

\begin{Prop}{\bf (Every morphism of filter pair induces a institution morphism.)}\label{functorfilterpairmorphism} Given  morphism $(F,i) \xrightarrow{(H,j)} (F', i')$ then $I_{(F,i)}\xleftarrow{(H,Id,j)}I_{(F',i')}$ is a institution morphism.

\end{Prop}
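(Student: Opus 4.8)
The plan is to unwind the definition of institution morphism and to check that each component of the datum $(H,Id,j)$ satisfies the required conditions, with the compatibility condition turning out to be a verbatim restatement of the defining equation of a filter pair morphism. I would fix the source institution to be $I_{(F',i')}$ and the target to be $I_{(F,i)}$, so that I must exhibit a functor $\Phi:\sig_{I_{(F',i')}} \to \sig_{I_{(F,i)}}$, a natural transformation $\alpha:Sen_{I_{(F,i)}}\circ\Phi \Rightarrow Sen_{I_{(F',i')}}$, and a natural transformation $\beta:Mod_{I_{(F',i')}}\Rightarrow Mod_{I_{(F,i)}}\circ\Phi^{op}$ satisfying the compatibility condition.

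First, I would take $\Phi := H$, which is a functor $\Sigma'\!-\!str \to \Sigma\!-\!str$ by hypothesis. Second, since $H$ is a signature functor (it commutes over $\set$), we have $|H(M')|=|M'|$ for every $M'\in\Sigma'\!-\!str$ and $|H(g')|=|g'|$ for every morphism $g'$; hence $Sen_{I_{(F,i)}}\circ H$ and $Sen_{I_{(F',i')}}$ agree on objects and on morphisms, so $\alpha := Id$ is a well-defined natural transformation, its naturality squares being trivial identities. Third, I would set $\beta := j$; since $Mod_{I_{(F',i')}}=F'$ and $Mod_{I_{(F,i)}}\circ H^{op}=F\circ H$ on the relevant categories, the natural transformation $j:F'\Rightarrow F\circ H$ of the filter pair morphism, transported along the inclusion $\mathbf{CLat}\hookrightarrow\CAT$, is precisely such a $\beta$, and its naturality is exactly the naturality already assumed for $j$ (with due attention to the contravariance of $F$ and $F'$).

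The only substantial step is the compatibility condition. Given $M'\in\Sigma'\!-\!str$, a model $t'\in F'(M')=Ob(Mod_{I_{(F',i')}}(M'))$, and a sentence $n\in Sen_{I_{(F,i)}}(H(M'))=|H(M')|$, I would compute both sides of
\[ t' \models^{I_{(F',i')}}_{M'} \alpha_{M'}(n) \iff \beta_{M'}(t') \models^{I_{(F,i)}}_{H(M')} n. \]
Since $\alpha_{M'}=id$ and $\beta_{M'}=j_{M'}$, the left-hand side unwinds (by the definition of $\models^{I_{(F',i')}}$) to $n\in i'_{M'}(t')$, and the right-hand side unwinds (by the definition of $\models^{I_{(F,i)}}$) to $n\in i_{H(M')}(j_{M'}(t'))$. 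These two conditions coincide precisely because the filter pair morphism $(H,j)$ satisfies $i_{H(M')}\circ j_{M'}=i'_{M'}$, which yields the equality of subsets $i'_{M'}(t')=i_{H(M')}(j_{M'}(t'))$ of $|M'|=|H(M')|$.

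I do not expect any genuine obstacle: the content is the observation that the institution-morphism compatibility condition is a restatement of the filter pair morphism condition, once the identification $|H(M')|=|M'|$ from ``commutes over $\set$'' is invoked. The only points demanding care are keeping track of the variances (the contravariance of $F$ and $F'$, and the reversal of direction from $(F,i)\to(F',i')$ at the filter-pair level to $I_{(F',i')}\to I_{(F,i)}$ at the institution level) and confirming that $\alpha=Id$ is legitimate, which again rests on the signature-functor hypothesis.
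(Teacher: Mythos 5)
Your proposal is correct and follows essentially the same route as the paper: take $\Phi=H$, $\alpha=Id$ (legitimate because the signature functor $H$ commutes over $\set$), $\beta=j$, and reduce the compatibility condition to the filter-pair identity $i_{H(M')}\circ j_{M'}=i'_{M'}$, which is exactly the chain of equivalences $\varphi\in i'_{M'}(m')\iff\varphi\in i_{H(M')}(j_{M'}(m'))$ in the paper's proof. The only difference is one of thoroughness: the paper declares the well-definedness of the components obvious and checks only compatibility, while you also spell out why $\alpha=Id$ and $\beta=j$ make sense, including the variance bookkeeping.
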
 
%{\gabriel{Aqui n\~{a}o deveria ser contravariante? Digo porque dado $(F,i)_{\Sigma}\xrightarrow{(H,j)} (F', i')_{\Sigma'}$, assinaturas subscritas, o $H$ vai no sentido $\Sigma'str\to\Sigma str$}}\vskip 2pt
\Dem
We just need to prove that $(H,Id,j)$ satisifies the compatibility condition. Let $M'\in \Sigma'\!-\!str$, $m'\in F'(M')$ and $\varphi\in H(M')$.

\begin{align*}
    m'\models_{M'}Id_{M'}\varphi &\iff \varphi\in i'_{M'}(m')\\
    &\iff \varphi\in i_{H(M')}\circ j_{M'}(m')\\
    &\iff  j_{M'}(m')\models_{H(M')}\varphi\\
\end{align*}
The result follows\\
\qed

Using propositions \ref{fuctorfilterpairobject} and \ref{functorfilterpairmorphism} we can now define the (contravariant) functor:
\[\begin{tikzcd}
\mathcal{F}i \arrow[r, "D"]                      & \inm                                \\
{(F,i)} \arrow[r, maps to, shorten <=0.5mm, shorten >= 1.5mm] \arrow[d, "{(H,j)}"'{name=A}] & {I_{(F,i)}}\\ %\arrow[d, "{(H,Id,j)}"{name=B}] \\
{(F',i')} \arrow[r, maps to, shorten <=0.5mm, shorten >= 1.5mm]                     & {I_{(F',i')}\arrow[u, "{(H,Id,j)}"{name=B}]}
\arrow[from=A,to=B,maps to, shorten <=0.5mm, shorten >= 1.5mm]
\end{tikzcd}\]
Verifying functoriality is straightforward.

%\[\xymatrix{
%(F,i)\ar[d]_{(H,j)}&I_{(F,i)}\\
%(F`,i`)&I_{(F`,i`)}\ar[u]_{(H,Id,j)}
%}
%\]

%Moreover:

%- preserves composition;

%- preserves identities

%Thus it is a contrvarinat functor
%\end{Prop}

\begin{Obs} 
    \begin{itemize}

\item From the adjunction  $\inm \rightleftarrows \pi-\inm$ described in section 2.1,  we obtain directly a functor $\mathcal{F}i \to \pi-\inm$.

\item From the adjuction $\Lf \rightleftarrows \mathcal{F}i$, recalled in Fact \ref{adjunct logic-filter}, we obtain functors $\Lf \to \inm$ and $\Lf \to \pi-\inm$.

\end{itemize}
\end{Obs}

\subsection{Generalizations}

%\hugo{ Tlavez aqui fazer apenasVarios remarks em sequencia, devidamente orgainizados}

In this final subsection we provide a kind of generalization of the previous subsections: we explore the extension of the category of propositional logics by the category of filter pairs to ``extend'' the ($\pi$-)institution of logics to a ($\pi$-)institution of filter pairs; we extend the concept of filter pairs allowing multialgebras as the domain of a filter pair and thus we extend the functor from filter pairs to the category of institutions to a funtor from the category of multifilter pairs to institutions.

\begin{Obs} 

The institution (respec. $\pi$-institution) associated to the abstract propositional logics as described in subsection 4.2 (respec. 4.1) can be ``extended'', through the adjunction $(\mathbb{F}, \mathbb{L}) : {\cal L}_f \rightleftarrows {\cal F}i$ (see Fact \ref{adjunct logic-filter}) to a institution (respec. $\pi$-institution) for the filter pairs (apart from size issues):

\begin{itemize}

\item 

* $\sig' = {\cal F}i$;

* $Sen' : \sig \to \mathbb{S}et$ is given by $((H,j) : (G,i^G) \to (G', i^{G'}))$ \ $\mapsto$ \ $(\eta_{H}(X) : Fm_\Sigma(X) \to H(Fm_{\Sigma'}(X)))$, where $G : \Sigma-Str^{op} \to$ {\bf CLat} and $G' : \Sigma'-Str^{op} \to$ {\bf CLat};

* For each $(G, i^G) \in |\sig|$ and $\Gamma \sub  Fm_\Sigma(X)$, we define $C'_{(G,i^G)}(\Gamma)\coloneqq  \{\phi \in  Fm_\Sigma(X) :\Gamma\vdash_{\mathbb{L}(G, i^G)} \phi\}$.

Denoting $(\mathbb{S}ig, Sen, (C_\bullet))$ the $\pi$-institution of propositional logics (subsection 4.1), note that:

*  $Sen'\circ \mathbb{F} = Sen$.

* For each $(\Sigma, \vdash) \in |\Lf|$,  $C'_{\mathbb{F}(\Sigma, \vdash)} = C_{(\Sigma, \vdash)} $.

Thus $(\mathbb{F}, id_{Sen})$ is, simultaneously, a morphism and a comorphism of $\pi$-institutions $(\mathbb{S}ig, Sen, (C_\bullet)) \to (\mathbb{S}ig', Sen', (C'_\bullet))$.

\item 

* $\sig' = {\cal F}i$;

* $Sen'_{}:\sig'_{}\to\set$ where $Sen'_{}(G,i^G)=\mathcal{P}(Fm_\Sigma(X))\times Fm_\Sigma(X)$ and given $(H,j)\in Mor_{\sig'}((G,i^G),(G',i^{G'}))$ then $Sen_{}(H,j):Sen_{}(G, i^G)\to Sen_{}(G',i^{G'})$ is such that $Sen_{}(H,j)(\langle\Gamma,\varphi\rangle)=\langle \eta_H(X)[\Gamma],\eta_H(X)(\varphi)\rangle$.
    
* $Mod'_{}:\sig'_{}\to\cat^{op}$ where $Mod'_{}(G,i^G) = Matr_{\mathbb{L}(G,i^G)}$ and given $(H,j)\in Mor_{\sig'}((G,i^G),(G',i^{G'}))$, $Mod'_{}(H,j) : Matr_{\mathbb{L}(G',i^{G'})}\to Matr_{\mathbb{L}(G,i^G)}$ such that $Mod'_{}(H,j)(\langle M',F'\rangle)=\langle H(M'),F'\rangle$. 

 * Given $(G,i^G))\in|\sig'_{}|$, $\langle M,F\rangle\in |Mod'_{}(G,i^G)|$ and $\langle\Gamma,\varphi\rangle\in Sen'_{}(G,i^G)$ define the relation $\models'_{(G,i^G)}\subseteq|Mod'_{}(G,i^G)|\times Sen'(G,i^G)$ as:
    
    \[\langle M,F\rangle\models'_{(G,i^G)}\langle\Gamma,\varphi\rangle\ if\!f\ for\ all\ \,\!Fm_\Sigma(X)\xrightarrow{v} M,\ v(\varphi)\in F \ for \ v[\Gamma]\subseteq F\]

Denoting $(\mathbb{S}ig, Sen, Mod, (\models_\bullet))$ the institution of propositional logics (subsection 4.2), note that:

*  $Sen'\circ \mathbb{F} = Sen$.

* $Mod'\circ \mathbb{F} = Mod$

* For each $l = (\Sigma, \vdash) \in |\Lf|$, each $ \langle \Gamma ,  \varphi\rangle \in Sen(l)$ and each $\langle M, F \rangle \in |Mod(l)|$ 
   \[\langle M,F\rangle\models'_{\mathbb{F}(l)}\langle\Gamma,\varphi\rangle\ if\!f\ \langle M,F\rangle\models_{l}\langle\Gamma,\varphi\rangle.\]

Thus $(\mathbb{F}, id_{Sen}, id_{Mod})$ is, simultaneously, a morphism and a comorphism of institutions $(\mathbb{S}ig, Sen, Mod, (\models_\bullet))$ $\to (\mathbb{S}ig', Sen', Mod', (\models'_\bullet))$.

%\item talvez dar 1oa uma ($\pi$)-instutuicao de pares filtro, generalizando a $\pi$-institucao de logias prop para multialgeras. Isrto poderia conecar co exmplo na secao 6.Seria interessante tambem dizer em remark que a (pi-)instituicao asdocida a  cada par filtro estende a associacao entre  loficas prop e (pi-)inst

%\item
%    adajuncao institicoes e pares filtrogenerlaizados (talvez tenha como pedir par filtro fnitario)
    
 %   Aqui o par filtro iria de uma categoria de categorias... via Mod
    
\end{itemize}

\end{Obs}

%\hugo{Talvez citar 1o que apres fuiltro rekatvos, usando subcategorias (nao plenas mesmo ) de $\Sigma$-str sao uteis (tarbalh co Peter). Mas que na sequencia vamos estender os pares filtro para multialgebras.  afiramr que a instituicao desta secao pode ser estendida  a este caso e que esta tambem estende a instituicao para semantica de N-matrizes ara logica proscional,  in the sequel we will extend the filter pairs to multialgebras.}

{The institution obtained above can be extended to the case of multialgebras and that this also extends the institution for N-matrix semantics to propositional logic (\cite{AZ}) allowing us to use the institution theory in order to analyze logical properties of non-algebraizable logics. Moreover, another work in progress, we are trying, using filter pairs, to establish a multialgebraic semantics for propositional logics that are not algebraizable, for example Logic of Formal Inconsistency (LFI's) (\cite{CCM}), and possibly to obtain a kind of transfer theorem between metalogical and multialgebraic properties.
}

% talvez dar 1oa uma ($\pi$)-instutuicao de pares filtro, generalizando a $\pi$-institucao de logias prop para multialgeras. Isrto poderia conecar co exmplo na secao 6.
%Seria interessante tambem dizer em remark que a (pi-)instituicao asdocida a  cada par filtro estende a associacao entre  loficas prop e (pi-)inst

%aqui tambem sria legal discutir como a instituicao associada a um par filtro se relaciona com a instituicao "das matrizes" que esta no arxiv \cite{MaPi3}. 

%Talvez citar 1o que apres fuiltro rekatvos, usando subcategorias (nao plenas mesmo ) de $\Sigma$-str sao uteis (tarbalh co Peter).

%\item Indiccar a definicao de $\Sigma$-MAlg e(lingugme multialgebrica)  dar a definicao de par filltro neste contexto $\Sigma = \Sigma_{s} \sqcup \Sigma_{m}$ (avisar que oprcaoes sao nao vazias). afiramr que a instituicao desta secao pode ser estendida  a este caso e que esta tambem estende a instituicao para semantica de N-matrizes ara logica próscional, Avron, Marcelo, LFI 

%Adjunction $\Sigma-Malg \rightleftarrows \Sigma-alg$: "inclusao via singletons" $i: \Sigma-alg \to \Sigma-Malg$ tem adjunto a esquerda $p : \Sigma-Malg \to \Sigma-alg$ que a cada multialgebra $M$ assoica a algebra estrita $P^*(M)$ das partes nao vazias. a unidade da adjuncao é $u_M: M  \to i(P^*(M))$ o  mapa de sinlgeton

%opa!!! SO TEMOS 2 FUNTORES ENTRE ALGEBRAS E MULTIALGEBRAS $(i, P^*)$, MAS NAO VALE ADJAUNCAO!

%This induces a apir of functor $\mathcal{F}i \leftrightarrows m\mathcal{F}i$: 

%RESUMIR O QUE ESTA ABAIXO
\begin{Obs}[Multialgebras]
\leavevmode

\begin{itemize}

    \item A $n$-ary multioperation on a set $A$ is a function $F : A^n \to {\cal P}^*(A)$, where ${\cal P}^*(A) = A \setminus \{ \emptyset\}$. To each ordinary $n$-ary multioperation on $A$, $f : A^n \to A$ is associated a (strict) $n$-ary operation on $A$ : $F : A^n \to {\cal P}^*(A)$ given by $F := s_A \circ f$, where $s_A : A \to {\cal P}^*(A), x \mapsto s_A(x) = \{x\}$.

    \item A multialgebraic signature is a sequence of pairwise disjoint sets 
 $\Sigma=(\Sigma_n)_{n\in \mathbb{N}},$
 where $\Sigma_n=S_n\sqcup M_n$, where $S_n$ is the set of strict multioperation symbols and $M_n$ is 
the set of multioperation symbols. In particular,  $\Sigma_0=S_0 \sqcup M_0$, $F_0$ 
is the set of symbols for constants and $M_0$ is the set of symbols for multiconstants. We also denote 
$\Sigma=((S_n)_{n\ge0},(M_n)_{n\ge0}).$

%\begin{defn} Let $A$ be any set.
% \begin{enumerate}[i -]
 
%\item A multi-operation of arity $n \in \mathbb{N}$ over a set $A$ is a function $A^n \to \mathcal P^*(A):=\mathcal 
%P(A)\setminus\{\emptyset\} $. 

%\item A multi-operation of arity $n \in \mathbb{N}$ over a set $A$, $A^n \to \mathcal P^*(A)$, is {\em strict}, 
%whenever it factors throuth the singleton function $s_A : A \rightarrowtail \mathcal P^*(A)$, $a\mapsto s_A(a) := \{ a\}$. Thus it can be naturally identified with an ordinary n-ary operation  $A^n \to A$.
 
%A $0$-ary multi-operation (respectively {\em strict} multi-operation)  on $A$ can be identified with a non-empty subset of $A$ (respectively a singleton subset of $A$).

%\begin{defn}
    \item A {multialgebra} over a signature $\Sigma=((S_n)_{n\ge0},(M_n)_{n\ge0})$, is a set $A$ endowed with a family of n-ary 
multioperations 
$$\sigma^A_n : A^n \to \mathcal P^*(A),\, \sigma_n \in S_n \sqcup M_n,\, n \in \mathbb N,$$  
such that: if $\sigma_n \in S_n$, then $\sigma^A_n : A^n \to \mathcal P^*(A)$ is a {\em strict} n-ary multioperation.
%\end{defn}

    \item If $A$ and $B$ are $\Sigma$-multialgebras, then a $\Sigma$-morphism from $A$ to $B$ is a function $h : A \to B$ such that  for each $n \in \mathbb{N}$, each $\sigma_n \in S_n \sqcup M_n$ and each $a_0, \cdots, a_{n-1} \in A$ 
$$h[\sigma^A(a_0, \cdots, a_{n-1})] \subseteq \sigma^B(h(a_0), \cdots, h(a_{n-1})).$$

\item $\Sigma$-morphisms between $\Sigma$-multialgebras can be composed in a natural way and they form a category $\Sigma$-Malg. It is clear that $\Sigma$-alg, the category of ordinary $\Sigma$-algebras is isomorphic to the a full subcategory of strict $\Sigma$-multialgebras. $s : \Sigma-Alg \hookrightarrow \Sigma-Malg$.

%\begin{rem}

\item Every algebraic signature $\Sigma = (F_n)_{n \in \mathbb N}$ is  a multialgebraic signature where $M_n = \emptyset, \forall 
n \in \mathbb N$. Each algebra 
$(A, ((A^n \overset{f^A}\to A)_{f \in F_n})_{n \in \mathbb N})$
over the algebraic signature $\Sigma$ can be naturally identified with a multialgebra 
$(A, ((A^n \overset{f^A}\to A  \overset{s_A}\rightarrowtail \mathcal P^*(A))_{f \in F_n})_{n \in \mathbb N})$ 
over the same signature.

\item Every multialgebraic signature $\Sigma = ( (S_n)_{n \in \mathbb N}, (M_n)_{n \in \mathbb N})$ induces naturally a 
first-order language 
$L(\Sigma) = ((F_n)_{n \in \mathbb N}, (R_{n+1})_{n \in \mathbb N})$
where $F_n := S_n$ is the set of n-ary operation symbols and $R_{n+1} := M_n$ is the set of (n+1)-ary relation symbols. In this 
way, 
multialgebras 
$(A, ((A^n \overset{\sigma^A}\to  \mathcal P^*(A))_{\sigma  \in S_n \sqcup M_n})_{n \in \mathbb N})$
over a multialgebraic signature $\Sigma = (S_n \sqcup M_n)_{n \in \mathbb N}$ can be naturally identified with the first-order 
structures over the language  $L(\Sigma)$ that satisfies the $L(\Sigma)$-sentences:

$\forall x_0 \cdots \forall x_{n-1} \exists x_n (\sigma_n(x_0, \cdots, x_{n-1}, x_n)), \ \text{for each}\ \sigma_n \in R_{n+1} = 
M_n, n \in \mathbb N.$ \footnote{We will address this correspondence in Example \ref{MultialgebrasDLS}.}

\item Now we focus our attention into a more syntactic aspect of this multialgebras theory. We start with a (recursive) definition of 
(multi)terms: variables $x_i, i \in \mathbb N$ are terms; 
 if $t_0,\cdots,t_{n-1}$ are terms and $\sigma \in S_n \sqcup M_n$, then $\sigma(t_0,\cdots, t_{n-1})$ is a term.
 %\item If $t(x_1,...,x_n)$ is a term and $m\in M_n$ then $m(t(x_1,...,x_n))$ is a term.

  %We will call a multi-term $t$ \textbf{\em strict}, whenever it is composed only by combination of \textbf{\em strict} 
%multi-operations and variables. The notion of \textbf{\em occurrence} of a  variable in a term is as the usual. We will denote 
%$var(t)$ as the (finite set of variables) that occurs in the term $t$.

\item To define an interpretation for terms, we need a preliminary step. Given $\sigma \in S_n \sqcup M_n$, we ``extend'' $\sigma^A 
: A^n \to \mathcal P^*(A)$ to a n-ary operation in $\mathcal P^*(A)$, $\sigma^{\mathcal P^*(A)} : \mathcal P^*(A)^n \to \mathcal 
P^*(A)$, by the rule:
$$\sigma^{\mathcal P^*(A)}(A_0, \cdots ,A_{n-1}) :=
\bigcup\limits_{a_0\in A_0} \cdots \bigcup\limits_{a_{n-1}\in A_{n-1}}\sigma^A(a_0,\cdots,a_{n-1}).$$
In this way, ${\cal P}^*(A)$ is an ordinary $\Sigma$-algebra. Moreover 
$$\sigma^{\mathcal P^*(A)}(\{a_0\}, \cdots ,\{a_{n-1}\}) = \sigma^A(a_0,\cdots,a_{n-1}).$$

\item The association above determines a functor $p : \Sigma-Malg \to \Sigma-alg$ and, the family of singleton maps $s_A : A \to (s\circ p)(A)$, $A \in |\Sigma-Malg|$, is a natural transformation.
\end{itemize}
\end{Obs}

\begin{Obs}[Multifilter pairs and institutions]
\leavevmode

\begin{itemize}

\item It is straightfoward to extend the notion of filter pair $(G, i^G)$, where the domain of the functor $G$ is the category $\Sigma-alg$ to the concept of {\em multifilter pair}, where the domain of the functor $G$ is the category $\Sigma-Malg$. With a natural   notion of morphism of mult-filter pair we obtain a category $m{\cal F}i$ of multifilter pairs.

\item The previously described functors $s : \Sigma-alg \to \Sigma-Malg$ and $p : \Sigma-Malg \to \Sigma-alg$ provide a pair of functors
$ {\cal F}i \rightleftarrows m{\cal F}i$.

\item The functor ${\cal F}i \to \inm$ can be extended to a funtor $m{\cal F}i \to \inm$.

%e estender morfimo de institutucao para a $m{\cal F} i$}

%{\color{red}{SERIA BOM FALAR DA MONADA DAS PARTES NAO VAZIAS

%unidade $\eta_X : X \to P^*(X)$, $x \mapsto \{x\}$

%multiplicacao
%$\mu_ X : P^*(P^*(X)) \to P^*(X)$, $\Gamma \mapsto \bigcup \Gamma$

%valem as leis  assocativa e idemtidae
%}}

%A PARTIR DA MONADA ACIMA, DESCREVER ADJUNCAO $\Sigma-Alg \leftrightarrows \Sigma-mAlg$ (existe mesmo?) e como dai obteria adjuncao ${\cal F}i \leftrightarrows m{\cal F}i$

\end{itemize}

\end{Obs}

 {We summarize below some of the functors previously presented.}

 %It can be described an analogous diagram for ``morphisms''  instead of ``co-morphism''.}

%MONTAR UM DIAGRAMA ENTRE INSTITUICOES, Pi-INSTITUICES, LOGICAS PROP, PARES FILTRO: neste seria bom indicar ao lada da seta do funtor a subsecao onde isto foi definido}}

\[\xymatrix{
&\bf \pi-Inst_{mor}\ar@<0ex>[d]&\\
{\Lf}\ar@<0ex>[r]\ar@<0ex>[dr]&\bf Inst_{mor}\ar@<1ex>[u]&\\
&{\mathcal{F}i}\ar@<1ex>[ul]\ar@<0ex>[u]\ar@{->}[r]<1ex>& {m\mathcal{F}i}\ar@<0ex>[l] \ar@<0ex>[ul]
}\]

\section{Skolemization, a new institutional device}\label{Skolem}

Skolemization is an important tool of classical model theory, this section seeks to develop it in the context of institutions. We also prove a borrowing theorem and apply it to obtain a form of downward L\"owenheim-Skolem for the setting of multialgebras.

Given an institution $I$, we say that $\langle I, S, (\mathcal{I}_\Sigma)_{\Sigma\in|\sig|}, (\tau_\Sigma)_{\Sigma\in|\sig|} \rangle$ is an skolemization for $I$ iff:
\begin{itemize}
    \item $S$ is a functor of the form
    \[\begin{tikzcd}
(Mod)^\sharp \arrow[r, "S"]                                                      & (Mod^{\mathbb{P}res})^\sharp                                                       \\
{\langle \Sigma, M\rangle} \arrow[d, "{\langle f,u\rangle}"'{name=A}] \arrow[r, maps to] & {\langle (\Sigma_S,S_\Sigma),M_{S\Sigma}\rangle} \arrow[d, "{\langle g,v\rangle}"{name=B}] \\
{\langle \Sigma',N\rangle} \arrow[r, maps to]                                    & {\langle (\Sigma'_S,S_{\Sigma'}),N_{S\Sigma'}\rangle}                            \arrow[from=A,to=B,maps to, shorten <= 7mm, shorten >= 14mm]
\end{tikzcd}\]
    Where $^{\sharp}$ denotes the Grothendieck construction. We refer to S as the skolem functor.
    \item For each $\Sigma\in|\sig|$, $\Sigma\xrightarrow{\tau_\Sigma}\Sigma_S$ is an arrow in $\sig$ satisfying $M_{S\Sigma}\rest_{\tau_\Sigma}=M$ for all $M\in|Mod(\Sigma)|$. Given $M\in Mod(\Sigma)$ we say that $M'\in Mod(\Sigma_S)$ is a skolemization of $M$ if $M'\rest_{\tau_\Sigma}=M$ and $M'\models_{\Sigma_S}S_{\Sigma}$
    \item For each signature $\Sigma$, $\mathcal{I}_\Sigma$ is an inclusion system in $Mod(\Sigma_S)$ such that, if the $\Sigma_S$-models $M'$ and $N'$ are skolemizations of $M$ and $N$ respectively and $M'\hookrightarrow N'$ then $M^\star=N^\star$. \footnote{Given $M\in|Mod(\Sigma)|$, define $M^\star\coloneqq\{\varphi\in Sen(\Sigma): M\models_\Sigma \varphi\}$}
\end{itemize}

\begin{Ex}{\bf{FOL$^1$}}\\
    Let {\bf{\,FOL$^1$}} stand for the institution of unsorted first order logic and consider the functor: 
    \[\begin{tikzcd}
(Mod)^\sharp \arrow[r, "Skolem"]                                                      & (Mod^{\mathbb{P}res})^\sharp                                                       \\
{\langle \Sigma, M\rangle} \arrow[d, "{\langle f,u\rangle}"'{name=A}] \arrow[r, maps to] & {\langle (\Sigma_S,S_\Sigma),M_{S\Sigma}\rangle} \arrow[d, "{\langle f',u\rangle}"{name=B}] \\
{\langle \Sigma',N\rangle} \arrow[r, maps to]                                    & {\langle (\Sigma'_S,S_{\Sigma'}),N_{S\Sigma'}\rangle}                            \arrow[from=A,to=B,maps to, shorten <= 7mm, shorten >= 14mm]
\end{tikzcd}\]
Where $\Sigma_S$ and $S_\Sigma$ are, respectively, the skolem expansion and theory of $\Sigma$ and $M_{S\Sigma}$ is any skolemization of $M$ with the same underlying set. Let $F^\Sigma_\psi$ be the skolem function of the $\Sigma$-formula $\psi$ and define $f'$ as follows: if $x\in\Sigma$ simply let $f'(x)=f(x)$, else we have $x=F^{\Sigma}_\psi$ for some $\psi$ in $Sen(\Sigma)$ and then we let $f'(x)=F^{\Sigma'}_{Sen\,f(\psi)}$.

For each first order signature $\Sigma$, let $\mathcal{I}_\Sigma$ be the usual inclusion system on $Mod^{{\bf{FOL^1}}}(\Sigma)$ and define $\tau_\Sigma:\Sigma\to\Sigma_S$ as $\tau_\Sigma(x)=x$. It is easy to see that $$\langle {\bf{FOL^1}}, Skolem, (\mathcal{I}_\Sigma)_{\Sigma\in|\sig^{{\bf{FOL^1}}}|}, (\tau_\Sigma)_{\Sigma \in|\sig^{{\bf{FOL^1}}}|}\rangle$$ is a skolemization for {\bf{FOL$^1$}}.

\end{Ex}
\begin{theorem}
    Let I institution with skolemization $\langle I, S, (\mathcal{I}_\Sigma)_{\Sigma\in|\sig^I|}, (\tau_\Sigma)_{\Sigma\in|\sig^I|} \rangle$. Given an institution $J$ and a morphism $\langle\phi,\alpha,\beta \rangle:J\to\ I$ if:
    \begin{itemize}
        \item $\phi$ is fully faithful,
        \item For each $\Sigma_i\in|\sig^I|$ there is some $\Sigma_j\in|\sig^J|$ such that $\phi(\Sigma_j)\cong(\phi\Sigma_i)_S$ in $\sig^I$. Let $i_{\Sigma_i}:(\Sigma_j)\to(\Sigma_i)_S$ denote the isomorphism arrow,
        \item Each $\beta_\Sigma$ is an isomorphism, and
        \item Each $\alpha_\Sigma$ is semantically surjective, that is, for every $\varphi\in Sen^{J}(\Sigma)$ there is some $\psi\in \alpha_{\Sigma}[Sen^I(\phi\Sigma)]$ such that $\varphi^\star=\psi^\star$.
    \end{itemize}
    Then $\langle J, S', (\mathcal{I'}_\Sigma)_{\Sigma\in|\sig^J|}, (\tau'_\Sigma)_{\Sigma\in|\sig^J|} \rangle$ has a skolemization where
    \begin{itemize}
        \item If $\mathcal{I}_{\phi\Sigma}=\langle I,E\rangle$ then $\mathcal{I'}_{\Sigma}=\langle I',E' \rangle$ where $I'$ and $E'$ are the images of $\beta^{-1}_{\widecheck{\Sigma}}Mod^Ii_{\phi\Sigma}$ restricted to $I$ and $E$ respectively,
         \item For each $\Sigma$, $\tau'_\Sigma$ is the unique arrow satisfying $\phi(\tau'_\Sigma)=i^{-1}_{\phi\Sigma}\cdot\tau_\phi\Sigma$.
    \end{itemize}
\end{theorem}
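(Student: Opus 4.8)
The plan is to transport (``borrow'') the entire skolemization datum of $I$ across the morphism $\langle\phi,\alpha,\beta\rangle$, exploiting full faithfulness of $\phi$ together with the hypothesis that the skolem expansion of each $\phi\Sigma$ lies in the image of $\phi$. Throughout, fix $\Sigma\in|\sig^J|$ and write $\widecheck\Sigma$ for a chosen $J$-signature carrying a distinguished isomorphism $i_{\phi\Sigma}:\phi\widecheck\Sigma\xrightarrow{\cong}(\phi\Sigma)_S$ in $\sig^I$ (condition 2). All constructions arise by packaging the two isomorphisms $Mod^I(i_{\phi\Sigma})$ and $\beta_{\widecheck\Sigma}^{-1}$ (the latter available by condition 3) into a single transport isomorphism $T_\Sigma:Mod^I((\phi\Sigma)_S)\xrightarrow{\cong}Mod^J(\widecheck\Sigma)$, and dually on sentences via $\alpha_{\widecheck\Sigma}$ and $Sen^I(i_{\phi\Sigma})$.

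First I would define the skolem functor $S'$ for $J$. On an object $\langle\Sigma,m\rangle$ of $(Mod^J)^\sharp$ I set its signature component to $(\widecheck\Sigma,S'_\Sigma)$, where the skolem theory $S'_\Sigma\subseteq Sen^J(\widecheck\Sigma)$ is the $\alpha_{\widecheck\Sigma}$-image of the $i_{\phi\Sigma}$-transport of $S_{\phi\Sigma}$, and its model component to $m_{S'\Sigma}:=T_\Sigma\big((\beta_\Sigma m)_{S(\phi\Sigma)}\big)$, the transport of the $I$-skolemization of $\beta_\Sigma m$. Functoriality of $S'$, and the fact that it has the correct shape into $(Mod^{\mathbb{P}res})^\sharp$, follow from functoriality of $S$, of $\beta$, and of $i_{(-)}$, using full faithfulness of $\phi$ to lift base-change arrows; this is routine bookkeeping with the relevant naturality squares. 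The arrow $\tau'_\Sigma:\Sigma\to\widecheck\Sigma$ is then forced: since $\phi$ is full and faithful, there is a unique $\tau'_\Sigma$ with $\phi(\tau'_\Sigma)=i_{\phi\Sigma}^{-1}\cdot\tau_{\phi\Sigma}$, exactly as in the statement.

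Next I would verify the two ``restriction'' requirements. Using contravariant functoriality of $Mod^I$ and the naturality square of $\beta$ at $\tau'_\Sigma$, one computes $T_\Sigma^{-1}(m')\rest_{\tau_{\phi\Sigma}}=\beta_\Sigma\big(m'\rest_{\tau'_\Sigma}\big)$ for every $m'\in Mod^J(\widecheck\Sigma)$. Since $\beta_\Sigma$ is a bijection on objects, this yields $m_{S'\Sigma}\rest_{\tau'_\Sigma}=m$ (because $(\beta_\Sigma m)_{S(\phi\Sigma)}\rest_{\tau_{\phi\Sigma}}=\beta_\Sigma m$ in $I$) and, more generally, that $m'$ is a $J$-skolemization of $m$ if and only if $M':=T_\Sigma^{-1}(m')$ is an $I$-skolemization of $\beta_\Sigma m$; the satisfaction half of this equivalence uses the morphism compatibility condition together with the satisfaction condition carried across $i_{\phi\Sigma}$ and the definition of $S'_\Sigma$. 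Finally, $\mathcal{I}'_\Sigma$ is defined as the $T_\Sigma$-image of $\mathcal{I}_{\phi\Sigma}$; as $T_\Sigma$ is an isomorphism of categories it carries an inclusion system to an inclusion system, matching the description $\langle I',E'\rangle$ of the statement.

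The main obstacle is the inclusion-system clause: given $J$-skolemizations $m',n'$ of $m,n$ with $m'\hookrightarrow n'$ in $\mathcal{I}'_\Sigma$, prove $m^\star=n^\star$. By the previous paragraph $M':=T_\Sigma^{-1}(m')$ and $N':=T_\Sigma^{-1}(n')$ are $I$-skolemizations of $\beta_\Sigma m$ and $\beta_\Sigma n$, and the inclusion $m'\hookrightarrow n'$ is by construction the $T_\Sigma$-image of an inclusion $M'\hookrightarrow N'$ in $\mathcal{I}_{\phi\Sigma}$. The skolemization hypothesis on $I$ then gives $(\beta_\Sigma m)^\star=(\beta_\Sigma n)^\star$ in $Sen^I(\phi\Sigma)$. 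It remains to descend this to $J$: by semantic surjectivity of $\alpha$ (condition 4) every $\varphi\in Sen^J(\Sigma)$ is semantically equivalent to some $\alpha_\Sigma(\psi)$ with $\psi\in Sen^I(\phi\Sigma)$, so $m\models^J_\Sigma\varphi\iff m\models^J_\Sigma\alpha_\Sigma(\psi)$, and by the morphism compatibility condition the latter is equivalent to $\beta_\Sigma m\models^I_{\phi\Sigma}\psi$; the same chain holds for $n$, whence $(\beta_\Sigma m)^\star=(\beta_\Sigma n)^\star$ forces $m^\star=n^\star$. I expect this descent step — pinning down the precise way semantic surjectivity and compatibility interact to move theories between $J$ and $I$ — to be the delicate point, with everything else reducing to diagram chasing along the fixed transport isomorphisms.
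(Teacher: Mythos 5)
Your proposal is correct and is essentially the paper's own proof: the transport isomorphism $T_\Sigma=\beta^{-1}_{\widecheck{\Sigma}}\circ Mod^I(i_{\phi\Sigma})$ is exactly the map the paper uses to define $\widecheck{M}$, $S_{\widecheck{\Sigma}}$, and $\mathcal{I}'_\Sigma$, and your $\tau'_\Sigma$ via full faithfulness, the equivalence ``$m'$ skolemizes $m$ in $J$ iff $T_\Sigma^{-1}(m')$ skolemizes $\beta_\Sigma m$ in $I$,'' and the final descent (apply $I$'s inclusion-system clause, then the compatibility condition plus semantic surjectivity of $\alpha_\Sigma$) reproduce the paper's argument step for step. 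The only difference is presentational: the paper assembles $S'$ through an intermediate functor $(Mod^J)^\sharp\to(Mod^I\phi)^\sharp$ composed with $S$ and spells out the functoriality and naturality diagram chases that you defer as routine bookkeeping.
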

\Dem
Consider the application

\[\begin{tikzcd}
m:(Mod^J)^\sharp \arrow[rr]                                                        &  & (Mod^I\phi)^\sharp                                                                                   \\
{\langle \Sigma, M \rangle} \arrow[rr, maps to,xshift=0.5ex,shorten=5mm] \arrow[d, "{\langle f,u \rangle}"{name=U}] &  & {\langle \phi(\Sigma),\beta_\Sigma(M)\rangle} \arrow[d, "{\langle \phi(f),\beta_\Sigma(u)\rangle}"{name=D}] \\
{\langle \Sigma',N\rangle} \arrow[rr, maps to,xshift=0.3ex,shorten=4.8mm]                                     &  & {\langle \phi(\Sigma'),\beta_{\Sigma'}(N)\rangle}
\arrow[from=U,to=D,shorten=11mm, xshift=-4.2ex,maps to]
\end{tikzcd}
\]

Let us prove that $m$ is a functor. Given arrows $\langle\Sigma,M\rangle\xrightarrow{\langle f,u\rangle}\langle \Sigma',N\rangle\xrightarrow{\langle g,v \rangle}\langle \Sigma'',W \rangle$ in $(Mod^J)^\sharp$ we have:

\begin{align*}
    m(\langle g,v \rangle \cdot \langle f,u\rangle) &= m(\langle gf, Mod^Jfv\cdot u\rangle)\\
                                                    &= \langle \phi(gf),\beta_\Sigma(Mod^Jfv\cdot u)\rangle\\
                                                    &= \langle \phi(g)\cdot\phi(f),(\beta_\Sigma Mod^Jf)(v)\cdot \beta_\Sigma (u) \rangle\\
                                                    &= \langle \phi(g)\cdot\phi(f),(Mod^I\phi(f)\beta_{\Sigma'})(v)\cdot \beta_\Sigma (u) \rangle\\
m(\langle g,v\rangle)\cdot m(\langle f,u \rangle)    &= \langle \phi(g),\beta_{\Sigma'}(v)\rangle\cdot \langle \phi(f),\beta_\Sigma(u)\rangle
\end{align*}
As $m$ clearly satisfies the identity laws we have that $m$ is well defined.

Consider now the functors $(Mod^I\phi)^\sharp \xhookrightarrow{\mathcal{J}} (Mod^I)^\sharp \xrightarrow{S} (Mod^{\pres^I})^\sharp$. Composing:

\[\begin{tikzcd}
(Mod^J)^\sharp \arrow[rr, "{S\mathcal{J}m}"]                                                        &  & (Mod^{\pres^I})^\sharp                                                                                   \\
{\langle \Sigma, M \rangle} \arrow[rr, maps to,xshift=0.3ex,shorten=5mm] \arrow[d, "{\langle f,u \rangle}"{name=U}] &  & {\langle ((\phi\Sigma)_S,S_{\phi\Sigma}),(\beta_\Sigma(M))_{S\phi\Sigma}\rangle} \arrow[d, "{\langle \psi,v\rangle}"{name=D}] \\
{\langle \Sigma',N\rangle} \arrow[rr, maps to,xshift=0.3ex,shorten <=4.8mm, shorten >=4mm]                                     &  & {\langle ((\phi\Sigma')_S,S_{\phi\Sigma'}),(\beta_{\Sigma'}(N))_{S_{\phi\Sigma}}\rangle}
\arrow[from=U,to=D,shorten=16mm, xshift=-7.6ex,maps to]
\end{tikzcd}
\]

We now have what we need to define a functor
$S':(Mod^J)^\sharp\to(Mod^{\pres^J})^\sharp$. Given $\langle\Sigma,M\rangle\in|(Mod^J)^\sharp|$, let $S'(\langle\Sigma,M\rangle)\coloneqq\langle (\widecheck{\Sigma},S_{\widecheck{\Sigma}}),M_{\widecheck{\Sigma}}\rangle$ where:

\begin{itemize}
    \item $\widecheck{\Sigma}$ is an object in $\sig^J$ such that there is an isomorphism $i_{\phi\Sigma}:\phi(\widecheck{\Sigma})\xrightarrow{\sim}(\phi\Sigma)_S$ in $\sig^I$
    \item $S_{\widecheck{\Sigma}}\coloneqq\alpha_{\widecheck{\Sigma}}(Sen^Ii^{-1}_{\phi\Sigma}(S_{\phi\Sigma}))$
    \item $\widecheck{M}\coloneqq\beta_{\widecheck{\Sigma}}^{-1}Mod^Ii_{\phi\Sigma}((\beta_\Sigma)_{S\phi\Sigma})$
 \end{itemize}
And, given an arrow $\langle f,u\rangle$ in $(Mod^J)^\sharp$, let $S'(\langle f,u\rangle)\coloneqq\langle\widecheck{\psi},\widecheck{v} \rangle$, where:

\begin{itemize}
    \item $\phi(\widecheck{\psi})$ is the lone arrow that makes the below square commute
\[\xymatrix{
(\phi\Sigma)_S\ar[r]^{\psi}\ar[d]_{\cong}             &(\phi\Sigma')_S\ar[d]^{\cong}\\
\phi(\widecheck{\Sigma})\ar@{-->}[r]_{\phi(\widecheck{\psi})} &\phi(\widecheck{\Sigma'})
}\]
    \item $\widecheck{v}\coloneqq\beta_{\widecheck{\Sigma}}^{-1}(Mod^Ii_{\phi\Sigma}(v))$
\end{itemize}

First, let us prove that $S'(\langle f,u\rangle)$ is a morphism in $(Mod^{\pres^J})^\sharp.$

\begin{align*}
    Sen^I\psi(S_{\phi\Sigma}) &\subseteq S_{\phi\Sigma'}\\
    \alpha_{\widecheck{\Sigma'}}(Sen^Ii^{-1}_{\phi\Sigma'}(Sen^I\psi(S_{\phi\Sigma})) &\subseteq \alpha_{\widecheck{\Sigma'}}(Sen^Ii^{-1}_{\phi\Sigma'}(S_{\phi\Sigma'}))
\end{align*}

As $\alpha_{\widecheck{\Sigma'}}\cdot Sen^Ii^{-1}_{\phi\Sigma'}\cdot Sen^I\psi=\alpha_{\widecheck{\Sigma'}}\cdot Sen^I\phi\widecheck{\psi}\cdot Sen^Ii^{-1}_{\phi\Sigma}=Sen^J\widecheck{\psi}\cdot\alpha_{\widecheck{\Sigma}}\cdot Sen^Ii^{-1}_{\phi\Sigma}$ it follows that $Sen^J\widecheck{\psi}(S_{\widecheck{\Sigma}})\subseteq S_{\widecheck{\Sigma'}}$.

Now, we prove that $S'$ is functorial. It is clear that $S'\langle1_\Sigma,1_M\rangle=\langle 1_{\widecheck{\Sigma}}, 1_{\widecheck{M}}\rangle=1_{S'\langle\Sigma,M\rangle}$ and, given a pair of arrows  
\[\langle((\phi\Sigma)_S,S_{\phi\Sigma}),(\beta_\Sigma M)_{S\phi\Sigma}\rangle\xrightarrow{\langle \psi_1,w\rangle}\langle((\phi(\Sigma'))_S,S_{\phi(\Sigma')}),(\beta_\Sigma' N)_{S\phi(\Sigma')}\rangle\]
\[and\]
\[\langle((\phi(\Sigma'))_S,S_{\phi(\Sigma')}),(\beta_\Sigma' N)_{S\phi(\Sigma')}\rangle\xrightarrow{\langle \psi_2,y\rangle}\langle((\phi(\Sigma''))_S,S_{\phi(\Sigma'')}),(\beta_\Sigma'' W)_{S\phi(\Sigma'')}\rangle\]
We have:
\[\begin{tikzcd}
(\phi\Sigma)_S \arrow[d, "\cong"'] \arrow[r, "\psi_1"]          & (\phi\Sigma')_S \arrow[r, "\psi_2"] \arrow[d, "\cong" description] & (\phi\Sigma'')_S \arrow[d, "\cong"] \\
\phi(\widecheck{\Sigma}) \arrow[r, "\phi(\widecheck{\psi_1})"', dashed] & \phi(\widecheck{\Sigma'}) \arrow[r, "\phi(\widecheck{\psi_2})"', dashed]   & \phi(\widecheck{\Sigma''})             
\end{tikzcd}\]

Notice that, by definition, $\phi(\widecheck{\psi_2\cdot\psi_1})$ is the unique arrow that makes the outer rectangle commute. It follows that 
$\phi(\widecheck{\psi_2\cdot\psi_1})=\phi(\widecheck{\psi_2})\cdot\phi(\widecheck{\psi_1})$ and so, by faithfulness,  $\widecheck{\psi_2\cdot\psi_1} = \widecheck{\psi_2}\cdot\widecheck{\psi_1}$.

Moreover, let $\bullet$ and $\circ$ stand for the composition of the second coordinate in, respectively, $(Mod^J)^\sharp$ and $(Mod^{\pres^J})^\sharp$. We then have:
\begin{align*}
        \widecheck{w}\circ\widecheck{y} &= Mod^J\widecheck{\psi_{1}}\beta^{-1}_{\widecheck{\Sigma'}}Mod^Ii_{\phi\Sigma}(w)\cdot\beta_{\widecheck{\Sigma}}^{-1}Mod^Ii_{\phi\Sigma}(y)\\
    &= \beta^{-1}_{\widecheck{\Sigma}}Mod^I\phi\widecheck{\psi_{1}}Mod^Ii_{\phi\Sigma'}(w)\cdot  \beta^{-1}_{\widecheck{\Sigma}}Mod^Ii_{\phi\Sigma}(y)\\
    &= \beta^{-1}_{\widecheck{\Sigma}} Mod^Ii_{\phi\Sigma}Mod^J\psi_1(w)\cdot \beta^{-1}_{\widecheck{\Sigma}} Mod^Ii_{\phi\Sigma}(y)\\
\widecheck{w\bullet y}    &= \beta^{-1}_{\widecheck{\Sigma}} Mod^Ii_{\phi\Sigma}(Mod^J\psi_1(w)\cdot y)
\end{align*}

We now have a functor $S':(Mod^J)^\sharp\to(Mod^{\pres^J})^\sharp$. Finally, let us prove that $S'$ indeed forms a skolemization.

First, notice that $i^{-1}_{\phi\Sigma}\cdot\tau_{\phi\Sigma}\in Sig^I(\phi\Sigma,\phi\widecheck{\Sigma})$. Define then ${\tau'_{\Sigma}}$ as the arrow in $Sig^J(\Sigma,\widecheck{\Sigma})$ satisfying $\phi(\widecheck{\tau})=i^{-1}_{\phi\Sigma}\cdot\tau$. Given some $M\in|Mod^J\Sigma|$ we have:

\begin{align*}
    \widecheck{M}\rest_{\widecheck{\tau}} &= Mod^J\widecheck{\tau}\cdot\beta^{-1}_{\widecheck{\Sigma}}Mod^Ii_{\phi\Sigma}((\beta_\Sigma(M))_{S\phi\Sigma})\\
    &= \beta^{-1}_\Sigma(Mod^I\phi\widecheck{\tau}Mod^Ii_{\phi\Sigma}((\beta_\Sigma(M))_{S\phi\Sigma}))\\
    &= \beta^{-1}_\Sigma(Mod^I\tau((\beta_\Sigma(M))_{S\phi\Sigma})\\
M   &= \beta^{-1}_\Sigma(\beta_\Sigma(M))
\end{align*}

Now given $\mathcal{I}_{\phi\Sigma}=\langle\mathcal{U},E \rangle$ we define $\mathcal{I'}_{\Sigma}=\langle \mathcal{U'},E' \rangle$ as:

\begin{itemize}
    \item For any object $i$ in $\mathcal{U}$, $\beta^{-1}_{\widecheck{\Sigma}}Mod^Ii_{\phi\Sigma}(i)$ is an object of $\mathcal{U'}$\\
          For any arrow $ a$ in $\mathcal{U}$, $\beta^{-1}_{\widecheck{\Sigma}}Mod^Ii_{\phi\Sigma}(a)$ is an arrow of $\mathcal{U'}$
    \item For any object $e$ in ${E}$, $\beta^{-1}_{\widecheck{\Sigma}}Mod^Ii_{\phi\Sigma}(e)$ is an object of ${E'}$\\
          For any arrow $ b$ in ${E}$, $\beta^{-1}_{\widecheck{\Sigma}}Mod^Ii_{\phi\Sigma}(b)$ is an arrow of ${E'}$
\end{itemize}

Routine calculations show $\mathcal{I'}_{\Sigma}$ is an inclusion system in $Mod^J\widecheck{\Sigma}$.

Finally, suppose that the $\widecheck{\Sigma}$-models $M'$ and $N'$ are skolemizations of, respectively, the $\Sigma$-models $M$ and $N$ and that $M'\xhookrightarrow{}N'$. Clearly then $(\beta_{\widecheck{\Sigma}}(M'))\rest_{i^{-1}_{\phi\Sigma}}\xhookrightarrow{}(\beta_{\widecheck{\Sigma}}(N'))\rest_{i^{-1}_{\phi\Sigma}}$.
Moreover, using structurality and the morphism compatibility condition we have that:

\[M'\models_{\widecheck{\Sigma}}S_{\widecheck{\Sigma}} \iff M'\models \alpha_{\widecheck{\Sigma}}(Sen^Ii^{-1}_{\phi\Sigma}(S_{\phi\Sigma})) \iff Mod^Ii^{-1}_{\phi\Sigma}\beta_{\widecheck{\Sigma}}(M')\models_{(\phi\Sigma)_S}S_{\phi\Sigma}\]

It follows then that 

\[((\beta_{\widecheck{\Sigma}}(M'))\rest_{i^{-1}_{\phi\Sigma}\cdot\tau})^{\star}=((\beta_{\widecheck{\Sigma}}(N'))\rest_{i^{-1}_{\phi\Sigma}\cdot\tau})^{\star}\]

Or equivalently,

\[((\beta_{\widecheck{\Sigma}}(M'))\rest_{i^{-1}_{\phi\widecheck{\tau}}})^{\star}=((\beta_{\widecheck{\Sigma}}(N'))\rest_{i^{-1}_{\phi\widecheck{\tau}}})^{\star}\]

By naturality,

\[(\beta_{\Sigma}(Mod^{I}\widecheck{\tau}(M')))^{\star}=(\beta_{\Sigma}(Mod^{I}\widecheck{\tau}(N')))^{\star}\]

Since $M'$ and $N'$ are skolemizations, we have that $M'\rest_{\widecheck{\tau}}=M$ and $N'\rest_{\widecheck{\tau}}=N$. Now notice that

\[M\models \alpha_\Sigma(\varphi) \iff \beta_\Sigma(M)\models \varphi \iff \beta_\Sigma(N)\models \varphi \iff N\models\alpha_\Sigma(\varphi) 
\]

As $\alpha_\Sigma$ is semantically surjective the result follows.\\
\qed

As an illustration of the previous theorem we present the following:

\begin{Ex}\label{MultialgebrasDLS}{\textbf{(Multialgebras have the Downward L\"owenheim-Skolem property)}}\\
     We now describe {\bf{MA}}---the institution of (unsorted) multialgebras\footnote{Here we consider a wide sense of  $n$-ary multioperation on a set $A$: this is just a function $F : A^n \to {\cal P}(A)$, allowing  $\emptyset$ in the range.}. As signatures we simply use (unsorted) first order signatures. The intuition here is that function symbols are to be interpreted as functions and relations as multioperations.\\
     Let us describe the syntax. The terms are built in a first order manner with the caveat that relation symbols can too be used to form terms, that is, functions are allowed to take relations as arguments and we can compose relations. For the formulas, we have two atoms: $t\succ t'$, interpreted as set inclusion, and $t\doteq t'$, interpreted as (deterministic) equality. The full set of formulas is built by using quantification and Boolean connectives, the sentences being the formulas without free variables. For the semantics we let the category of models of given signature be the category of multialgebras of that signature. A more detailed characterization of this institution can be found in \cite{Lamo}.\\
     We can now describe a morphism {\bf{MA}}$\xrightarrow{\langle \phi,\alpha,\beta \rangle}$ {\bf{FOL$^1$}}:
     \begin{itemize}
         \item We start by defining the functor
         \[\begin{tikzcd}
\phi:\sig^{{\bf{MA}}} \arrow[r]                                                  & \sig^{{\bf{FOL^1}}}                                              \\
{\langle(\mathcal{F}_i)_{i<\omega},(\mathcal{M}_i)_{i<\omega} \rangle} \arrow[d, "f"'{name=A}] \arrow[r, maps to, shorten <= 1.2mm,shorten >= 1.3mm] & {\langle (\mathcal{F}_i)_{i<\omega},(\mathcal{R}}_i)_{i<\omega}\rangle \arrow[d, "f"{name=B}] \\
{\langle (\mathcal{F'}_i)_{i<\omega},(\mathcal{M'}_i)_{i<\omega}\rangle} \arrow[r, maps to,shorten >= 0.5mm]               & {\langle (\mathcal{F'}_i)_{i<\omega},(\mathcal{R'}_i)_{i<\omega}\rangle}             \arrow[from=A,to=B,maps to,shorten <=17.5mm, shorten >= 17mm]
\end{tikzcd}\]
Where $\mathcal{R}_{i+1}\coloneqq\{r_m:m\in M_i\}$. It is easy to see that $\phi$ is well defined and fully faithful. Moreover, we have that the functor is essentially surjective.
         \item  Given $\Sigma\in|\sig^{{\bf{MA}}}|$ we define $\alpha_{\Sigma}:Sen^{{\bf{FOL^1}}}(\phi\Sigma)\to Sen^{{\bf{MA}}}(\Sigma)$ recursively:
         \begin{align*}
             \alpha_{\Sigma}(x_i)                  &= x_i\\
             \alpha_{\Sigma}(f(t_1\cdots t_n))     &= f(\alpha_{\Sigma}(t_1)\cdots\alpha_{\Sigma}(t_n))\\
             \alpha_{\Sigma}(t\approx t')          &= \alpha_{\Sigma}(t)\doteq \alpha_{\Sigma}(t')\\
             \alpha_{\Sigma}(r_m(t_1\cdots t_{n+1})) &= m(\alpha_{\Sigma}(t_1)\cdots \alpha_{\Sigma}(t_n))\succ t_{n+1}\\
         \end{align*}
         \vspace{-1cm}\[             \alpha(A\land B) = \alpha_{\Sigma}(A)\land\alpha_{\Sigma}(B);\quad\!\! \alpha_{\Sigma}(\neg A) = \neg\alpha_{\Sigma}(A);\quad\!\! \alpha_{\Sigma}(\exists x_i(A)) =\exists x_i (\alpha_{\Sigma}(A))\]
         Elementary induction shows that $\alpha$ is indeed a natural transformation.\\
         Notice that the set $\alpha_{\Sigma}[Sen^{{\bf{FOL^1}}}(\phi\Sigma)]$ consists of formulas built of terms where there is no composition with multioperations. The idea we use to show that $\alpha_\Sigma$ is semantically surjective is simple: suppose we have the formula $f(x_1\cdots m(y_1\cdots y_k)\cdots x_n)\doteq x_{n+1}$ where $m(y_1\cdots y_k)$ happens in the $j$-th place, we simply introduce a new variable and restrict its domain, i.e., we consider the formula  $\forall x_j(m(y_1\cdots y_k)\succ x_j\land f(x_1\cdots x_j\cdots x_n)) \doteq x_{n+1}$. Using a similar technique for inclusion\footnote{For example, if $f$ and $g$ are function symbols and $m$ is a multioperation, then the formula $f(m(x))\succ g(y)$ is equivalent to $\exists z ((m(x)\succ z )\land (f(z) \doteq g(y))) $ } and proceeding by induction on nested formulas the proof follows.\footnote{Note that the full proof would have to address equalities between multioperations and inclusions between functions. The former being equivalent to $\bot$ and the latter to an equality, for instance, $f(x)\succ g(y)$ and $f(x)\doteq g(y)$}
         \item Given some signature $\Sigma$ consider the functor
         \[\begin{tikzcd}
\beta_\Sigma:Mod^{{\bf{MA}}}(\Sigma) \arrow[r]                   & Mod^{{\bf{FOL^1}}}(\phi\Sigma)                 \\
{\langle W,(F_i)_{i<\omega},(M_i)_{i<\omega}\rangle} \arrow[r, maps to, shorten <= 0.9mm,shorten >= 0.7mm] \arrow[d, "h"'{name=A}] & {\langle W,(F_i)_{i<\omega},(R_i)_{i<\omega}\rangle} \arrow[d, "h"{name=B}] \\
{\langle W',(F'_i)_{i<\omega},(M'_i)_{i<\omega}\rangle} \arrow[r, maps to]              & {\langle W',(F'_i)_{i<\omega},(R'_i)_{i<\omega}\rangle}  
\arrow[from=A,to=B,mapsto, shorten >= 18.7mm, shorten <= 19.1mm]
\end{tikzcd}\]
        Where $r_m =\{x_1x_2\cdots x_{i+1}\in M^{i+1}:x_{i+1}\in m(x_1\cdots x_i)\} $ and ${R}_{i+1}\coloneqq\bigcup_{m\in M_i}r_m$. It is easy to see that $\beta_\Sigma$ is well defined and that $(\beta_\Sigma)_{\Sigma\in|\sig^{{\bf{MA}}}|}$ ensemble into a natural transformation. Furthermore simple arguments show that $\langle \phi,\alpha, \beta \rangle$ indeed forms an institution morphism.\\
        Finally, we define an inverse for $\beta_\Sigma$
        \[\begin{tikzcd}
Mod^{{\bf{MA}}}(\Sigma)                  & Mod^{{\bf{FOL^1}}}(\phi\Sigma):\beta_\Sigma^{-1} \arrow[l]                  \\
{\langle W,(F_i)_{i<\omega},(M_i)_{i<\omega}\rangle} \arrow[d, "h"'{name=A}] & {\langle W,(F_i)_{i<\omega},(R_i)_{i<\omega}\rangle} \arrow[d, "h"{name=B}] \arrow[l, maps to, shorten <= 0.9mm,shorten >= 0.7mm]\\
{\langle W',(F'_i)_{i<\omega},(M'_i)_{i<\omega}\rangle}             & {\langle W',(F'_i)_{i<\omega},(R'_i)_{i<\omega}\rangle} \arrow[l, maps to]  
\arrow[from=B,to=A,mapsto, shorten >= 18.7mm, shorten <= 19.1mm]
\end{tikzcd}\]
    Where $m_r(x_1\cdots x_i):=\{x_{i+1}\in W:r(x_1\cdots x_{i+1})\}$ and $M_i:=\bigcup_{r\in R_{i+1}}m_r$.
     \end{itemize}    
     This proves that {\bf{MA}} has a skolemization. Observe that the inclusion system of this skolemization is the standard one, that is, an inclusion simply means a subalgebra. Using this fact and a similar technique to skolem hulls one can now easily prove a downward L\"owenheim-Skolem result for multialgebras.
\end{Ex}

%\subsection{Adding constants}

%DANIEL: op fibrations....

\section{Final remarks and future works}

We finish the present work presenting some  perspectives of future developments.

%{\color{red}{repensar a ordem das observacoes abaixo, atlvez adicionar alguma delas no texto, eventaulmente abrindo-a como uma proposicao }}

%\begin{Obs}
%Seria legal pensar a ligacao das logicas proposicionais com a dos pares fitros via $\pi$-institucoes (transferencia de propriedades como Craig???)
%\end{Obs}

\begin{Obs}
The adjunctions obtained in Section 2 lead us to research about the relationship between the types of representations of propositional logics and their institutions and $\pi$-institution developed in Section 4:

\begin{enumerate}
    \item The result of these analyzes may provide us with a way to study metalogical properties of abstract propositional logics and their algebraic or categorical properties, for instance, the relation between Craig's interpolation in an abstract logics and the amalgamation properties of its algebraic or categorical semantic. In particular, it could be interesting examine the possibility of generalize the work in \cite{AMP2}, describing a Craig interpolation property for institutions associated to multialgebras: this is a natural (non-deterministic) matrix semantics for complex logics as the {\bf LFI's}, the logics of formal inconsistencies (see \cite{CFG}).
    
   % \item In the subsection 4.1 we have described   the $\pi$-institutions associated to categories of abstract propositional logics and some forms of translation morphisms,  as developed in \cite{MaPi1}. This naturally  lead us to search  an analogous ``model-theoretical" version of it that is different from the canonical one (i.e., that  obtained by applying the functor $G : \pic \rightarrow \inc$). This is achieved in section 4.2, based on the development made in the section 3.1 of \cite{MaPi3}: we   provide (another) \emph{institutions} for each   category of propositional logics, through the use of the notion of a {\em matrix} for a propositional logic. Since filter pairs and abstract semantic matrices constitute presentations of propositional logics,  it will be interesting to study possible connections between the  institutions associated to both concepts. 
    
    \item By a convenient modification of this matrix institution, is presented in section 3.2 of \cite{MaPi3} an institution for each ``equivalence class" of algebraizable logic: this furnished technical means  to  apply notions and results from the theory of institutions in the propositional logic setting  and to derive, from the introduction of the notion of ``Glivenko's context", a strong and general form of Glivenko's Theorem relating two ``well-behaved" logics.

\end{enumerate}

\end{Obs}

%\begin{Obs}
%{\color{red}{Caso de pares filtro como instituices na versao com multialgebras: diaacnescu}}

%The examination of the content mentioned in both the remarks above could lead naturally  to consider  new  categories of propositional logics and to a new notions of morphism of ($\pi$-)institutions.

%This work also open a way to investigate categorical properties of the categories of institutions and $\pi$-institutions with many kinds of morphisms in each of them.

%\end{Obs}

\begin{Obs}
Another interesting discussion $-$ already suggested in \cite{Diac2} $-$ which can be posed is how to repeat the whole discussion of Section 3 with a version of the Grothendieck construction for indexed $2$-categories in order to directly produce the $2$-category of institutions, as well as related $2$-categories of institution-like structures. The technical categorical devices necessary for developing this idea are presented in \cite{Bak2}, for example.
\end{Obs}

\begin{Obs}
    The borowing result presented in section \ref{Skolem} leads us to question which institutions have the skolemization property in a non-trivial way. Furthermore, in predicate logic skolemization is deeply related to the idea of indiscernibles, which leads the authors to question if an institution-independent formalization of this idea is possible. Another question is if whether skolemization of an institution $I$ implies the skolemization of $\pres^I$; if so, then in any skolemizable institution every theory would admit some expansion to a model-complete theory. 
\end{Obs}

%\bibliographystyle{alpha-ime}% citação bibliográfica alpha
%\bibliographystyle{plain}
%\bibliography{refFIM}

\begin{thebibliography}{MMMMMM}
%\addcontentsline{toc}{chapter}

\bibitem [{\bf AFLM}] {AFLM} P. Arndt, R. A. Freire, O. O. Luciano, and H. L. Mariano. {\em A global glance on categories in logic.} Logica Universalis, {\bf 1} (2007), 3--39.

\bibitem [{\bf AMP1}] {AMP1}   {P. Arndt, H.  L. Mariano and D. C. Pinto}.	{\em Finitary Filter Pairs and Propositional Logics},	South American Journal of Logic,
{\bf 4(2)} (2018),  257--280.

\bibitem [{\bf AMP2}] {AMP2}   {P. Arndt, H.  L. Mariano and D. C. Pinto}, {\em Horn filter pairs and Craig interpolation property}, in preparation.

\bibitem [{\bf AZ}] {AZ} {Avron A., Zamansky A.} {\em Non-Deterministic Semantics for Logical Systems.}, In: Gabbay D., Guenthner F. (eds) Handbook of Philosophical Logic. Handbook of Philosophical Logic, vol 16, 2005. 

\bibitem [{\bf Bak1}] {Bak1}   {I. Bakovic}, {\em Bicategorical Yoneda lemma}, preprint, https://www2.irb.hr/korisnici/ibakovic/yoneda.pdf.

\bibitem [{\bf Bak2}] {Bak2}   {I. Bakovic}, {\em Fibrations of bicategories}, preprint, https://www2.irb.hr/korisnici/ibakovic/groth2fib.pdf.

\bibitem [{\bf CCM}] {CCM} {Carnielli W., Coniglio M.E., Marcos J.}  {\em Logics of Formal Inconsistency.} In: Gabbay D., Guenthner F. (eds) Handbook of Philosophical Logic. Handbook of Philosophical Logic, vol 14. Springer, Dordrecht, 2007.


\bibitem [{\bf CFG}] {CFG} M. E. Coniglio, A. Figallo-Orellano,  A. C. Golzio, {\em  Non-deterministic algebraization of logics by
swap structures}, Logic Journal of the IGPL, to appear. First published online: November 29, 2018. DOI: 10.1093/jigpal/jzy072.




\bibitem[{\bf Diac1}] {Diac1} R. Diaconescu, {\em Grothendieck Institutions}, {Applied Categorical Structures {\bf 10}} (2002).

\bibitem[{\bf Diac2}] {Diac2} R. Diaconescu. {\em Institution-independent Model Theory}, {Birkhauser Basel - Boston - Berlin}, 2008.

\bibitem [{\bf FC}] {FC} V.L. Fernandez and M.E. Coniglio. {\em Fibring algebraizable consequence system.} Proceedings of
CombLog 04 - Workshop on Combination of Logics: Theory and Application (2004),  93--98.

\bibitem [{\bf FS}] {FS} J. Fiadeiro and A. Sernadas. {\em Structuring theories on consequence}, D. Sannella and
A. Tarlecki (eds.), Recent Trends in Data Type Specification, Lecture Notes in Comput. Sci. {\bf 332} (1988), 44--72.

\bibitem [{\bf GB}] {GB} J. A. Goguen and R. M. Burstall. {\em Institutions: abstract model theory for specification and programming}, Journal of the ACM (JACM), {\bf 39(1)} (1992),  95--146.

\bibitem[{\bf Jo}] {Jo} P. Johnstone. {\em Sketches of an elephant: A topos theory compendium}, { Oxford University Press}, 2002.

\bibitem [{\bf Lamo}] {Lamo} Y. Lamo. {\em The Institution of Multialgebras - a general framework for algebraic software development.}, PhD thesis, University of Bergen, (2002).

\bibitem [{\bf MaMe}] {MaMe}  H. L. Mariano and C. A. Mendes. {\em Towards a good notion of categories of logics.} arXiv preprint,
http://arxiv.org/abs/1404.3780, (2014).


\bibitem [{\bf MaPi1}] {MaPi1}   {H.  L. Mariano and D. C. Pinto}.	{\em Remarks on Propositional Logics and the
Categorial Relationship Between Institutions
and $\pi$-Institutions},	South American Journal of Logic,
{\bf 3(1)} (2017),  111--121.



	
\bibitem [{\bf MaPi2}] {MaPi2}   {H.  L. Mariano and D. C. Pinto}. {\em Algebraizable Logics and a functorial encoding of its morphisms}, Logic Journal of the IGPL {\bf 25(4)} (2017), 524--561.


\bibitem [{\bf MaPi3}] {MaPi3} H. L. Mariano and D. C. Pinto. {\em An abstract approach to Glivenko's theorem}, arXiv preprint (2016), https://arxiv.org/pdf/1612.03410.pdf.


%\bibitem[{\bf MaPi4}] {MaPi4} P. Arndt, H. L. Mariano and D. P. Pinto. {\em Finitary Filter Pairs and Propositional Logics}, South American Journal of Logic, {\bf 4(2)} (2018), 257--280.

\bibitem [{\bf nLab}] {nLab} nLab, https://ncatlab.org/nlab/show/HomePage.



\bibitem [{\bf Vou}] {Vou} G. Voutsadakis. {\em Categorical abstract algebraic logic: algebraizable institutions}, Applied Categorical Structures, {\bf 10} (2002), 531--568.
 
\end{thebibliography}

\end{document}